\documentclass[reqno, 11pt]{amsart}

\usepackage{amsfonts, amsthm, amsmath, amssymb}
\usepackage{hyperref}
\hypersetup{colorlinks=false}

\setcounter{tocdepth}{1}

\usepackage{amsmath,amsthm,amsfonts,amssymb,bm,graphicx,xparse}

\usepackage[margin=3.5cm]{geometry}

\makeatletter
\NewDocumentCommand{\sump}{e{_}}
 {%
  \DOTSB
  \mathop{\IfNoValueTF{#1}{\sump@{}}{\sump@{#1}}}%
  \nolimits
 }
\newcommand{\sump@}[1]{\mathpalette\sump@@{#1}}
\newcommand{\sump@@}[2]{%
  \ifx#1\displaystyle
    {\sump@display{#2}}%
  \else
    \sum@\nolimits^*_{#2}%
  \fi
}
\newcommand{\sump@display}[1]{%
  \sbox\z@{$\m@th\displaystyle\sum@\nolimits'$}%
  \sbox\tw@{$\m@th\displaystyle\sum@\limits_{#1}$}%
  \sbox\@tempboxa{$\m@th\displaystyle'$}
  \mathop{\sum@\nolimits^* \kern-\wd\@tempboxa}\limits_{#1}%
  \ifdim\wd\z@>\wd\tw@
    \kern\dimexpr\wd\z@-\wd\tw@\relax
  \fi
}
\makeatother

\RequirePackage{mathrsfs} \let\mathcal\mathscr
  
\renewcommand{\leq}{\leqslant}
\renewcommand{\le}{\leqslant}
\renewcommand{\geq}{\geqslant}
\renewcommand{\ge}{\geqslant}

\usepackage{comment}
\usepackage{graphics}
\usepackage{aliascnt}

\usepackage{enumerate}
\usepackage{amsmath}
\usepackage{amsfonts}
\usepackage{amssymb}
\usepackage{amsthm}
\usepackage{comment}
\usepackage{mathtools}

\usepackage{amsmath,amsthm,amsfonts,amssymb,bm,graphicx, float, subfigure}

\usepackage{enumitem}

\makeatletter
\@namedef{subjclassname@2020}{\textup{2020} Mathematics Subject Classification}
\makeatother

\newtheorem{theorem}{Theorem}[section]
\newtheorem{Theorem}[theorem]{Theorem}

\newtheorem{lemma}[theorem]{Lemma}

\newtheorem{prop}[theorem]{Proposition}
\newtheorem{Prop}[theorem]{Proposition}

\theoremstyle{definition}

\newtheorem{remark}[theorem]{Remark}

\theoremstyle{definition}

\newtheorem{Definition}[theorem]{Definition}
\newtheorem{definition}[theorem]{Definition}

\renewcommand{\geq}{\geqslant}
\renewcommand{\leq}{\leqslant}
\renewcommand{\mod}{\mathrm{mod}\,}

\newcommand{\norm}[1]{\left\lVert#1\right\rVert}

\newcommand{\lcm}{\mathrm{lcm}}

\newcommand{\eps}{\varepsilon}

\newcommand{\FF}{\mathbb{F}}
\newcommand{\OO}{\mathcal{O}}
\newcommand{\RR}{\mathbb{R}}
\newcommand{\BB}{\mathcal{B}}
\newcommand{\AAA}{\mathbb{A}}

\newcommand{\mm}{\mathbf{m}}

\newcommand{\cc}{\mathbf{c}}
\newcommand{\ee}{\mathbf{e}}
\newcommand{\nn}{\mathbf{n}}
\newcommand{\uu}{\mathbf{u}}
\newcommand{\sss}{\mathbf{s}}
\newcommand{\xx}{\mathbf{x}}
\newcommand{\pa}{\partial}
\newcommand{\yy}{\mathbf{y}}

\newcommand{\CC}{\mathbb{C}}
\newcommand{\CCC}{\mathcal{C}}
\newcommand{\QQ}{\mathbb{Q}}
\newcommand{\ZZ}{\mathbb{Z}}
\newcommand{\NN}{\mathbb{N}}
\newcommand{\NNN}{\mathbf{N}}

\numberwithin{equation}{section}

\makeatletter
\def\Ddots{\mathinner{\mkern1mu\raise\p@
\vbox{\kern7\p@\hbox{.}}\mkern2mu
\raise4\p@\hbox{.}\mkern2mu\raise7\p@\hbox{.}\mkern1mu}}
\makeatother
% Widecheck
\makeatletter
\DeclareRobustCommand\widecheck[1]{{\mathpalette\@widecheck{#1}}}
\def\@widecheck#1#2{%
    \setbox\z@\hbox{\m@th$#1#2$}%
    \setbox\tw@\hbox{\m@th$#1%
       \widehat{%
          \vrule\@width\z@\@height\ht\z@
          \vrule\@height\z@\@width\wd\z@}$}%
    \dp\tw@-\ht\z@
    \@tempdima\ht\z@ \advance\@tempdima2\ht\tw@ \divide\@tempdima\thr@@
    \setbox\tw@\hbox{%
       \raise\@tempdima\hbox{\scalebox{1}[-1]{\lower\@tempdima\box
\tw@}}}%
    {\ooalign{\box\tw@ \cr \box\z@}}}
\makeatother

\begin{document}

\title[Liouville, von Mangoldt and norm forms at random binary forms]{Liouville function, von Mangoldt function and norm forms at random binary forms}

\author{Yijie Diao}
\address{IST Austria\\
Am Campus 1\\
3400 Klosterneuburg\\
Austria}
\email{yijie.diao@ist.ac.at}

%\keywords{exceptional eigenvalues, Kloosterman sums, Kuznetsov formula, Ramanujan conjecture}

\begin{abstract}
We analyze the average behavior of various arithmetic functions at the values of degree $d$ binary forms ordered by height, with probability $1$. This approach yields averaged versions of the Chowla conjecture and the Bateman--Horn conjecture for random binary forms. Furthermore, we show that the rational Hasse principle holds for almost all Ch\^atelet varieties defined by a fixed norm form of degree $e$ and by varying binary forms of fixed degree $d$, provided $e$ divides $d$. This proves an average version of a conjecture of Colliot-Th\'el\`ene.

\end{abstract}

\subjclass[2020]{11N32  (11N37, 11D57, 11G35)}

\maketitle

\tableofcontents

\section{Introduction}

Browning, Sofos, and Ter\"av\"ainen \cite{BST} recently studied the average behavior of arithmetic functions at random polynomials. They proved averaged versions of the Bateman--Horn conjecture, the polynomial Chowla conjecture, and the integral Hasse principle for norm form equations.
The fundamental tool in their work links  equidistribution of an arithmetic function in arithmetic progressions to its values over almost all polynomials. In this paper, we extend this framework to random binary forms and obtain analogous results. Notably, we prove the rational Hasse principle for $100\%$ of norm form equations.
To rigorously describe the proportion of a  certain class of binary forms, we will first define \emph{combinatorial cube}.
\begin{Definition}[Combinatorial cube]
	Let $d \geq 1$ and $H \geq 1$. For any index set $\mathcal{E} \subset \{0,\dots,d\}$ and any integers $\alpha_e \in [-H, H]$ for $e \in \mathcal{E}$, we call the set
	$$\mathcal{C} = \mathcal{C}(H) = \left\{ (c_0, \dots, c_d) \in (\ZZ \cap [-H, H])^{d+1}: c_e = \alpha_e, \forall \, e \in \mathcal{E}  \right\}$$
	a \emph{combinatorial cube} of side length $H$ and dimension $d + 1 - \# \mathcal{E}$. We call $c_d$ the \emph{constant coefficient} of an element $(c_0, \dots, c_d) \in \mathcal{C}$. Given a combinatorial cube $\mathcal{C}$, we say a binary form 
	$$\sum_{i=0}^d c_i x^i y^{d-i}$$
	\emph{has its coefficients in} $\mathcal{C}$ if $(c_0, \dots, c_d) \in \mathcal{C}$.
\end{Definition}

\subsection{Liouville function and von Mangoldt function on random binary forms}
Let $\lambda$ denote the Liouville function, which we extend to all integers by setting $\lambda(0) = 0$ and $\lambda(-n) = \lambda(n)$ for $n > 0$. The polynomial Chowla conjecture \cite{Chowla} states that, for any  $f \in \mathbb{Z}[t]$ not of the form  $cg(t)^2$  with $c \in \mathbb{R}$  and $g \in \mathbb{R}[t]$ , we have
$$\sum_{n \leq x} \lambda(f(n)) = o(x).$$
Recent progress has been made for certain special classes of polynomials \cite{Tao2016, TT-ANT, TT-Duke}. In \cite[Theorem 1.5]{BST}, Browning, Sofos, and Ter\"av\"ainen proved the polynomial Chowla conjecture for almost all polynomials, improving on a qualitative result obtained by Ter\"av\"ainen \cite[Theorem 2.11]{Joni}. 

Here, instead of polynomials, we investigate the behavior of the Liouville function at random binary forms with integer coefficients.

\begin{theorem}\label{thm-Chowla}
	Let $d \geq 1, A \geq 1$ and $0 < c < 5/(19d)$ be fixed. There exists a constant $H_0(d,A,c)$ such that the following holds for any $H \geq H_0(d,A,c)$. Let $\mathcal{C} \subset \ZZ^{d+1}$ be a combinatorial cube of side length $H$ and dimension at least $2$. Then, for all but at most $\#\CCC / (\log H)^A$ degree $d$ binary forms $g \in \ZZ[s,t]$ with coefficients in $\CCC$, we have
	$$\sup_{x \in [H^c, 2H^c]} \, \frac{1}{x^2} \ \Bigg| \sum_{u, v \leq x} \lambda(g(u,v)) \Bigg| \leq (\log H)^{-A}.$$
\end{theorem}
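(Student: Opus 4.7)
My approach is a second moment argument. Since $\sum_{u,v\le x}\lambda(g(u,v))$ is a step function of $x$ changing only at integer values, the supremum over $x\in[H^c,2H^c]$ may be taken over the $O(H^c)$ integer points in this range, costing only a small polynomial in $H$ which is comfortably absorbed by the logarithmic saving we target. It therefore suffices to prove, for each integer $x\in[H^c,2H^c]$,
$$V(x):=\sum_{g\in\mathcal{C}}\Bigl(\frac{1}{x^2}\sum_{u,v\le x}\lambda(g(u,v))\Bigr)^{2}\ \ll\ \#\mathcal{C}\,(\log H)^{-3A},$$
after which Chebyshev's inequality produces the required exceptional set; a union bound over $x$ then concludes.

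\textbf{Expansion and reduction to $\lambda$-correlations in arithmetic progressions.} Expanding the square,
$$V(x)=\frac{1}{x^4}\sum_{u_1,v_1,u_2,v_2\le x}\ \sum_{(c_0,\dots,c_d)\in\mathcal{C}}\lambda(g(u_1,v_1))\,\lambda(g(u_2,v_2)),$$
with $g(u,v)=\sum_i c_i u^i v^{d-i}$. Writing $I=\{0,\dots,d\}\setminus\mathcal{E}$ for the set of free-coefficient indices, we have $|I|\ge 2$ by hypothesis. Pick two indices $j,k\in I$: each of $g(u_1,v_1)$ and $g(u_2,v_2)$ is affine-linear in $(c_j,c_k)$ via the matrix
$$M=\begin{pmatrix}u_1^j v_1^{d-j} & u_1^k v_1^{d-k}\\ u_2^j v_2^{d-j} & u_2^k v_2^{d-k}\end{pmatrix}.$$
For quadruples with $\det M\neq 0$, the map $(c_j,c_k)\mapsto(g(u_1,v_1),g(u_2,v_2))$ is an affine lattice embedding; summing first over the remaining free coefficients $c_i$ for $i\in I\setminus\{j,k\}$ (these merely shift the two arguments through arithmetic progressions of moduli $u_r^i v_r^{d-i}$) reduces the inner sum to a two-point correlation of $\lambda$ on a pair of short intervals intersected with arithmetic progressions, with moduli bounded by $x^d$ and lengths of order $H$. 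Cancellation in such correlations should follow from a Matom\"aki--Radziwi\l\l\ type equidistribution theorem for $\lambda$ on arithmetic progressions, with power-of-log savings.

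\textbf{Main obstacle and degenerate quadruples.} The key technical input, and the main obstacle, is securing the two-point $\lambda$-correlation bound with the appropriate quantitative savings across the full relevant range of scales. The constraint $c<5/(19d)$ is consistent with the exponents appearing in Matom\"aki--Radziwi\l\l-type theorems for $\lambda$: roughly, the values $g(u,v)$ have size $\le H x^d\le H^{1+5/19}$, while the arithmetic progression obtained by varying one free coefficient has length $\sim H$ and modulus $\le x^d\le H^{5/19}$, which is just within the short-interval/AP regime accessible to these methods. Degenerate quadruples $(u_1,v_1,u_2,v_2)$—those for which every $2\times 2$ minor $\det M$ vanishes, equivalently $(u_r^i v_r^{d-i})_{i\in I}$ are proportional for $r=1,2$—lie on a subvariety of codimension $\ge 1$ in $\{1,\dots,x\}^4$, so they number $O(x^3)$; bounding the inner sum trivially by $\#\mathcal{C}$ yields contribution $\ll \#\mathcal{C}/x\ll \#\mathcal{C} H^{-c}$ to $V(x)$, which is acceptable. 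Combining the generic estimate with the degenerate bound and applying Chebyshev completes the proof.
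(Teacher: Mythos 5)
Your broad strategy --- a variance bound plus Chebyshev, a non-degeneracy argument for the coefficient-to-value map, and a Matom\"aki--Radziwi\l\l-type input for $\lambda$ --- agrees in outline with the paper's, but there are two genuine gaps.

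First, the treatment of the supremum is wrong: a union bound over the $O(H^c)$ integer points of $[H^c,2H^c]$ \emph{cannot} be absorbed by a logarithmic saving. If for each fixed $x$ the exceptional set has size $\ll\#\mathcal{C}(\log H)^{-3A}$, a naive union over $H^c$ values of $x$ gives $\ll H^c\,\#\mathcal{C}(\log H)^{-3A}$, which grows polynomially in $H$ and is useless. What is needed is the observation that $x\mapsto x^{-2}\sum_{u,v\le x}\lambda(g(u,v))$ changes by $O(1/x)$ per unit step, so it suffices to control the variance at $O((\log H)^{A})$ well-spaced checkpoints; the resulting union bound costs only an extra factor of $(\log H)^{A}$, which a $(\log H)^{-3A}$ saving does absorb.

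Second, the reduction to a ``two-point $\lambda$-correlation'' misidentifies the mechanism and skips the preparatory reductions it requires. After opening the square, the paper (via its Theorem \ref{prop_general}) does \emph{not} appeal to two-point correlations: for each good quadruple it first fixes the value $g(u_2,v_2)$ and controls its multiplicity by a divisor bound $\tau_B$, and then lets the full family of free coefficients $(a,b)=(c_k,c_l)$ run so that $g(u_1,v_1)$ traverses a single interval inside an arithmetic progression of modulus $\asymp x^{2(l-k)}$. Only one-point equidistribution of $\lambda$ in progressions (Proposition \ref{prop_Liouville}) is then used. This decoupling needs the hypothesis $\gcd(v,q)\le x_1$ in that equidistribution input, which is not automatic: one must pre-sieve the quadruples so that $\gcd(g(u_1,v_1),u_1^kv_1^{d-l})$ is small (the set $\mathcal{M}_A$ with its anatomical gcd constraints), and this in turn forces the preliminary reduction to the case $c_0c_d\ne 0$. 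When an extreme coefficient is fixed to be $0$, the form factors through a power of $u$ or $v$, and the paper absorbs that power into the twist $\alpha_{m,n}=\lambda(m)^i\lambda(n)^j$ (together with a shift of $(a,b)$ by $1$) before the transference theorem can be applied. Neither the gcd pre-sieving nor the $\alpha_{m,n}$ twist appears in your sketch, and the degenerate set you discard (``$\det M=0$'') is strictly smaller than the set of bad quadruples that actually has to be removed.
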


Let $\Lambda$ denote the von Mangoldt function, which we extend to all integers by setting $\Lambda(0) = 0$ and $\Lambda(-n) = \Lambda(n)$ for $n > 0$.
The Bateman--Horn conjecture in a single variable remains largely unresolved, with the sole exception being the case of linear polynomials, which reduces to Dirichlet's theorem on primes in arithmetic progressions. Consequently, much of the progress has been directed toward addressing analogous problems for polynomials in multiple variables. For $n = 2$, significant breakthroughs include Iwaniec's work~\cite{Iwa-1} on quadratic polynomials, Friedlander and Iwaniec's results~\cite{Fri-Iwa-1} for $x_1^2 + x_2^4$, Heath-Brown's work~\cite{HB2} on the cubic form $x_1^3 + 2x_2^3$, Heath-Brown and Moroz's results~\cite{HB-Mo-1} on binary cubic forms, and the recent contribution by Heath-Brown and Li~\cite{HB-Li-1} concerning $x_1^2 + x_2^4$ with $x_2$ prime. Maynard ~\cite{May} generalized the results of \cite{Fri-Iwa-1} and \cite{HB2} to incomplete norm forms.

We prove the following averaged form of the Bateman--Horn conjecture on the behavior of the von Mangoldt function on binary forms.

\begin{theorem}\label{thm-BH}
	Fix $d, A, r \geq 1$ and $0 < c < 5/(19d)$. There exists a constant $H_0(d, A, c, r)$ such that the following holds for any $H \geq H_0(d, A, c)$. Let $\mathcal{C} \subset \mathbb{Z}^{d+1}$ be a combinatorial cube of side length $H$ and dimension at least $2$, such that the constant coefficient is not fixed to be $0$. Then, for all but at most $(\#\mathcal{C})^r / (\log H)^A$ $r$-tuples of binary forms $g_1, \dots, g_r \in \mathbb{Z}[s, t]$ having degrees $\leq d$ and coefficients in $\mathcal{C}$, we have
\[
\sup_{x \in [H^c, 2H^c]} \left| \frac{1}{x^2} \sum_{m, n \leq x} \Lambda(g_1(m, n)) \cdots \Lambda(g_r(m, n)) - \mathfrak{S}_{g_1, \dots, g_r}(x) \right| \leq (\log H)^{-A},
\]
where
\[
\mathfrak{S}_{g_1, \dots, g_r}(x) = \prod_{p \leq \exp(\sqrt{\log x})} \left( 1 - \frac{1}{p} \right)^{-r} \left( 1 - \frac{\#\{(u, v) \in \mathbb{F}_p^2 : g_1(u, v) \cdots g_r(u, v) = 0 \}}{p^2} \right).
\]
\end{theorem}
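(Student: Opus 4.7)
The plan is to adapt the second moment framework developed in \cite{BST} (and used for Theorem~\ref{thm-Chowla} of this paper) from univariate polynomials to binary forms, the bilinear structure in $(m,n)$ being the main novelty. Writing
\[
T(g_1,\dots,g_r;x) := \frac{1}{x^2} \sum_{m,n \leq x} \Lambda(g_1(m,n)) \cdots \Lambda(g_r(m,n))
\]
and denoting the singular series by $\mathfrak{S}(x)$, I would establish the variance bound
\[
\frac{1}{(\#\mathcal{C})^r} \sum_{(g_i) \in \mathcal{C}^r} \bigl| T(g_1,\dots,g_r;x) - \mathfrak{S}(x) \bigr|^2 \ll (\log H)^{-3A}
\]
for each fixed $x \in [H^c, 2H^c]$, and then deduce the full theorem via Chebyshev's inequality together with a union bound over a $(\log H)^{-A}$-net in the $x$-interval, using that both $T$ and $\mathfrak{S}$ vary slowly in $x$.

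Swapping the outer $g$-sum with the $(m,n)$-sum and expanding the square reduces everything to the two basic correlations
\[
\frac{1}{\#\mathcal{C}} \sum_{g \in \mathcal{C}} \Lambda(g(\mathbf{p})) \Lambda(g(\mathbf{q})) \qquad \text{and} \qquad \frac{1}{\#\mathcal{C}} \sum_{g \in \mathcal{C}} \Lambda(g(\mathbf{p})),
\]
for pairs $\mathbf{p}, \mathbf{q} \in \{1,\dots,\lfloor x\rfloor\}^2$; the $r$-fold structure multiplies out since the $g_i$ vary independently. Writing $g = \sum_i c_i s^i t^{d-i}$ with free coefficients indexed by $I \subset \{0,\dots,d\}$, the joint value $(g(\mathbf{p}),g(\mathbf{q}))$ is an affine image of $[-H,H]^{|I|}$ under the $2 \times |I|$ monomial matrix whose columns are $(m^i n^{d-i}, (m')^i (n')^{d-i})$, $i \in I$. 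For generic $(\mathbf{p},\mathbf{q})$ this matrix has rank two, and after a routine lattice reduction the pair $(g(\mathbf{p}),g(\mathbf{q}))$ equidistributes over an affine sublattice of $\ZZ^2$ whose covolume is at most $H^{cd}$ up to polynomial factors. Feeding AP-equidistribution of $\Lambda$ (Siegel--Walfisz, possibly averaged à la Bombieri--Vinogradov across the lattice) into each correlation then gives the expected local density times $\#\mathcal{C}$, with $(\log H)^{-A}$ saving.

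Exceptional pairs split into two types. Projectively proportional pairs $\mathbf{p} \parallel \mathbf{q}$ are only $O(x^3)$ of the $x^4$ total, handled by the trivial bound $\Lambda \leq \log$. Pairs where $g(\mathbf{p})$ or $g(\mathbf{q})$ is forced to vanish on a large sub-box of $\mathcal{C}$ are excluded by the hypothesis that the constant coefficient $c_d$ is not fixed to zero: otherwise $t$ would divide every $g \in \mathcal{C}$ and $g(m,n) = n \cdot h(m,n)$ would be composite for all but very few $n$, collapsing the problem to a degenerate regime. Summing the resulting local densities over primes $p \leq \exp(\sqrt{\log x})$ via the Chinese remainder theorem reconstructs the Euler product defining $\mathfrak{S}_{g_1,\dots,g_r}(x)$, with a fundamental-lemma-type estimate capping the tail of larger primes.

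The main obstacle is the joint two-variable Siegel--Walfisz step: cancellation in $\sum_{g \in \mathcal{C}} \Lambda(g(\mathbf{p})) \Lambda(g(\mathbf{q}))$ down to a $(\log H)^{-A}$ error, uniform across almost all $(\mathbf{p},\mathbf{q})$, must interact correctly with the $r$-fold product structure and with the local densities making up $\mathfrak{S}$. The explicit threshold $c < 5/(19d)$ is tuned so that $g(m,n) \sim x^d \leq H^{cd}$ is small enough against the averaging length $H$ to yield polylogarithmic error, while $x$ remains large enough for the singular series to be analytically meaningful; the numerology balances the strongest available AP-equidistribution input against the count of exceptional pairs appearing in the $L^2$ expansion. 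Beyond this arithmetic input, the $r$-fold combinatorics proceed exactly as in \cite{BST}, the new feature being the genuinely two-dimensional parameterization by $(m,n)$ rather than a single integer variable.
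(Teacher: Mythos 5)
Your overall strategy — a second-moment/variance bound over the combinatorial cube, followed by Chebyshev — matches the paper's framework, and your observation that the $r$-fold structure factors independently over the $g_i$ is correct and mirrors the paper's use of the induction argument from \cite{BST}. However, the arithmetic engine you propose to power the argument is the wrong one, and this is a genuine gap.

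You propose to feed in ``AP-equidistribution of $\Lambda$ (Siegel--Walfisz, possibly averaged à la Bombieri--Vinogradov across the lattice).'' Neither suffices here. Siegel--Walfisz only handles moduli $q \ll (\log x)^{O(1)}$, whereas after opening the square the effective moduli are as large as $x^{c_0}$ with $c_0 = 5/24$ (where $x$ now means $\sim Hx^d$), i.e.\ a fixed positive power of $H$. Bombieri--Vinogradov gives equidistribution on average over moduli up to $x^{1/2-\varepsilon}$, but you need a \emph{pointwise} statement valid for all moduli $q$ outside a small exceptional set $\mathcal{Q}$, and you also need short-interval information (the intervals here have length $\geq |I| > H^{1-\varepsilon}x^{l-k}$ inside $[-2Hx^d, 2Hx^d]$, a vanishing fraction of the full range). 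The input the paper actually uses is Proposition~\ref{prop_Lambda}, an MRT-type result giving equidistribution of $\Lambda$ in arithmetic progressions and short intervals with modulus up to $x^{5/24}$, outside a sparse set of exceptional prime powers satisfying $\sum_{q \in \mathcal{Q}} q^{-\varepsilon} \ll (\log x)^{-30A/\varepsilon}$. This exceptional set (corresponding morally to possible Siegel zeros) must then be handled separately in the second-moment computation — that is the content of the split $\mathcal{N}$ versus $\mathcal{N}^c$ in Theorem~\ref{prop_general} — and your proposal has no mechanism for this. The threshold $c < 5/(19d)$ is not ``tuned'' by balancing against exceptional pair counts in the way you describe; it is forced by the substitution $x^d \leq (2Hx^d)^{c_0}$, i.e.\ $5/19 = c_0/(1-c_0)$ with $c_0 = 5/24$ the MRT exponent.

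A second, smaller issue: extracting the singular series $\mathfrak{S}_{g_1,\dots,g_r}(x)$ is cleaner than your proposal suggests. Rather than attempting to ``reconstruct the Euler product'' from local densities at the end, the paper subtracts the $W$-tricked Cramér model $\Lambda_w(n) = \frac{W}{\varphi(W)}\mathbf{1}_{\gcd(n,W)=1}$ at the outset; Lemma~\ref{mod-Cra} shows deterministically (via the fundamental lemma of sieve theory, applied to every tuple) that the $\Lambda_w$-average equals $\mathfrak{S}_{g_1,\dots,g_r}(x) + O((\log x)^{-A})$, so the variance computation only needs to control the oscillating function $F = (\Lambda - \Lambda_w)/\log x$, which is bounded and fits directly into the hypotheses of Theorem~\ref{prop_general}. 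Your route would require verifying that the local densities coming out of your lattice-reduction step assemble correctly into the Euler product, which is feasible but messier and is not needed.

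Finally, you do not need a union bound over a $(\log H)^{-A}$-net: since $2H^c/H^c = 2$, the interval $[H^c, 2H^c]$ is already of the dyadic form $[x/2, x]$, and the supremum over $x' \in [x/2,x]$ is built into the statement of Theorem~\ref{prop_general}.
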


\begin{remark}
When the number of variables $n$ is sufficiently large with respect to the degree $d$ of the polynomial, the Hardy--Littlewood circle method becomes particularly useful.
	Destagnol and Sofos showed in \cite{D-S-1} when $n > 2^{d-1}(d-1)$, the Bateman--Horn conjecture is true for non-singular forms in $n$ variables. In \cite{D-S-2}, they showed that when $n > 2^d(d-1)$ the $n$-form Chowla conjecture is true.
\end{remark}

\subsection{Rational Hasse principle for random Ch\^atelet varieties}

Let $K/\QQ$ be a finite extension of degree $e \geq 2$ and fix a $\ZZ$-basis $\{\omega_1, \dots, \omega_e\}$ for $\OO_K$. We define the norm form associated to $K$ by
\begin{eqnarray*}
	\mathbf{N}_K(\mathbf{y}) = \mathbf{N}_K(y_1, \dots, y_e) = \mathrm{Norm}_{K/\QQ}(y_1 \omega_1 + \dots + y_e\omega_e). 
\end{eqnarray*}
Let $f(t)$ be a polynomial with integer coefficients.
The {\it Ch\^atelet variety} $X_{K, f}$ is the affine variety $X \subset \AAA_\QQ^{e + 1}$ defined by the equation
\begin{eqnarray}\label{norm-f}
	\mathbf{N}_K(\mathbf{y}) = f(t) \neq 0.
\end{eqnarray}
It is equipped with a dominant morphism $\pi: X \rightarrow \AAA_\QQ^1$.

Let \( X^c \) be a smooth, projective model associated \( X \). It is well known that such models may fail to satisfy the Hasse principle or weak approximation. For instance, as observed by Coray (see \cite[Eq.~(8.2)]{CT-S}), one may take \( f(t) = t(t - 1) \) and consider the cubic field extension \( K = \mathbb{Q}(\theta) \), where \( \theta \) is a root of the irreducible polynomial \( y^3 - 7y^2 + 14y - 7 \). In this setting, the rational points \( X^c(\mathbb{Q}) \) are not dense in the set of 7-adic points \( X^c(\mathbb{Q}_7) \). Nevertheless, a conjecture of Colliot-Th\'el\`ene \cite{CT1} proposes that all such failures of Hasse principle for \( X^c \) can be explained via the Brauer--Manin obstruction. Colliot-Th\'el\`ene's conjecture implies that given a natural ordering of the coefficients of $f$, the Hasse principle holds with probability $1$. Browning and Matthiesen \cite{BL} proved Colliot-Th\'el\`ene's conjecture  when $f(t)$ is a product of linear polynomials all defined over $\QQ$, that the Brauer--Manin obstruction is the only obstruction to the Hasse principle and weak approximation.

Colliot-Th\'el\`ene's conjecture extends to more general arithmetic situations, where the base field is a number field \( k \). Progress has been made in verifying this conjecture in various settings, often under specific hypotheses on the extension \( K/k \) and the structure of \( f(t) \). For example, the case of Ch\^atelet surfaces, where \( [K : k] = 2 \) and \( \deg(f(t)) \leq 4 \), has been proved through the work of Colliot-Th\'el\`ene, Sansuc, and Swinnerton-Dyer \cite{CT-D-SD-1, CT-D-SD-2}. Singular cubic hypersurfaces with \( [K : k] = 3 \) and \( \deg(f(t)) \leq 3 \) have also been treated in work by Colliot-Th\'el\`ene and Salberger \cite{CT-S}.
Further results include situations where the extension \( K/k \) is arbitrary and \( f(t) \) splits over \( k \) into no more than two distinct roots (see \cite{CT-H-S, HB-Sko, Sch-Sko, Jones}), as well as the case of an irreducible quadratic \( f(t) \) defined over \( \mathbb{Q} \) with \( K/\mathbb{Q} \) arbitrary (see \cite{BHB, DSW}). V\'arilly-Alvarado and Viray \cite{VA-V} studied when $K/k$ is a cyclic extension of prime degree $p$ and $P(x)$ is a separable polynomial of degree $2p$. Moreover, assuming Schinzel's hypothesis, Colliot-Th\'el\`ene, Skorobogatov, and Swinnerton-Dyer \cite{CT-Sko-SD} have verified Colliot-Th\'el\`ene's conjecture for cyclic extensions \( K/k \) and general \( f(t) \).

In \cite[Theorem 1.6]{BST}, Browning, Sofos, and Ter\"av\"ainen proved that the Ch\^atelet varieties $X_{K, f}$ satisfy the integral Hasse principle for 100\% of polynomials $f \in \ZZ[t]$ of degree $d$, with positive leading coefficients. Since the Ch\^atelet varieties are not homogeneous, the above result does not guarantee the rational Hasse principle with probability $1$. For the rational Hasse principle, Skorobogatov and Sofos \cite[Theorem 1.3]{SS} have proved the Ch\^atelet varieties $X_{K, f}$ satisfy the rational Hasse principle for a positive proportion of polynomials $f \in \ZZ[t]$ of degree $d$. 

We study the rational points on the Ch\^atelet variety
	\begin{eqnarray*}
		\mathbf{N}_K(y_1, \dots, y_e) = \sum_{0 \leq i \leq d} c_i t^i.
	\end{eqnarray*}
	We write $t = u/v$ with $u, v \in \ZZ$ and $v \neq 0$, then the equation becomes
	\begin{eqnarray*}
		\mathbf{N}_K(y_1, \dots, y_e) v^d = \sum_{0 \leq i \leq d} c_i u^i v^{d-i}.
	\end{eqnarray*}
	Inspired by \cite{VA-V}, if we assume that $d$ is a multiple of $e = \deg(K / \QQ)$, say $d = be$, then we get 
	\begin{eqnarray}\label{norm-g}
		\mathbf{N}_K(x_1, \dots, x_e) = \sum_{0 \leq i \leq d} c_i u^i v^{d-i},
	\end{eqnarray}
where $x_i = y_i v^b$.
By homogeneity, $(\ref{norm-f})$ has a rational solution if and only if $(\ref{norm-g})$ has an integer solution with $v \neq 0$, assuming $e|d$.
	
Let $d \geq 2$ with $e | d$ and $\cc = (c_0, \dots, c_d) \in \ZZ^{d+1}$. Let $\uu = (u, v)$ and write
$$g_\cc(\uu) = c_0u^d + c_1 u^{d-1}v + \dots + c_d v^d.$$
For fixed $K / \QQ$, we consider the variety $X_{K, g_\cc}$ defined by the equation
\begin{eqnarray}\label{cha-bino}
	\NNN_K(\xx) = g_{\cc}(\uu) \neq 0.
\end{eqnarray}
For a vector $\mathbf{a} = (a_1, \dots, a_n) \in \RR^n$, we denote that
$$|\mathbf{a}| = \max(|a_1|, \dots, |a_n|).$$
Equation (\ref{cha-bino}) is said to be \emph{locally solvable}, if it is solvable over $\QQ_p$ and $\RR$.
We now introduce a few definitions for the sets of vectors \(\cc \in \ZZ^{d+1}\) such that the solutions of (\ref{cha-bino}) have the corresponding properties. We define
\begin{align}\label{Def-S}
    \nonumber S(H) & = \{\cc \in \ZZ^{d+1}: |\cc| \leq H, \, c_0 \cdot c_d \neq 0 \}, \\
    S^{\mathrm{loc}}(H) & = \{\cc \in S(H): \text{(\ref{cha-bino}) is locally solvable} \}, \\
	\nonumber S^\QQ(H) & = \{\cc \in S^{\mathrm{loc}}(H): \text{(\ref{cha-bino}) has a rational solution}\}.
\end{align}

\begin{theorem}\label{thm-HP}
	Let $d$ be a positive integer and $K$ be any finite extension of $\QQ$ of degree $e$ with $e|d$. Then we have
	$$\frac{\# S^\QQ(H)}{\# S^{\mathrm{loc}}(H)} = 1 + O_{d,K,A} \big( (\log H)^{-A} \big),$$
	as $H \rightarrow \infty$.
\end{theorem}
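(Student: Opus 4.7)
The plan is: for almost all locally solvable $\cc \in S^{\mathrm{loc}}(H)$, produce an integer pair $(u,v)$ with $g_\cc(u,v) = p$ a rational prime splitting completely in the Hilbert class field $F$ of $K$. Whenever this happens, every prime of $\OO_K$ lying above $p$ has residue degree $1$ and is principal, so $p = \mathrm{Nm}_{K/\QQ}(\alpha)$ for some $\alpha \in \OO_K$; writing $\alpha = x_1 \omega_1 + \dots + x_e \omega_e$ yields an integer solution to $(\ref{cha-bino})$ with $v \neq 0$, and the homogeneity reduction described just before the theorem converts this into the rational solution of $(\ref{norm-f})$ that is required. Hence it suffices to bound the set of $\cc \in S^{\mathrm{loc}}(H)$ for which no such $(u,v)$ exists. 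Handling archimedean sign compatibility (so that $g_\cc(u,v)$ lies in the image of $\NNN_K$ over $\RR^e$) is built in through the local solvability hypothesis.

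To produce such $(u,v)$ I would apply a Chebotarev-twisted variant of Theorem~\ref{thm-BH} with $r=1$, in which $\Lambda(g_\cc(u,v))$ is replaced by the weight $\Lambda(g_\cc(u,v)) \, \mathbf{1}[\mathrm{Frob}_{g_\cc(u,v)}(F/\QQ) = \mathrm{id}]$; the corresponding singular series acquires an additional factor $1/[F:\QQ]$ from Chebotarev. Such a refinement is natural within the averaging framework used for Theorem~\ref{thm-BH}, which reduces matters to Siegel--Walfisz-type equidistribution estimates for the arithmetic function in question: the twisted weight satisfies such estimates via the effective Chebotarev density theorem. To feed $\cc \in S^{\mathrm{loc}}(H)$ into the theorem, I would partition the box $|\cc|\le H$ along congruence classes modulo a bounded modulus $M = M(K)$ encoding the local-solvability constraints, and then along dyadic combinatorial sub-cubes of dimension $\ge 2$ whose constant coefficient $c_d$ is not fixed to $0$; the condition $c_0 c_d \neq 0$ is either built into the fixed coefficients or cut out at negligible cost by removing a lower-dimensional slice.

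Independently, I would check that for $100\%$ of $\cc \in S^{\mathrm{loc}}(H)$ the singular series $\mathfrak{S}_{g_\cc}(x)$ is bounded below by a positive constant depending only on $K$. Local solvability keeps the Euler factors at ramified and small primes away from zero; at the remaining primes one controls $\#\{(u,v) \in \FF_p^2 : g_\cc(u,v) = 0\}$ via the Weil bound for plane curves, so the Euler product converges to a uniformly positive value outside a thin set of $\cc$. Combining this lower bound with the Chebotarev-refined Theorem~\ref{thm-BH} yields, for all but $\#S^{\mathrm{loc}}(H) / (\log H)^A$ locally solvable $\cc$,
\[
\sum_{u,v \leq H^c} \Lambda(g_\cc(u,v)) \, \mathbf{1}[\mathrm{Frob}_{g_\cc(u,v)}(F/\QQ) = \mathrm{id}] \;\gg_K\; H^{2c},
\]
which forces the existence of a pair $(u,v)$ of the desired type and hence a rational point.

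The principal obstacle is establishing the Chebotarev-twisted averaged Bateman--Horn asymptotic with the uniform $(\log H)^{-A}$ saving: the Siegel-type constants appearing in effective Chebotarev are ineffective, and they must be absorbed into the dispersion and higher-moment estimates underlying the proof of Theorem~\ref{thm-BH}, mirroring the \cite{BST} argument for the integral Hasse principle but with a nonabelian Galois twist. A secondary technical point is bounding the thin set of $\cc \in S^{\mathrm{loc}}(H)$ on which $\mathfrak{S}_{g_\cc}(x)$ is atypically small, so that this exceptional contribution does not swamp the $(\log H)^{-A}$ density of failures permitted by the theorem.
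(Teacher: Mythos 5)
Your approach is genuinely different from the paper's: rather than producing prime values of $g_\cc$ with prescribed Chebotarev data, the paper directly counts integer representations $\NNN_K(\xx)=g_\cc(\uu)$ over a bounded box via a local-density argument (Proposition~\ref{thm-cha}), comparing the true count $N_\cc(x)$ with the localized count $\hat N_\cc(x)$ of (\ref{def-lcf}). Primality of the values of $g_\cc$ plays no role there; the paper only needs that a random integer of size $\asymp B^e$ has, on average, $\asymp 1$ representations by the norm form in the region $B\BB$.

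Within your strategy there is a structural gap that I do not see how to patch without essentially abandoning the prime-producing framework. Your Chebotarev-twisted singular series $\mathfrak{S}'_{g_\cc}$ vanishes identically whenever $g_\cc$ has a fixed prime divisor $p$ (in particular whenever $p\mid\gcd(c_0,\dots,c_d)$, but also for content-one forms such as $g_\cc\equiv uv(u+v)\pmod 2$ when $d\geq 3$), since then the local Euler factor at $p$ in Theorem~\ref{thm-BH} is zero and $g_\cc$ never takes prime values. Local solvability of $\NNN_K(\xx)=g_\cc(\uu)$ at $p$ does \emph{not} rule this out: the $p$-adic point is allowed to have $p\mid\NNN_K(\xx)$, so $p$ being a local norm is perfectly compatible with $p$ dividing every value of $g_\cc$. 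Because "fixed prime divisor at $p$" is a congruence condition on $\cc\pmod{p^{O(1)}}$, such $\cc$ form a \emph{positive proportion} of $S^{\mathrm{loc}}(H)$, far exceeding the $(\log H)^{-A}$ tolerance of the theorem, and for them your lower bound $\sum_{u,v}\Lambda(g_\cc(u,v))\mathbf{1}[\cdots]\gg x^2$ cannot hold. The paper's density $\sigma_\cc(W_0)$ in (\ref{def-sigma}) stays bounded below in this situation because it measures solubility of $\NNN_K(\xx)\equiv g_\cc(s,t)$, not nonvanishing of $g_\cc$ itself. Two secondary issues compound this: (i) complete splitting in the \emph{wide} Hilbert class field only yields $\mathrm{Nm}_{K/\QQ}(\alpha)=\pm p$; to force the $+$ sign you need the \emph{narrow} class field or a unit of norm $-1$, and real local solvability constrains the sign of $g_\cc$, not the achievable sign of an integral norm; (ii) the Chebotarev-twisted averaged Bateman--Horn with the uniform $(\log H)^{-A}$ saving is asserted but not derived from Theorem~\ref{prop_general}, and since the twist is nonabelian, handling the exceptional moduli set $\mathcal{Q}$ is not a formal extension of Proposition~\ref{prop_Lambda}.
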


In Proposition \ref{thm-cha}, we shall prove a stronger version of Theorem \ref{thm-HP}, in which we can show that for all but $O(H^{d+1} / \big(\log H)^A\big)$ choices of coefficient vectors $\cc \in S^{\mathrm{loc}}(H)$, the equation (\ref{cha-bino}) has at least $H^{\Delta}$ rational solutions, where $\Delta$ can be any positive real number less than $\frac{1}{de(e+3)}$.

%In Proposition \ref{thm-cha}, we shall prove a stronger version of Theorem \ref{thm-HP}, in which we can almost always produce at least $H^{\Delta}$ rational solutions to the equation $\NNN_K(\xx) = g_\cc(t)$, for an appropriate constant $\Delta > 0$ depending only on $d$ and $e$. This quatification will permit us to establish a weak form of Zariski density, for $100 \%$ of admissible polynomials.

%\begin{corollary}\label{weak-ZD}
%	Let $K/\QQ$ be a number field of degree $e \geq 2$ and let $D, L \geq 1$. Then, as admissible degree $d$ coefficient vectors are ordered by height, with probability 1, the rational solutions to the equation $\NNN_K(\xx) = g(u, v)$ are not contained in a union of $\leq L$ irreducible curves of degree $k \in [d(2e+7), D]$.
%\end{corollary}

\subsection*{Acknowledgments}

I am deeply grateful to my advisor Tim Browning for suggesting this problem and for the many valuable discussions that shaped this work. I would also like to thank Efthymios Sofos, Matteo Verzobio, and Shuntaro Yamagishi for  discussions and insights that contributed to this paper.

% The strategy used to prove Theorem \ref{thm-HP} can be generalized to further show that $100\%$ of the coefficient vectors $\mathbf{c} \in S^{\mathrm{loc}}(H)$, the set of rational points $X_{K, g_\cc}(\QQ)$ on the variety defined by \eqref{cha-bino} is Zariski dense.  The key additional ingredient is a version of weak approximation.
%Let $$S^{\mathrm{ZD}}(H) = \{\cc \in S^\QQ(H): X_{K, g_\cc}(\QQ) \text{ is Zariski dense}\}.$$

%\begin{theorem}\label{thm-ZD}
%	Let $d$ be a positive integer and $K$ be any finite extension of $\QQ$ of degree $e$ with $e|d$. Then we have
%	$$\frac{\# S^{\mathrm{ZD}}(H)}{\# S^{\mathrm{loc}}(H)} = 1 + O_{d,K,A} \big( (\log H)^{-A} \big),$$
%	as $H \rightarrow \infty$.
%\end{theorem}

\section{Equidistribution controls sums for almost all binary forms}

This section extends the main technical tool from \cite[Theorem 2.2]{BST}, demonstrating that controlling averages of a general arithmetic function over values of random polynomials can be effectively achieved when the function is well-distributed in arithmetic progressions. We generalize this result from polynomials to binary forms.

For $x > 1$, we define $x_1 = \exp \big(\frac{\sqrt{\log x}}{\log \log x}\big)$.

\begin{Theorem}\label{prop_general}
	Let $A \geq 1, \eps > 0$ and $0 \leq k < l \leq d$ be integers. Let $H \geq H_0(A, d)$,
 $$\exp \Big( (\log H)^{1/100} \Big) \leq x \leq H^{1/(2d)}.$$
 Let $F: \ZZ \rightarrow \CC$ be a sequence such that
	\begin{enumerate}
		\item $|F(n)| \leq \tau_B(|n|)$ for all $n \in \ZZ$ and for some $B \geq 1$ satisfying $200B^{2d+4} \leq A$;
		\item There exists a set of prime powers $\mathcal{Q} \subset [1, x^d] \cap \ZZ$ such that $$\sum_{q \in \mathcal{Q}} q^{-1/(4d)} \ll_A (\log x)^{-3(A+1)},$$ and such that for any $q \leq x^d$ that is not a multiple of any element of $\mathcal{Q}$, we have
		\begin{eqnarray*}
			\max_{\substack{1 \leq v \leq q \\ \gcd(v, q) \leq x_1}} \sup_{\substack{I \text{ interval} \\ |I| > H^{1 - \epsilon}x^{l-k} \\ I \subset [-2Hx^d, 2Hx^d]}} \frac{q}{|I|} \Big| \sum_{\substack{n \in I \\ n \equiv v \, (\mod q)}} F(n) \Big| \ll (\log H)^{-40Ad^2}.
		\end{eqnarray*}
	\end{enumerate}
	Then for any binary form 
 \begin{align}\label{g-form}
     g = \sum_{i = 0}^d c_i x^i y^{d-i} \in \ZZ[x,y]
 \end{align}
	of degree $d$ with $c_i \in [-H, H] \cap \ZZ$ such that $c_0 \cdot c_d \neq 0$, and for any coefficients $\alpha_{m,n} \in \CC$ such that $|\alpha_{m,n}| \leq 1$, we have 
\begin{align*}
		\sup_{\substack{x' \in [x/2, x]}} \, \sum_{|a|,|b| \leq H} \Big| \sum_{\substack{1 \leq m, n \leq x'}} \alpha_{m,n} F(am^kn^{d-k} + bm^ln^{d-l} + g(m, n)) \Big|^2 \ll H^2 x^4 (\log x)^{-A}.
\end{align*}
\end{Theorem}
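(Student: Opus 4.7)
The plan is to follow the strategy of \cite[Theorem 2.2]{BST} for univariate polynomials, adapting it to binary forms by exploiting the bivariate linear structure of the shift $am^k n^{d-k}+bm^ln^{d-l}$.

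\textbf{Opening the square and isolating a degenerate locus.}
Expanding the modulus squared and swapping the summations yields
\[
\sum_{|a|,|b|\leq H} \Bigl|\sum_{m,n\leq x'} \alpha_{m,n} F(P)\Bigr|^2 = \sum_{m,n,m',n'\leq x'} \alpha_{m,n} \overline{\alpha_{m',n'}}\, W(m,n,m',n'),
\]
where $P=am^kn^{d-k}+bm^ln^{d-l}+g(m,n)$, $Q$ is the analogue for $(m',n')$, and $W=\sum_{|a|,|b|\leq H} F(P)\overline{F(Q)}$. The linear map $(a,b)\mapsto(P,Q)$ has Jacobian
\[
\det J = m^k (m')^k n^{d-l} (n')^{d-l} \bigl((nm')^{l-k}-(mn')^{l-k}\bigr),
\]
which vanishes precisely when $(m,n)$ and $(m',n')$ are $\QQ$-proportional. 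This degenerate locus has dimension $3$, giving $O(x^3)$ quadruples; on it I would apply the divisor bound $|F(n)|\leq \tau_B(|n|)$ together with the moment estimate $\sum_{|N|\leq X}\tau_B(N)^2\ll X(\log X)^{O_B(1)}$ to bound $|W|\ll H^2(\log H)^{O_B(1)}$, yielding an acceptable total of $\ll H^2 x^3(\log H)^{O_B(1)}$.

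\textbf{Non-degenerate pairs: reduction to AP sums.}
On the non-degenerate locus, changing variables $(a,b)\mapsto(P,Q)$ makes the summand run over the coset $(g(m,n),g(m',n'))+J\ZZ^2$ intersected with the parallelogram $J([-H,H]^2)$. Writing $W=\sum_Q \overline{F(Q)}\sum_{P:(P,Q)\in L}F(P)$, the inner set of admissible $P$'s forms an arithmetic progression of some step $q$ (a Smith invariant of $J$) inside an interval of length comparable to $Hq_1$, where $q_1=m^kn^{d-k}\leq x^d$. Applying hypothesis (2) to this inner AP sum produces cancellation of size $(\log H)^{-40Ad^2}$; combining with a pointwise divisor bound on $|F(Q)|$ gives $|W|\ll H^2(\log H)^{-40Ad^2+O_B(1)}$ for each non-degenerate quadruple. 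Summing over the $\ll x^4$ non-degenerate quadruples and using $200B^{2d+4}\leq A$ produces the required bound $\ll H^2 x^4 (\log H)^{-A}$.

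\textbf{Main obstacles.}
The delicate points are as follows. (a) The AP step $q$ must satisfy $q\leq x^d$ for hypothesis (2) to apply; this may fail when $|\det J|$ is atypically large relative to the gcd of the relevant column of $J$. I would address this by choosing whether to sum $P$ or $Q$ first based on the Smith structure, or by subdividing the quadruples further, so that at least one direction always yields a modulus $\leq x^d$. (b) The interval length must exceed $H^{1-\eps}x^{l-k}$; this holds generically since $m,n,m',n'\asymp x$ makes the projection length $\asymp Hx^d$, and the $o(x^4)$ boundary tuples where some coordinate is atypically small are absorbed by the trivial bound. (c) The modulus must avoid being a multiple of an element of $\mathcal{Q}$ and the relevant residue class must satisfy $\gcd(v,q)\leq x_1$; the bad-modulus contribution is controlled by the summability $\sum_{q\in\mathcal{Q}}q^{-1/(4d)}\ll(\log x)^{-3(A+1)}$, and the coprimality condition is handled by a Mertens-type argument discarding residue classes with atypically many small prime factors. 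Point (a) is the main obstacle, as it requires a careful case analysis of the Smith invariants of $J$ that has no analogue in the univariate setting of \cite{BST}.
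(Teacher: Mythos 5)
Your proposal follows the same broad strategy as the paper: open the square, set aside a small bad set of quadruples, change variables to reduce the $(a,b)$-sum to arithmetic-progression sums, and invoke hypothesis~(2). But there are several substantive gaps, and one of the obstacles you flag is indeed the crux.

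\textbf{On the bad set.} You discard only the degenerate locus $\det J=0$, i.e.\ proportional pairs, giving $O(x^3)$ tuples. That is not enough. The paper's Lemma~\ref{Lemma-1.5} removes the larger set $\mathcal{M}_{3A}^c$ of quadruples failing any one of: $m_i,n_i>x/(\log x)^{3A}$; $\gcd(m_i,n_j)<(\log x)^{3A}$; $|n_1m_2-n_2m_1|>x^2/(\log x)^{3A+1}$; $\gcd(g(m_1,n_1),m_1^kn_1^{d-l})<x_2$. The last of these is crucial and has no analogue in your sketch: it is precisely what (via Lemma~\ref{Lemma-1.3}, which bounds $\gcd(g(m,n),m^kn^{d-l})$ by $\gcd(m,n)^d\gcd(c_0,m)^k\gcd(c_d,n)^{d-l}$) guarantees the residue class $v$ in the resulting AP sum satisfies $\gcd(v,q)\le x_1$, without which hypothesis~(2) is inapplicable. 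Your proposed ``Mertens-type argument discarding residue classes with atypically many small prime factors'' does not capture this, because the obstruction is not small prime factors of $v$ but large common factors with $c_0,c_d$. The size of $\mathcal{M}^c$ is $\ll x^4(\log x)^{-3A}$, not $O(x^3)$, and this weaker bound is what feeds the Cauchy--Schwarz step.

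\textbf{On the non-degenerate reduction.} Your appeal to ``Smith invariants of $J$'' is vague and, crucially, the structure is more delicate than a single AP of step given by a Smith invariant. The paper first passes to $u_i'=(u_i-g(m_i,n_i))/(m_i^kn_i^{d-l})$, solves $u_i'=an_i^{l-k}+bm_i^{l-k}$ modulo $\Delta=(n_1m_2)^{l-k}-(n_2m_1)^{l-k}$, and obtains a unique class modulo $\Delta_0=\lcm(\Delta/\gcd(m_2^{l-k},\Delta),\,\Delta/\gcd(n_2^{l-k},\Delta))$ after verifying the compatibility conditions~(\ref{cri-u1'}); only then does it convert back to $u_1=u_1'm_1^kn_1^{d-l}+g(m_1,n_1)$ and identify the modulus $q$. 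You would also need the two-way size estimates $x^{2(l-k)}(\log x)^{-12A(l-k)}\ll\Delta_0\ll x^{2(l-k)}$, which rely on the $\mathcal{M}$ conditions to control $\gcd(n_2^{l-k},\Delta)$.

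\textbf{On the ``large gcd'' subcase.} Even after the $\mathcal{M}$-conditions are imposed, one cannot force $\gcd(u_1''m_1^kn_1^{d-l}+g(m_1,n_1),\Delta_0)\le x_2$, and the complementary regime (the paper's Subcase~III) is not a small bad set you can trivially absorb: it occurs for some values of $u_2'$ for essentially every good quadruple. It is handled by a separate Cauchy--Schwarz, with $S_3$ controlled by \cite[Lemma~2.6]{BST} (few $u_2'$ can have a large gcd with $\Delta$) and $S_2$ controlled by higher moments of $\tau_B$ at $\Delta$ via \cite[Lemma~2.9]{BST}. Your proposal has no counterpart to this.

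\textbf{On your obstacle (a).} You are right to flag $q\le x^d$: the resulting modulus $q=\lcm(\Delta_0,m_1^kn_1^{d-l})$ has $\Delta_0\asymp x^{2(l-k)}$, $m_1^kn_1^{d-l}\asymp x^{k+d-l}$, and $\gcd\ll(\log x)^{12Ad^2}$, so $q\asymp x^{d+l-k}$ up to logs, which exceeds $x^d$ whenever $l>k$ (i.e.\ always). Your proposed fix (choosing the order of summation by Smith structure, or subdividing) will not change this: the relevant step is forced by $|\det J|\asymp x^{2d}$ divided by the $Q$-side stabiliser, and it comes out $\gg x^d$ generically. In the paper as written this looks like a mismatch between hypothesis~(2), which restricts to $q\le x^d$, and Subcase~I, which applies it to a modulus of size up to $x^{d+l-k}\le x^{2d}$. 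This is not something your approach resolves either; it points to the hypothesis (and the admissible range $c<5/(19d)$ downstream) deserving a second look, with $x^{2d}$ in place of $x^d$ and the corresponding adjustment $c<5/(43d)$.

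In short: the architecture is right, but the argument as proposed is missing the $\mathcal{M}$-decomposition that delivers the gcd control required by hypothesis~(2), the entire Subcase~III analysis, and a working resolution of the modulus-size constraint that you correctly identify as the main obstacle.
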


\vspace{-0.5em}

\subsection{Preparatory results}

\begin{lemma}\label{Lemma-1.2}
    Let $q = p^e$ be a prime power and fix $a,b,c \in \NN$. We have
    \begin{align}\label{eq.1.2-main}
        \# \{ (v_1, v_2) \in (\NN \cap [1,x])^2: q | v_1^{a} v_2^{b} (v_1^c - v_2^c) \} \ll \frac{x^2}{q^{1 / (2 \max (a,b,c))}}.
    \end{align}
\end{lemma}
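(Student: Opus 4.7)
My plan is to split the count according to the $p$-adic valuations $\alpha := v_p(v_1)$ and $\beta := v_p(v_2)$, since these determine the valuation of $v_1^c - v_2^c$ in all regimes. Throughout I set $M := \max(a,b,c)$ and $q = p^e$.

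\emph{Off-diagonal case} ($\alpha \neq \beta$): without loss of generality $\alpha < \beta$. Then $v_p(v_1^c - v_2^c) = c\alpha$, so the condition $q \mid v_1^a v_2^b(v_1^c - v_2^c)$ becomes the linear constraint $(a+c)\alpha + b\beta \geq e$. The number of pairs with prescribed $(\alpha, \beta)$ is trivially at most $(x/p^\alpha)(x/p^\beta) = x^2/p^{\alpha+\beta}$. A short optimization of $\alpha + \beta$ under the linear constraint and the order constraint $\alpha < \beta$ yields $\alpha + \beta \geq \min(e/b,\, 2e/(a+b+c))$; the first bound is attained at $\alpha = 0$ when $b \geq a+c$, and the second near $\alpha = \beta - 1$ when $b < a+c$. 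Since $b \leq M$ and $a+b+c \leq 3M$, both quantities are at least $e/(2M)$. Summing the resulting geometric series in $(\alpha,\beta)$ gives a total contribution $\ll x^2/q^{1/(2M)}$.

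\emph{Diagonal case} ($\alpha = \beta$): writing $v_i = p^\alpha u_i$ with $p \nmid u_i$, the condition reduces to $u_1^c \equiv u_2^c \pmod{p^\delta}$ where $\delta := \max(0,\, e - (a+b+c)\alpha)$. Since $(\ZZ/p^\delta\ZZ)^\times$ is a product of at most two cyclic groups, the map $u \mapsto u^c$ has fibers of size at most $2c$; hence for each $u_1 \in [1, x/p^\alpha]$ coprime to $p$ there are $\ll c(x/p^{\alpha+\delta} + 1)$ valid partners $u_2$. Summing over $u_1$ and then over $\alpha$, the main contribution $\sum_\alpha c\, x^2/p^{2\alpha+\delta}$ is dominated by its minimum, which is attained at $\alpha \approx e/(a+b+c)$ and equals $\ll x^2/p^{2e/(a+b+c)}$; this is again $\ll x^2/q^{1/(2M)}$ because $2/(a+b+c) \geq 2/(3M) \geq 1/(2M)$. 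The leftover error $\sum_\alpha cx/p^\alpha \ll cx$ is dominated by $x^2/q^{1/(2M)}$ in the regime where the claimed bound is non-trivial.

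I expect the only delicate step to be the extremal analysis of $\alpha + \beta$ in the off-diagonal case, which requires separating the sub-regimes $b \geq a+c$ and $b < a+c$ and checking that both give at least $e/(2M)$. Everything else is routine: a standard geometric-series summation, together with the elementary fact that the $c$-th power map on $(\ZZ/p^\delta\ZZ)^\times$ is $O(c)$-to-one.
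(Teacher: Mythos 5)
Your proof is correct in substance and takes a genuinely different decomposition from the paper's. The paper writes $v_i = d\,u_i$ with $d=\gcd(v_1,v_2)$, stratifies by $k=v_p(d)$, and then exploits that for $\gcd(u_1,u_2)=1$ the three factors $u_1^a$, $u_2^b$, $u_1^c-u_2^c$ are pairwise coprime at $p$, so that $p^{e'}$ must divide exactly one of them; the third alternative is then handed off to \cite[Lemma 2.3]{BST}. You instead stratify directly by the pair $(\alpha,\beta)=(v_p(v_1),v_p(v_2))$, which splits cleanly into an off-diagonal regime $\alpha\neq\beta$ where the valuation of the whole product is the linear form $(a+c)\alpha+b\beta$ (for $\alpha<\beta$) and the count reduces to a small linear program, and a diagonal regime $\alpha=\beta$ where the problem reduces to $u_1^c\equiv u_2^c\bmod p^\delta$ on units, which you handle by the elementary fact that the $c$-th power map on $(\ZZ/p^\delta\ZZ)^\times$ is at most $2c$-to-one. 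This makes your argument more self-contained (no appeal to the black-box lemma from \cite{BST}), at the cost of a bit more casework in the extremal analysis of $\alpha+\beta$, including the degenerate sub-case $a+c=b$ where the geometric series has ratio $1$ and you pick up a factor like $e/M$, which is nonetheless absorbed by $p^{e/(2M)}$ since $(2t+1)/2^{t}$ is bounded. Two small points worth flagging: first, the summation step in the off-diagonal case deserves a sentence making the geometric convergence explicit (the slope of the extremal $\alpha+\beta$ can vanish on part of the range), rather than "summing the resulting geometric series"; second, your remark that the diagonal error $\ll cx$ is only dominated "in the regime where the claimed bound is non-trivial" is an honest observation but applies equally to the paper's own proof (both bounds implicitly require $q^{1/(2M)}\ll x$, as the diagonal pairs $(v,v)$ alone contribute $x$), so it is not a defect particular to your approach.
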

\begin{proof}
    Let $d = \gcd(v_1, v_2)$ and $v_1 = d u_1, v_2 = d u_2$. We have
    $$v_1^a v_2^b (v_1^c - v_2^c) = d^{a+b+c} u_1^a u_2^b (u_1^c - u_2^c).$$

    The left hand side of (\ref{eq.1.2-main}) is $\leq \sum_{k \geq 0} S_k$, where 
    \begin{align*}
      S_k =  \sum_{\substack{d \leq x \\ v_p(d) = k}}  \#\{ (u_1, u_2) \in (\NN \cap [1,x/d])^2: \gcd(u_1, u_2) = 1, p^{e} | p^{k(a+b+c)} u_1^{a} u_2^{b} (u_1^c - u_2^c) \}.
    \end{align*}
    When $k(a+b+c) \leq e$, we denote $e' = e - k(a+b+c)$. Then we know that
    \begin{align*}
        S_k & = \sum_{\substack{d \leq x \\ v_p(d) = k}}  \#\{ (u_1, u_2) \in (\NN \cap [1,x/d])^2: \gcd(u_1, u_2) = 1, p^{e'} | u_1^{a} u_2^{b} (u_1^c - u_2^c) \}.
    \end{align*}
    Since $\gcd(u_1,u_2) = 1$, the three integers $u_1^a, u_2^b$, and $u_1^c - u_2^c$ are pairwise coprime, and so $p^{e'}$ divides exactly one of them. For fixed $d$, there are at most $(x/d)^2 \cdot (p^{-e'/a} + p^{-e'/b})$ pairs of $(u_1, u_2)$ such that $p^{e'}$ divides $u_1^a$ or $u_2^b$. According to \cite[Lemma 2.3]{BST}, we have
    \begin{align*}
       \#\{ (u_1, u_2) \in (\NN \cap [1,x/d])^2: \gcd(u_1, u_2) = 1, p^{e} | (u_1^c - u_2^c) \} \ll \frac{(x/d)^2}{p^{e' / 2c}}.
    \end{align*}
    This yields
    \begin{align*}
        S_k \ll \sum_{\substack{d \leq x \\ v_p(d) = k}} \frac{(x/d)^2}{p^{e' / (2 \max (a,b,c))}} \ll \frac{(x / p^k)^2}{p^{ (e - k(a+b+c) ) / (2 \max (a,b,c))}}.
    \end{align*}
    When $k(a+b+c) > e$, we consider the trivial bound $S_k \leq (x / p^k)^2$. By combining these two ranges of $k$, the left hand side of (\ref{eq.1.2-main}) is 
    \begin{align*}
     & \ll \sum_{\substack{k \geq 0 \\ k(a+b+c) \leq e}} \frac{(x/p^k)^2}{p^{(e-k(a+b+c))/(2 \max (a,b,c))}} + \sum_{\substack{k \geq 0 \\ k(a+b+c) > e}} (x/p^k)^2 \\
     & \leq \sum_{\substack{k \geq 0 \\ k(a+b+c) \leq e}} \frac{x^2}{p^{e/(2 \max (a,b,c))}} +  \sum_{\substack{k \geq 0}} \frac{1}{p^{2k}} \cdot \frac{x^2}{p^{2e/(a+b+c)}} \\
     & \ll \frac{x^2}{q^{1/(2 \max (a,b,c))}} + \frac{x^2}{q^{2/(a+b+c)}} \\
     & \ll \frac{x^2}{q^{1/ (2\max(a,b,c))}}.
    \end{align*}
\end{proof}

\begin{lemma}\label{Lemma-1.3}
	Let $0 \leq k < l \leq d$. Let $g$ be a binary form in the form (\ref{g-form}) of degree $d$ with $c_i \in [-H, H] \cap \ZZ$ such that $c_0 \cdot c_d \neq 0$. Then for any $m, n \in \ZZ$, we have 
	\begin{eqnarray*}
		\gcd(g(m, n), m^k n^{d-l}) \leq \gcd(m,n)^d \gcd(c_0,m)^k \gcd(c_d,n)^{d-l}.
	\end{eqnarray*}
\end{lemma}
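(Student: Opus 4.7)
The plan is to verify the inequality prime by prime. Fixing a prime $p$, set $a = v_p(m)$, $b = v_p(n)$, $\alpha = v_p(c_0)$, $\delta = v_p(c_d)$. Since $v_p(\gcd(x,y)) = \min(v_p(x), v_p(y))$, the lemma reduces to showing
\[
\min\bigl(v_p(g(m,n)),\, ka + (d-l)b\bigr) \;\leq\; d\min(a,b) + k\min(\alpha, a) + (d-l)\min(\delta, b).
\]
Without loss of generality we may assume $a \leq b$; the case $b < a$ reduces to this one by applying the lemma to the form $g'(x,y) = g(y,x) = \sum c_{d-j} x^j y^{d-j}$ with the exponent pair $(k', l') = (d - l, d - k)$, which still satisfies $0 \leq k' < l' \leq d$ and swaps the roles of $(c_0, c_d)$ and of $(a, b)$.

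The heart of the argument is the ultrametric principle applied to the decomposition $g(m,n) = c_d m^d + \sum_{i<d} c_i m^i n^{d-i}$. For each $i < d$ the inner monomial has $p$-adic valuation at least $ia + (d-i)b \geq da + (b - a)$, while $v_p(c_d m^d) = \delta + da$. I split on whether $\delta < b - a$. In this first regime the minimum among the monomial valuations is uniquely attained at $i = d$, so the strong form of the ultrametric inequality yields $v_p(g(m,n)) = \delta + da$ exactly. The required bound then follows from a short computation: when $l \leq d - 1$ it reduces to $(l - d + 1)\delta \leq k\min(\alpha, a)$, whose left-hand side is non-positive; when $l = d$ one discards this and uses instead the trivial estimate $\min \leq ka$ together with $k \leq d - 1$.

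In the complementary regime $\delta \geq b - a$, the value $v_p(g(m,n))$ is not pinned down, so I use $\min \leq ka + (d-l)b$ throughout. The target inequality then reads
\[
k\bigl(a - \min(\alpha, a)\bigr) + (d-l)\bigl(b - \min(\delta, b)\bigr) \leq da,
\]
and the key observation is that $b - \min(\delta, b) \leq a$ whenever $\delta \geq b - a$, while $a - \min(\alpha, a) \leq a$ is automatic. Hence the left-hand side is at most $(k + d - l)a \leq (d - 1)a < da$, where the middle inequality uses the hypothesis $k < l$. I foresee no conceptual obstacle beyond careful bookkeeping; the whole argument rests on the ultrametric principle for binary forms with arguments of unequal $p$-adic valuations, coupled with the strict bound $k + (d - l) \leq d - 1$ afforded by $k < l \leq d$.
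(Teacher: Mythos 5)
Your proof is correct, but it takes a genuinely different route from the paper's. The paper argues globally with gcd identities: it extracts $t=\gcd(m,n)$, writes $m=tm_0$, $n=tn_0$ with $\gcd(m_0,n_0)=1$, pulls out $t^{d}$ via $\gcd(t^{l-k}g(m_0,n_0),m_0^kn_0^{d-l})\le t^{l-k}\gcd(g(m_0,n_0),m_0^kn_0^{d-l})$, then uses coprimality of $m_0^k$ and $n_0^{d-l}$ to split the remaining gcd into $\gcd(g(m_0,n_0),m_0)^k\gcd(g(m_0,n_0),n_0)^{d-l}$, and finally reduces modulo $m_0$ (resp.\ $n_0$) to identify these with $\gcd(c_0,m_0)$ and $\gcd(c_d,n_0)$. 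That chain is short and requires no case analysis. You instead work one prime at a time, reduce to a valuation inequality, and exploit the ultrametric principle to pin down $v_p(g(m,n))$ exactly in the regime $v_p(c_d)<v_p(n)-v_p(m)$ and use trivial bounds otherwise. Your decomposition of $g$ into the ``dominant'' monomial $c_dm^d$ versus the rest is really a local unwinding of the paper's reduction-mod-$m_0$ step, while your case $\delta\ge b-a$ plays the role of the paper's $t^d$ factor. The paper's version is cleaner and avoids the per-prime case split and the symmetry reduction (which, as stated, should be framed as applying to the per-prime inequality rather than re-invoking the whole lemma, since $a\le b$ may hold for some primes and fail for others); your version makes the local mechanism more explicit but at the cost of more bookkeeping. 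One cosmetic slip: ``$(d-1)a<da$'' should be ``$\le$'' to cover $a=0$, though the argument survives.
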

\begin{proof}
	Let $t = \gcd(m, n)$ and $m = m_0t, n = n_0t$. Then
	\begin{align*}
		\gcd(g(m,n), m^k n^{d-l}) & = t^{k+d-l} \gcd(t^{l-k}g(m_0,n_0), m_0^kn_0^{d-l}) \\
            & \leq t^{d} \gcd(g(m_0,n_0), m_0^k n_0^{d-l}) \\
		& = t^{d} \gcd(g(m_0,n_0), m_0^k) \gcd(g(m_0,n_0), n_0^{d-l}) \\
            & \leq t^{d} \gcd(g(m_0,n_0), m_0)^k \gcd(g(m_0,n_0), n_0)^{d-l} \\
		& \leq t^{d} \gcd(c_0, m_0)^k \gcd(c_d, n_0)^{d-l} \\
		& \leq t^d \gcd(c_0, m)^k \gcd(c_d, n)^{d-l}.
	\end{align*}
\end{proof}

\begin{lemma}\label{Lemma-1.4}
   For all $q \in \NN$, $z \geq 1$ and $\alpha \geq 1/2$, we have
   $$\sum_{\substack{d | q \\ d \geq z}} d^{-\alpha} \ll z^{-\alpha+1/2} \exp \Big( \frac{4 (\log(3q))^{1/2}}{(\log \log (3q))^{3/2}} \Big).$$
\end{lemma}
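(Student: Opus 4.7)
My approach is a standard Rankin trick followed by a multiplicative estimate on the divisor-type function $\sigma_{-1/2}(q) := \sum_{d \mid q} d^{-1/2}$.

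\textbf{Step 1 (Rankin's trick).} Since $\alpha \geq 1/2$ and $d \geq z$, the trivial inequality $d^{-(\alpha-1/2)} \leq z^{-(\alpha-1/2)}$ gives $d^{-\alpha} = d^{-1/2} \cdot d^{-(\alpha-1/2)} \leq z^{1/2-\alpha}\, d^{-1/2}$. Summing over divisors $d$ of $q$ with $d \geq z$ reduces the problem to showing
\[
\sigma_{-1/2}(q) \;\ll\; \exp\!\Big(\tfrac{4(\log 3q)^{1/2}}{(\log\log 3q)^{3/2}}\Big).
\]

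\textbf{Step 2 (Euler product).} Since $\sigma_{-1/2}$ is multiplicative,
\[
\sigma_{-1/2}(q) \;=\; \prod_{p^a \| q}\Big(\sum_{j=0}^a p^{-j/2}\Big) \;\leq\; \prod_{p \mid q} \frac{1}{1-p^{-1/2}}.
\]
Taking logarithms and using $-\log(1-t) \leq t + O(t^2)$ on $[0,1/\sqrt{2}]$ reduces matters to bounding $\sum_{p \mid q} p^{-1/2}$, up to an $O(1)$ term coming from $\sum_{p \mid q} p^{-1}$.

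\textbf{Step 3 (Split the prime sum).} For a cutoff $y \geq 2$ to be chosen later, split
\[
\sum_{p \mid q} p^{-1/2} \;\leq\; \sum_{p \leq y} p^{-1/2} \;+\; \sum_{\substack{p \mid q \\ p > y}} p^{-1/2}.
\]
The first piece is bounded by $\ll \sqrt{y}/\log y$ via the prime number theorem. For the second, each prime is $> y$, and the number of prime factors of $q$ exceeding $y$ is $\leq \log q/\log y$, giving the bound $\ll \log q/(\sqrt{y}\log y)$. Balancing the two contributions by choosing $y$ near $\log q$ yields $\sum_{p \mid q} p^{-1/2} \ll \sqrt{\log q}/\log \log q$.

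\textbf{Main obstacle.} The straightforward split above produces an exponent $\sqrt{\log 3q}/\log\log 3q$, which is weaker by a factor of $\sqrt{\log\log 3q}$ than the claimed bound. To gain this factor I would refine Step 3 by taking $y$ slightly smaller than $\log q$ and using the fact that the primes dividing $q$ above $y$ are not just bounded in number by $\log q/\log y$, but contribute on average at least $\log y$ each to $\log q$; more precisely, a dyadic decomposition of the range $(y, q]$ together with the Hardy--Ramanujan--style bound $\#\{p \mid q : p \in (Y, 2Y]\} \leq \log q/\log Y$ allows one to replace the naive $y^{-1/2}\,\omega_{>y}(q)$ by a tighter estimate. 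Choosing $y$ so that the small-prime contribution balances this improved tail yields the target exponent. This calibration of the cutoff is the only nontrivial step; the Rankin reduction and Euler-product manipulation are entirely routine.
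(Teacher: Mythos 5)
Steps 1 and 2 of your argument (Rankin's trick and the reduction to $\prod_{p\mid q}(1-p^{-1/2})^{-1}$) coincide exactly with the paper's own proof, which at that point defers to \cite[Lemma 2.5]{BST}. The gap is in Step~3 and in the refinement you sketch afterwards. Your balanced split correctly yields $\sum_{p\mid q}p^{-1/2}\ll \sqrt{\log q}/\log\log q$, but the dyadic Hardy--Ramanujan refinement you describe cannot gain the missing factor $(\log\log q)^{1/2}$, because the estimate you already have is sharp. Taking $q=\prod_{p\le X}p$, so that $\log q=\theta(X)\sim X$ and $\log\log q\sim\log X$, partial summation gives $\sum_{p\mid q}p^{-1/2}=\sum_{p\le X}p^{-1/2}\sim 2\sqrt{X}/\log X\sim 2\sqrt{\log q}/\log\log q$, whence $\prod_{p\mid q}(1-p^{-1/2})^{-1}=\exp\bigl((2+o(1))\sqrt{\log q}/\log\log q\bigr)$. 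This is precisely the extremal configuration that any refinement of your Step~3 would have to beat, and it cannot, since the Hardy--Ramanujan count $\omega(q)\ll\log q/\log\log q$ and the divisor-range decomposition you invoke are saturated by exactly this $q$.

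In fact this computation shows $(2+o(1))\sqrt{\log q}/\log\log q$ eventually exceeds $4\sqrt{\log 3q}/(\log\log 3q)^{3/2}$ (the two quantities differ by a factor of order $(\log\log q)^{1/2}$), so the displayed exponent $(\log\log 3q)^{3/2}$ is not accessible via the Euler-product bound by any route; it is worth double-checking the precise form of the exponent in \cite[Lemma 2.5]{BST}, to which the paper's proof of this lemma simply appeals. With $\log\log 3q$ in the denominator in place of $(\log\log 3q)^{3/2}$, your Steps 1--3 already constitute a complete and correct proof, supplying exactly the elementary estimate that the paper outsources.
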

\begin{proof}
   We may write
   $$\sum_{\substack{d|q \\ d \geq z}} d^{-\alpha} \leq z^{-\alpha+1/2} \prod_{p | q} \Big( 1 - p^{-1/2} \Big)^{-1}.$$
   The remaining proof is the same as in the proof of \cite[Lemma 2.5]{BST}.
\end{proof}

We will prove a generalization of \cite[Lemma 2.7]{BST}.
\begin{lemma}\label{Lemma-1.5}
	Let $A > 1, d \in \NN$ be fixed and let $x \gg_A 1$. Let $g$ be a binary form in the form (\ref{g-form}) of degree $d$ with $c_i \in [-x^A, x^A] \cap \ZZ$ such that $c_0 \cdot c_d \neq 0$. Define
\begin{equation}\label{eq:define_M}
\mathcal{M}_A=\left\{\mathbf (\mm, \nn) \in (\NN \cap [1,x])^4: 
\begin{array}{l}
m_i, n_i > x / (\log x)^{A}, i = 1,2 \\
\gcd(m_i, n_j) < (\log x)^{A}, 1 \leq i,j \leq 2 \\
|n_1m_2 - n_2m_1| > x^2/ (\log x)^{A+1} \\
\gcd(g(m_1,n_1), m_1^k n_1^{d-l}) < x_2
\end{array}
\right\},
\end{equation}
where $x_2 = \sqrt{x_1} = \exp(\sqrt{\log x}/ (2\log \log x ))$. Then 
\begin{eqnarray*}
	\#((\NN \cap [1,x])^4 \setminus \mathcal{M}_A) \ll x^4 / (\log x)^A.  
\end{eqnarray*}
\end{lemma}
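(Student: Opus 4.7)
The strategy is to write $(\NN \cap [1,x])^4 \setminus \mathcal{M}_A$ as the union of the four sets defined by the failure of each condition in \eqref{eq:define_M}, and bound each by $\ll x^4/(\log x)^A$.

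Conditions (i) and (ii) reduce to elementary counts. The set where some coordinate is $\leq x/(\log x)^A$ has size $\ll x^4/(\log x)^A$ trivially. For (ii), the standard estimate $\#\{(m,n) \in [1,x]^2 : \gcd(m,n) \geq T\} \ll x^2/T$ with $T = (\log x)^A$, multiplied by $x^2$ for the two remaining variables, yields the claim.

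For condition (iii), I plan to run a direct lattice count. Fix $(m_1, n_1)$ and let $t = \gcd(m_1, n_1)$. Writing $m_1 = tm_1'$, $n_1 = tn_1'$ with $\gcd(m_1', n_1') = 1$, the integer solutions of $n_1 m_2 - n_2 m_1 = k$ form a coset of the rank-one lattice $\ZZ \cdot (m_1', n_1')$ when $t \mid k$, giving at most $xt/\max(m_1, n_1) + 1$ points in $[1, x]^2$. Summing over the $\ll K/t + 1$ multiples of $t$ with $|k| \leq K := x^2/(\log x)^{A+1}$ and then over $(m_1, n_1) \in [1,x]^2$, the leading term $\sum 2Kx/\max(m_1, n_1) \ll Kx^2$ follows from the elementary bound $\sum_{m,n \leq x} 1/\max(m,n) \ll x$, which is $\ll x^4/(\log x)^{A+1}$. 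The $k = 0$ contribution is bounded by $\sum_{m_1, n_1 \leq x} xt/\max(m_1, n_1) \ll x^2 \log x$ via standard gcd sums and is absorbed.

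For condition (iv), Lemma \ref{Lemma-1.3} bounds $\gcd(g(m_1, n_1), m_1^k n_1^{d-l})$ by $\gcd(m_1, n_1)^d \gcd(c_0, m_1)^k \gcd(c_d, n_1)^{d-l}$; exceeding $x_2$ forces (by pigeonhole over the nonzero exponents) one of $\gcd(m_1, n_1)$, $\gcd(c_0, m_1)$, or $\gcd(c_d, n_1)$ to be at least $x_2^\eta$ for some $\eta = \eta(d,k,l) > 0$. The first case is handled exactly as in (ii). For the other two, I apply Lemma \ref{Lemma-1.4} with $|c_0|, |c_d| \leq x^A$ to obtain $\sum_{d \mid c_0,\, d \geq T} x/d \ll x T^{-1/2} \exp\bigl(O(\sqrt{\log x}/(\log\log x)^{3/2})\bigr)$; since $x_2 = \exp\bigl(\sqrt{\log x}/(2\log\log x)\bigr)$, the resulting count decays faster than any power of $\log x$, and multiplying by $x^3$ for the remaining variables completes the estimate. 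I expect (iii) to be the principal obstacle: the margin between the natural bound $Kx^2 = x^4/(\log x)^{A+1}$ and the target $x^4/(\log x)^A$ is a single power of $\log x$, so both the lattice count and the average over $(m_1, n_1)$ must be tracked carefully, and the $k = 0$ diagonal has to be separated cleanly.
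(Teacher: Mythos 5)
Your decompositions for (i) and (ii) match the paper's (both defer to the elementary gcd count), and your treatment of (iv) is effectively the same: invoke Lemma \ref{Lemma-1.3}, then reduce to divisors of $c_0$ and $c_d$ and apply Lemma \ref{Lemma-1.4}. The only structural difference in (iv) is that you pigeonhole directly on the three factors $\gcd(m_1,n_1)$, $\gcd(c_0,m_1)$, $\gcd(c_d,n_1)$, whereas the paper conditions on $\gcd(m_1,n_1)<(\log x)^A$ (i.e.\ on already being in the set cut out by property (ii)) and so works with the product $\gcd(c_0,m_1)\gcd(c_d,n_1)$; both routes are fine. The genuine divergence is in (iii). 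The paper's argument is a two-line divisor count: fix $n_1,m_2$, note that $n_2m_1$ ranges over an interval of length $2K$ with $K=x^2/(\log x)^{A+1}$ around $n_1m_2$, and invoke $\sum_{|t-n_1m_2|\le K}\tau(t)\ll K\log x\ll x^2/(\log x)^A$. Your lattice-coset argument — fix $(m_1,n_1)$, view $n_1m_2-m_1n_2=k$ as a coset of $\ZZ\cdot(m_1/t,n_1/t)$ whenever $t=\gcd(m_1,n_1)$ divides $k$, bound the points in $[1,x]^2$ by $xt/\max(m_1,n_1)+1$, sum over $|k|\le K$ and over $(m_1,n_1)$ — is correct and in fact yields the slightly sharper bound $Kx^2=x^4/(\log x)^{A+1}$ for the off-diagonal and $\ll x^2\log x$ for the $k=0$ diagonal. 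The divisor count is shorter and only needs $\sum_{n\le N}\tau(n)\ll N\log N$; your approach is more elementary in the sense of avoiding the divisor function entirely, at the cost of having to track the cross terms $K/t$ and $xt/\max(m_1,n_1)$ separately, and it actually gains a power of $\log$. Either works.
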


\begin{proof}
	It is sufficient to show that the set of $(n_1, n_2)$ that fail any individual property in (\ref{eq:define_M}) has size less than the claimed bound. The proof for the first two properties is identical to that presented in \cite[Lemma 2.7]{BST}.

    For the third property, if we arbitrarily choose $n_1$ and $m_2$, the number of choices of $n_2$ and $m_1$ is at most
    \begin{align*}
        \sum_{|t - n_1m_2| \leq x^2/(\log x)^{A+1}} \tau(t) & \ll x^2 / (\log x)^A. 
    \end{align*}
    Hence the total possible choices are $\ll x^4 / (\log x)^A.$
    
    Finally, it suffices to estimate the size of pairs satisfying the second property that fail the last property. If $\gcd(g(m_1,n_1), m_1^k n_1^{d-l}) > x_2$, then it follows Lemma \ref{Lemma-1.3} that 
    $$\gcd(m_1,n_1)^d \gcd(c_0, m_1)^k \gcd(c_d, n_1)^{d-l} > x_2.$$
    Since we assume that $\gcd(m_1, n_1) < (\log x)^A$, we have
    $$\gcd(c_0, m_1)^k \gcd(c_d, n_1)^{d-l} > x_2 / (\log x)^{Ad}.$$
    This implies that
    $$\gcd(c_0, m_1) \cdot \gcd(c_d, n_1) > x_2^{1/d} / (\log x)^{A}.$$
    Therefore, by Lemma \ref{Lemma-1.4} and the assumption $0 < |c_0|, |c_d| \leq x^A$, the number of tuples satisfying the second property but failing the last property is
    \begin{align*}
       & \leq x^2 \sum_{\substack{a | c_0, b | c_d \\ {x_2}^{1/d} / (\log x)^{A} < a b \leq x}} \ \sum_{\substack{m_1, n_1 \leq x \\ a | m_1, b | n_1}} 1 \\
       & \leq  x^4 \, \sum_{a | c_0} \frac{1}{a} \sum_{\substack{b | c_d \\ a b > {x_2}^{1/d}(\log x)^{-A}}} \frac{1}{b} \\
       & \ll x^4 \, \sum_{a | c_0} \frac{1}{a} \Big( \frac{{x_2}^{1/d}}{a (\log x)^A} \Big)^{-1/2} \exp \Big( \frac{4(\log (3 x^A))^{1/2}}{(\log \log (3 x^A))^{3/2}} \Big) \\
       & \ll x^4 \Big( \frac{{x_2}^{1/d}}{(\log x)^A} \Big)^{-1/2} \exp \Big( \frac{8(\log (3 x^A))^{1/2}}{(\log \log (3 x^A)^{3/2}} \Big) 
       \\
       & \ll x^4 (\log x)^{A/2} \exp \Big( \frac{8 \sqrt{A}(\log x)^{1/2}}{(\log \log x)^{3/2}}  - \frac{1}{4d} \frac{(\log x)^{1/2}}{\log \log x}\Big).\\
       & \ll x^4 (\log x)^{-A}.
    \end{align*}
\end{proof}

We would like to recall the fact
\begin{align}\label{sub-tau}
    \tau_B(mn) \leq \tau_B(m) \tau_B(n),
\end{align}
and the standard upper bound
\begin{align}\label{sum-tau}
    \sum_{r \leq R} \tau_B(r)^k \ll R(\log R)^{B^k - 1},
\end{align}
for any $B \geq 1$ and $k \geq 1$.
We will also apply \cite[Lemma 2.8]{BST} quite often, which has shown that
\begin{align}\label{BST-2.8}
	\sum_{y \leq n \leq x + y} \tau_B(rn + a)^k \ll \tau_B(\gcd(a, r))^k x (\log x)^{B^k - 1} \log \log x,
\end{align}
for fixed $k \geq 1$, $A, B \geq 1, x \geq 3, y \leq x^A$ and $1 \leq a, r \leq x^A$.

\subsection{Proof of Theorem \ref{prop_general}}

Let $x' \in [x/2, x]$ and $g \in \ZZ[x,y]$ be chosen so that 
\begin{eqnarray*}
	 \sum_{|a|,|b| \leq H} \Big| \sum_{\substack{1 \leq m, n \leq x'}} \alpha_{m,n} F(am^kn^{d-k} + bm^ln^{d-l} + g(m, n)) \Big|^2
\end{eqnarray*}
is maximized. 
Opening up the square and recalling that $|\alpha_{m, n}| \leq 1$, it is sufficient to show that 
\begin{align}\label{claim 1}
	\sum_{\substack{1 \leq m_i, n_i \leq x' \\ i = 1, 2}} \alpha_{m_1, n_1} \overline{\alpha}_{m_2, n_2} \sum_{\substack{|a|, |b| \leq H}} F(h_{a,b}(m_1, n_1)) \overline{F}(h_{a,b}(m_2,n_2))  \ll H^2 x^4 (\log x)^{-A}.
\end{align}
Here
\begin{eqnarray*}
	h_{a,b}(m, n) = am^kn^{d-k} + bm^ln^{d-l} + g(m, n).
\end{eqnarray*}

Let $\mathcal{M} = \mathcal{M}_{3A}$, where $\mathcal{M}_{3A}$ is defined in (\ref{eq:define_M}). We denote
\begin{align}\label{Def-Delta}
    \Delta = (n_1m_2)^{l-k} - (n_2m_1)^{l-k}.
\end{align}
We consider separately whether $(\mm, \nn) \in \mathcal{N}$, or $(\mm, \nn) \in \mathcal{N}^c = \{(\mm, \nn) \in ([1, x'] \cap \NN)^4\} \setminus \mathcal{N}$, where 
\begin{align*}
	\mathcal{N} & = \{ (\mm,\nn) \in ([1, x'] \cap \ZZ)^4: \exists \, q \in \mathcal{Q} \text{ s.t. } q | \Delta m_1^k n_1^{d-l} \} \cap \mathcal{M}, \\
	\mathcal{N}^c & = \{(\mm, \nn) \in ([1, x'] \cap \ZZ)^4\} \setminus \mathcal{N}.
\end{align*}

Let $v_1 = n_1m_2$ and $v_2 = n_2m_1$. By Lemma \ref{Lemma-1.2}, (\ref{sum-tau}), Cauchy--Schwarz and the assumption $\sum_{q \in \mathcal{Q}} q^{-1/(4d)} \ll (\log x)^{-3(A+1)}$, it follows that
\begin{align*}
   & \ \# \{ (\mm,\nn) \in (\NN \cap [1, x'])^4: \exists \, q \in \mathcal{Q} \text{ s.t. } q | \Delta m_1^k n_1^{d-l} \} \\
   \leq & \ \# \{ (\mm,\nn) \in (\NN \cap [1, x'])^4: \exists \, q \in \mathcal{Q} \text{ s.t. } q | \Delta v_2^k v_1^{d-l} \} \\
   \leq & \ \sum_{q \in \mathcal{Q}} \sum_{\substack{v_1, v_2 \leq x^2 \\ q | (v_1^{l-k} - v_2^{l-k}) v_2^k v_1^{d-l} }}  \tau(v_1) \tau(v_2)  \\
   \leq & \ \sum_{q \in \mathcal{Q}} \Big( \sum_{\substack{v_1, v_2 \leq x^2}} \tau(v_1)^2 \tau(v_2)^2 \Big)^{1/2}
   \Big(\sum_{\substack{v_1, v_2 \leq x^2 \\ q | (v_1^{l-k} - v_2^{l-k})v_2^k v_1^{d-l}}} 1 \Big)^{1/2} \\
   \ll & \ \sum_{q \in \mathcal{Q}} \frac{x^4 \log^3 x}{q^{1/{(4d)}}} \ll  x^4(\log x)^{-3A}.
\end{align*}
Combining with Lemma \ref{Lemma-1.5}, we may conclude that
\begin{align}\label{est-N^c}
    \# \mathcal{N}^c \ll x^4 (\log x)^{-3A}.
\end{align}

\subsubsection*{Contribution of $(\mm, \nn) \in \mathcal{N}^c$}

It follows from assumption (1) that the left hand side of (\ref{claim 1}) is 
\begin{align*}
	\ll \sum_{(\mm, \nn) \in \mathcal{N}^c} \sum_{|a|, |b| \leq H} \tau_B(h_{a, b}(m_1, n_1)) \tau_B(h_{a, b}(m_2, n_2)).
\end{align*}
By Cauchy--Schwarz, this is 
\begin{align}\label{Nc}
	\ll (\# \mathcal{N}^c)^{1/2} \, \Big( \sum_{\substack{ |a|, |b| \leq H \\ 1 \leq m_1, m_2, n_1, n_2 \leq x}} \tau_B(h_{a, b}(m_1, n_1))^2 \tau_B(h_{a, b}(m_2, n_2))^2 \Big)^{1/2}.
\end{align}
From (\ref{sub-tau}), (\ref{sum-tau}), (\ref{BST-2.8}), and Cauchy--Schwarz, we know
\begin{align*}
	& \sum_{\substack{ |a|, |b| \leq H \\ 1 \leq m_1, m_2, n_1, n_2 \leq x}} \tau_B(h_{a, b}(m_1, n_1))^2 \tau_B(h_{a, b}(m_2, n_2))^2 \\
	= &  
 \sum_{\substack{ |a| \leq H \\ 1 \leq m_1, m_2, n_1, n_2 \leq x}} \sum_{|b| \leq H} \tau_B(h_{a, b}(m_1, n_1))^2 \tau_B(h_{a, b}(m_2, n_2))^2 \\
	\leq & \sum_{\substack{ |a| \leq H \\ 1 \leq m_1, m_2, n_1, n_2 \leq x}} \Big( \sum_{|b| \leq H} \tau_B(h_{a, b}(m_1, n_1))^4 \Big)^{1/2} \Big( \sum_{|b| \leq H} \tau_B(h_{a, b}(m_2, n_2))^4 \Big)^{1/2} \\
	\ll & \sum_{\substack{ |a| \leq H \\ 1 \leq m_1, m_2, n_1, n_2 \leq x}} \Big( H (\log H)^{B^4} \tau_B(m_1^l n_1^{d-l})^4  \Big)^{1/2} \Big( H (\log H)^{B^4} \tau_B(m_2^l n_2^{d-l})^4 \Big)^{1/2} \\
	\ll & \  H^2 (\log H)^{B^4} \sum_{\substack{ 1 \leq m_1, m_2, n_1, n_2 \leq x}} \tau_B(m_1^l n_1^{d-l})^2 \tau_B(m_2^l n_2^{d-l})^2 \\
	\ll & \ H^2 (\log H)^{B^4} \Big(\sum_{1 \leq m \leq x} \tau_B(m)^{2d} \Big)^4  \\
	\ll & \ H^2 (\log H)^{B^4} x^4 (\log x)^{4 B^{2d}} \\
	\ll & \ H^2 x^4 (\log x)^{100B^4 + 4B^{2d}}.
\end{align*}
Therefore, by (\ref{est-N^c}) we know that  (\ref{Nc}) is $\ll H x^4 (\log x)^{-A}$, since $50 B^4 + 2B^{2d} \leq 100B^{2d + 4} \leq A/2$ by assumption.

\subsubsection*{Contribution of $(\mm,\nn) \in \mathcal{N}$}

By the triangle inequality and assumption (1), the left hand side of (\ref{claim 1}) is 
\begin{align}\label{claim 2}
     \ll \sum_{(\mm, \nn) \in \mathcal{N}} \sum_{u_2 \in \ZZ} \tau_B(u_2) \Big| \sum_{u_1 \in \ZZ} F(u_1) \gamma(\uu)  \Big|,
\end{align}
where
\begin{align*}
	\gamma(\uu) = \#\{ (a,b) \in (\ZZ \cap [-H,H])^2: u_i = h_{a,b}(m_i, n_i), i = 1,2 \}.
\end{align*}
We make the change of variables $u_i' = (u_i - g(m_i, n_i))/(m_i^{k}n_i^{d-l})$ to write this as
\begin{align*}
	\sum_{(\mm, \nn) \in \mathcal{N}} \sum_{u_2' \in \ZZ} \tau_B(u_2' m_2^k n_2^{d-l} + g(m_2,n_2)) \Big| \sum_{u_1' \in \ZZ} F(u_1'm_1^kn_1^{d-l} + g(m_1,n_1))  \gamma'(\uu')  \Big|,
\end{align*}
where
\begin{align}\label{eqn-uu'}
	\gamma'(\uu') = \#\{ (a,b) \in (\ZZ \cap [-H,H])^2: u_i' = a n_i^{l-k} + b m_i^{l-k}, i = 1,2 \}.
\end{align}

From (\ref{Def-Delta}), we have
$$|\Delta| \geq |n_1m_2 - n_2m_1| \cdot \min(n_1m_2, n_2m_1)^{l-k-1}.$$
When $(\mm,\nn) \in \mathcal{N}$, it follows (\ref{eq:define_M}) that
\begin{align}\label{est-Delta}
	\frac{x^{2(l-k)}}{(\log x)^{6A(l-k)}} \ll |\Delta| \ll x^{2(l-k)} .
\end{align}
The solution of (\ref{eqn-uu'}) is $a = \Delta^{-1}(m_2^{l-k}u_1' - m_1^{l-k}u_2')$ and $b = \Delta^{-1}(-n_2^{l-k}u_1' + n_1^{l-k}u_2').$ Hence we require
\begin{align}\label{eqn-u1'}
    m_2^{l-k}u_1' \equiv m_1^{l-k}u_2' \ (\mod \Delta), \quad n_2^{l-k}u_1' \equiv n_1^{l-k}u_2' \ (\mod \Delta).
\end{align}
This system has an integer solution if and only if
\begin{align}\label{cri-u1'}
    \gcd(m_2^{l-k}, \Delta) \, \big| \, m_1^{l-k}u_2', \quad \gcd(n_2^{l-k}, \Delta) \, \big| \, n_1^{l-k}u_2'.
\end{align}
In this case, the equation (\ref{eqn-u1'}) has a unique integer solution $u_1' \equiv u_1''$ modulo $\Delta_0$, where
\begin{align*}
    \Delta_0 = \lcm \Big( \frac{\Delta}{\gcd(m_2^{l-k}, \Delta)}, \frac{\Delta}{\gcd(n_2^{l-k}, \Delta)} \Big).
\end{align*}
Note that
\begin{align*}
    \gcd(n_2^{l-k}, \Delta) = \gcd(n_2^{l-k}, (m_2n_1)^{l-k}) \leq \big( \gcd(n_2, m_2) \gcd(n_2,n_1) \big)^{l-k} \leq (\log x)^{6A(l-k)},
\end{align*}
by (\ref{eq:define_M}). Hence by (\ref{est-Delta}) we have
\begin{align}\label{est-Delta0}
    \frac{x^{2(l-k)}}{(\log x)^{12A(l-k)}} \ll \Delta_0 \ll x^{2(l-k)} .
\end{align}

The condition $|a|, |b| \leq H$ is equivalent to
\begin{align*}
   |m_2^{l-k}u_1' - m_1^{l-k}u_2'| \leq |\Delta| H, \quad |-n_2^{l-k}u_1' + n_1^{l-k}u_2'| \leq |\Delta| H.
\end{align*}
For fixed $(\mm, \nn, u_2')$, we require that $u_1'$ locates in some interval $J(\mm,\nn, u_2')$ with
\begin{align}\label{est-J}
    |J(\mm,\nn, u_2')| \leq \frac{2 |\Delta| H}{\max (m_2, n_2)^{l-k}} \ll H x^{l-k}(\log x)^{3A(l-k)} .
\end{align}
By (\ref{eqn-uu'}), we also need $|u_2'| \leq (m_2^{l-k} + n_2^{l-k})H$.

We may now conclude that (\ref{claim 2}) is 
\begin{align}\label{want2bound}
    \leq \sum_{(\mm,\nn) \in \mathcal{N}} \sum_{|u_2'| \leq (m_2^{l-k} + n_2^{l-k}) H} \tau_B( u_2' m_2^k n_2^{d-l} + g(m_2,n_2)) | S_1 |,
\end{align}
where
\begin{align*}
    S_1 & = \sum_{\substack{u_1' \in J(\mm,\nn,u_2') \\ u_1' \equiv u_1'' \, \mod \Delta_0}} F\big(u_1' m_1^k n_1^{d-l} + g(m_1, n_1)\big),
\end{align*}
if (\ref{cri-u1'}) holds, and $S_1 = 0$ if it does not. Here $u_1' \equiv u_1''$ is the unique (if possible) solution for (\ref{eqn-u1'}) modulo $\Delta_0$.
We may rewrite
\begin{align*}
    S_1 = \sum_{\substack{u \in I \\ u \equiv v \, \mod q}} F(u),
\end{align*}
with modulus $q = \lcm(\Delta_0, m_1^k n_1^{d-l})$ and interval length
$|I| = m_1^k n_1^{d-l} J(\mm,\nn, u_2')$.
It follows that
\begin{align}\label{x_1=x_2^2}
    \gcd(v, q) & \leq \gcd(g(m_1,n_1), m_1^k n_1^{d-l}) \gcd(u_1'' m_1^k n_1^{d-l} + g(m_1,n_1), \Delta_0).
\end{align}
By definition (\ref{eq:define_M}), we see that
\begin{align}\label{2x_2}
    \gcd(g(m_1,n_1), m_1^k n_1^{d-l}) < x_2 = \exp(\sqrt{\log x}/2\log\log x)
\end{align}

We will now estimate (\ref{want2bound}) in three different subcases.

\subsubsection*{Subcase I}
 $\gcd(u_1'' m_1^k n_1^{d-l} + g(m_1,n_1), \Delta_0) \leq x_2$ and $|I| > H^{1 - \epsilon}x^{l-k}$. 

In this subcase, the assumption (2) of Theorem \ref{prop_general} is now satisfied, according to (\ref{x_1=x_2^2}) and (\ref{2x_2}). We have
$$|S_1| \ll \frac{|I|}{q (\log H)^{40 Ad^2}} = \frac{\gcd(\Delta_0, m_1^k n_1^{d-l}) J(\mm, \nn, u_2')}{\Delta_0 \, (\log H)^{40Ad^2}}.$$
It follows from (\ref{eq:define_M}) that
\begin{align*}
    \gcd(\Delta_0, m_1^k n_1^{d-l}) & \leq \gcd(\Delta, (m_1n_1)^d) \\
     & \leq \gcd((m_2n_2)^{d(l-k)}, (m_1n_1)^d) \\
    &  \leq \gcd \big(m_2n_2, m_1n_1\big)^{d^2} \\
    &  \ll (\log x)^{12Ad^2}.
\end{align*}
Hence (\ref{est-Delta0}) and (\ref{est-J}) yields that 
\begin{align*}
    |S_1| \ll \frac{H x^{l-k} (\log x)^{12Ad^2 + 15A(l-k)}}{x^{2(l-k)}  (\log H)^{40Ad^2}} \leq \frac{H}{x^{l-k} (\log H)^{12Ad^2}}.
\end{align*}
By (\ref{sub-tau}), (\ref{sum-tau}) and (\ref{BST-2.8}), we know (\ref{want2bound}) is 
\begin{align*}
	& \ll \sum_{m_1,m_2,n_1,n_2 \leq x} \ \sum_{|u_2'| \leq (m_2^{l-k} + n_2^{l-k}) H} \tau_B( u_2' m_2^k n_2^{d-l} + g(m_2,n_2)) \frac{H}{x^{l-k} (\log H)^{12Ad^2}} \\
	& \ll \sum_{m_1,m_2,n_1,n_2 \leq x} \tau_B(m_2^k n_2^{d-l}) H^2 (\log H)^{B - 12Ad^2} \\
        & \ll x^2 H^2 (\log H)^{B - 12 Ad^2} \sum_{m_2 \leq x} \tau_B(m_2)^k \sum_{n_2 \leq x} \tau_B(n_2)^{d-l} \\
        & \ll x^4 H^2 (\log H)^{B + B^k + B^{d-l} - 12 Ad^2}.
\end{align*}
As $B+B^k+B^{d-l} \leq 3B^d \leq 2A$ and $(\log H)^{-1} \leq (2d \log x)^{-1}$ by assumption, this shows that (\ref{want2bound}) is $\ll x^4H^2 (\log x)^{-A}$.

\subsubsection*{Subcase II}
 $\gcd(u_1'' m_1^k n_1^{d-l} + g(m_1,n_1), \Delta_0) \leq x_2$ and $|I| \leq H^{1 - \epsilon} x^{l-k}$.

We apply the crude bound $|F(n)| \leq \tau_B(|n|) \ll (Hx^d)^{\epsilon/10}$. This leads to the estimate
\begin{align*}
	|S_1| \ll (H x^d)^{\epsilon/10} \Big( \frac{|I|}{q} + 1 \Big).
\end{align*}
We use the assumption $|I| \leq H^{1 - \epsilon} x^{l-k}$, the fact $q \geq \Delta_0$ and (\ref{est-Delta0}) to deduce that
\begin{align*}
	 \frac{(H x^d)^{\epsilon/10} \, |I|}{q} \ll \frac{H^{1 - 0.7 \epsilon}x^{l-k}}{\Delta_0} \ll \frac{H}{x^{l-k}(\log H)^{12Ad^2}}.
\end{align*}
According to the assumption $x \leq H^{1/(2d)}$ in Theorem \ref{prop_general}, we get
\begin{align*}
	|S_1| \ll \frac{H}{x^{l-k}(\log H)^{12Ad^2}} + (H x^d)^{\epsilon/10} \ll \frac{H}{x^{l-k}(\log H)^{12Ad^2}}.
\end{align*}
The remaining part is similar to subcase I.

\subsubsection*{Subcase III}
 $\gcd(u_1'' m_1^k n_1^{d-l} + g(m_1,n_1), \Delta_0) > x_2$.

Let $u_1' = u' \Delta_0 + u_1''$. We rewrite
$$S_1 = \sum_{u' \in J'} F(u' \Delta_0 m_1^k n_1^{d-l} + u_1'' m_1^k n_1^{d-l} + g(m_1,n_1)),$$
where 
$$|J'| \ll |J| / \Delta_0 \ll H x^{k-l} 
(\log x)^{15A(l-k)},$$
by (\ref{est-Delta0}) and (\ref{est-J}).
Note that $(\ref{BST-2.8})$ along with the assumptions $|F(n)| \leq \tau_B(|n|)$ and $\log H \leq (\log x)^{100}$ allow us to bound 
\begin{align*}
    S_1  & \ll \tau_B(\Delta_0 m_1^k n_1^{d-l}) |J'| (\log |J'|)^B \\
    & \ll H x^{k-l} (\log x)^{15A(l-k) + 101B} \tau_B(\Delta_0 m_1^k n_1^{d-l}).
\end{align*}

For brevity, we denote
$$\sump_{u_2'}  = \sum_{\substack{|u_2'| \leq (m_2^{l-k} + n_2^{l-k}) H \\ \gcd(u_1'' m_1^k n_1^{d-l} + g(m_1,n_1), \Delta_0) > x_2}}.$$
We may bound (\ref{want2bound}) by
\begin{align*}
\ll H x^{k-l} (\log x)^{15A(l-k)+101B}  \sum_{(\mm,\nn) \in \mathcal{N}} \sump_{u_2'} \tau_B (u_2' m_2^k n_2^{d-l} + g(m_2,n_2)) \tau_B(\Delta_0 m_1^kn_1^{d-l}).
\end{align*}
By applying Cauchy--Schwarz, this is
\begin{align}\label{sc3-1}
    \leq H x^{k-l} (\log x)^{15A(l-k)+101B} S_2^{1/2} S_3^{1/2},
\end{align}
where
\begin{align*}
    S_2  = \sum_{(\mm,\nn)\in \mathcal{N}} \sum_{\substack{|u_2'| \leq (m_2^{l-k} + n_2^{l-k}) H}} \tau_B (u_2' m_2^k n_2^{d-l} + g(m_2,n_2))^2 \tau_B(\Delta_0 m_1^kn_1^{d-l})^2,
\end{align*}
and
\begin{align*}
    S_3 = \sum_{(\mm,\nn) \in \mathcal{N}} \sump_{u_2'} \, 1.
\end{align*}

It follows from (\ref{eqn-u1'}) that
\begin{align*}
	\gcd(u_1'' m_1^k n_1^{d-l} + g(m_1,n_1), \Delta_0)  & \leq \, \gcd(n_2^{l-k} u_1'' m_1^k n_1^{d-l} + n_2^{l-k} g(m_1,n_1), \Delta) \\
	& = \, \gcd(u_2' m_1^kn_1^{d-k} + n_2^{l-k} g(m_1,n_1), \Delta).
\end{align*}
Hence
$$S_3 \leq \sum_{(\mm,\nn) \in \mathcal{N}} \sum_{\substack{|u_2'| \leq (m_2^{l-k} + n_2^{l-k}) H \\ \gcd(u_2' m_1^kn_1^{d-k} + n_2^{l-k} g(m_1,n_1), \Delta) > x_2}} 1.$$
By (\ref{eq:define_M}), we know
\begin{align*}
	\gcd(m_1^kn_1^{d-k}, \Delta) & \leq \ \gcd(m_1^k, (m_2n_1)^{l-k}) \gcd(n_1^{d-k}, (m_1n_2)^{l-k}) \\
	& \ll \, \big( \gcd(m_1, m_2) \gcd(m_1, n_1) \gcd(n_1, m_1) \gcd(n_1, n_2) \big) ^{d^2}  \\
	& \ll \, (\log x)^{12Ad^2}.
\end{align*}
As a consequence of \cite[Lemma 2.6]{BST} and the fact $\tau(\Delta) \ll |\Delta|^{0.1/(l-k)} \ll x^{0.2}$, it follows that
\begin{align}\label{sc3-2}
    S_3 \ll & \ x^4 \left( \frac{x^{l-k} H (\log x)^{12Ad^2}}{x_2^{1/2}} \exp\Big( \frac{4(\log (3|\Delta|))^{1/2}}{(\log\log(3|\Delta|))^{3/2}} \Big) + \tau(\Delta) \right) \nonumber \\ 
     \ll & \ \frac{x^{4 +l-k} H (\log x)^{12Ad^2}}{x_2^{1/2}} \exp\Big( \frac{8 d^{1/2} (\log (3x))^{1/2}}{(\log\log x)^{3/2}} \Big) + x^{4.2}  \\ 
     \ll & \ \frac{x^{4+l-k} H}{\exp((\log x)^{0.49})}. \nonumber 
\end{align}

Now (\ref{sub-tau}), (\ref{BST-2.8}) and Cauchy--Schwarz imply that 
\begin{align*}
    S_2 & \ll x^{l-k} H (\log H)^B \sum_{(\mm,\nn) \in \mathcal{N}} \tau_B(\Delta)^2 \tau_B(m_1^k)^2 \tau_B(m_2^k)^2 \tau_B(n_1^{d-l})^2 \tau_B(n_2^{d-l})^2 \\
    & \ll x^{l-k} H (\log x)^{100B} \Big( \sum_{\substack{m_1,m_2,n_1,n_2 \leq x \\ m_1n_2 \neq m_2n_1}} \tau_B(\Delta)^4 \Big)^{1/2} \Big( \sum_{m \leq x} \tau_B(m)^{4d} \Big)^{2}. 
\end{align*}
Following from (\ref{sum-tau}), \cite[Lemma 2.9]{BST} and Cauchy--Schwarz, we obtain that
\begin{align*}
    \sum_{\substack{m_1,m_2,n_1,n_2 \leq x \\ m_1n_2 \neq m_2n_1}} \tau_B(\Delta)^4 & \ll \sum_{1 \leq t_1 < t_2 \leq x^2} \tau_B(t_1^{l-k} - t_2^{l-k})^4 \tau(t_1) \tau(t_2) \\
    & \ll \Big( \sum_{1 \leq t_1 < t_2 \leq x^2} \tau_B(t_1^{l-k} - t_2^{l-k})^8 \Big)^{1/2} \sum_{t \leq x^2} \tau(t)^2 \\
    & \ll x^4 (\log x)^{dB^8/2 + 3}
\end{align*}
Combine with (\ref{sum-tau}), we know
\begin{align*}
    S_2 \ll x^{4+l-k} H (\log x)^{100B + dB^8/4 + 3/2 + 2 B^{4d}}.
\end{align*}
Therefore, along with (\ref{sc3-1}) and (\ref{sc3-2}) the last subcase is crudely bounded by $\ll x^4 H^2 (\log x)^{-A}$.

\section{Chowla and Bateman--Horn for random binary forms}

Here we recall the key results by Browning, Sofos and Ter\"av\"ainen \cite[Proposition 3.1, Proposition 3.2]{BST} on the Liouville function and the von Mangoldt function in short intervals and progressions.

\begin{Prop}\label{prop_Liouville}
	Let $\eps > 0$ be small but fixed and let $A \geq 1$ be fixed. Let $x \geq x_0(A, \eps)$. Let $c_0 = 5/24$. Then there exists a set of prime powers $\mathcal{Q} \subset [(\log x)^A, x]$ of integers satisfying 
	\begin{equation}\label{smallQ-Liou}
		\sum_{q \in \mathcal{Q}} \frac{1}{q^{\eps}} \ll (\log x)^{-30A / \eps},
	\end{equation}
	such that for any integer $1 \leq q \leq x^{c_0}$ that is not a multiple of an element of $\mathcal{Q}$, we have 
	\begin{equation}
		\max_{\substack{1 \leq a \leq q \\ \gcd(a, q) \leq x^{\eps}}} \sup_{\substack{I \subset [1, x] \\ |I| \geq x^{1-c_0+2\eps} }} \Bigg| \sum_{\substack{n \in I \\ n \equiv a \, \mod q}} \lambda(n) \Bigg| \ll \frac{|I|}{q (\log x)^A}.
	\end{equation}
\end{Prop}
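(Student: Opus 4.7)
Since this proposition is quoted verbatim from \cite[Proposition 3.1]{BST}, the cleanest proof is to invoke that reference directly. If reconstructing the argument from scratch, the plan would be to combine the Matom\"aki--Radziwi\l\l--Tao short-interval cancellation for $\lambda$ with orthogonality over Dirichlet characters modulo $q$, routing the exceptional behaviour into the sparse set $\mathcal{Q}$ via a log-free zero density estimate.

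First I would expand the progression indicator via characters. Setting $d = \gcd(a,q)$, the hypothesis $d \leq x^\eps$ keeps the contribution of non-coprime residues under control: using the complete multiplicativity of $\lambda$, one writes
$$\sum_{\substack{n \in I \\ n \equiv a \, (\mod q)}} \lambda(n) = \frac{\lambda(d)}{\phi(q/d)} \sum_{\chi \, \mathrm{mod}\, q/d} \overline{\chi}(a/d) \sum_{\substack{m \in I/d \\ \gcd(m, q/d)=1}} \lambda(m) \chi(m) + \text{(error)}.$$
The principal character contribution is then estimated using the short-interval Liouville cancellation theorem of Matom\"aki--Radziwi\l\l--Tao (valid since $|I|/d \gg x^{1-c_0+\eps}$), while the non-principal characters are handled via zero-density and log-free estimates for the Dirichlet $L$-functions $L(s,\chi)$. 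The prime powers whose reductions support exceptional (Siegel-type) characters are gathered into $\mathcal{Q}$; a standard log-free zero density bound then yields the sparsity bound $\sum_{q \in \mathcal{Q}} q^{-\eps} \ll (\log x)^{-30A/\eps}$.

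The main technical obstacle is enforcing the estimate uniformly in $q$ (rather than on average, as in Bombieri--Vinogradov), which is what forces the explicit isolation of the exceptional set $\mathcal{Q}$ and requires a quantitative form of the short-interval cancellation with power-of-log savings $(\log x)^{-A}$. The precise exponent $c_0 = 5/24$ traces back to the interplay between the Matom\"aki--Radziwi\l\l short-interval machinery and the best available density estimates for zeros of $L$-functions in the relevant strip: it is the threshold at which these two inputs optimally balance, pairing $q \leq x^{c_0}$ against $|I| \geq x^{1-c_0+2\eps}$ while still producing cancellation with savings by an arbitrary power of $\log x$.
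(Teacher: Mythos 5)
Your observation that this proposition is stated verbatim from \cite[Proposition~3.1]{BST} and should simply be cited is exactly what the paper does: no proof is given here, and the result is invoked directly from that reference. Your supplementary sketch of the BST argument (Matom\"aki--Radziwi\l\l-type short-interval cancellation combined with character orthogonality and zero-density estimates, with the exceptional set $\mathcal{Q}$ absorbing Siegel-type characters and the exponent $5/24$ arising from the zero-density threshold) is a reasonable outline of how the cited reference proceeds, but for the purposes of this paper the citation alone suffices.
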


\begin{Prop}\label{prop_Lambda}
	Let $\eps > 0$ be small but fixed and let $A \geq 1$ be fixed. Let $x \geq x_0(A, \eps)$. Let $c_0 = 5/24$. Then there exists a set of prime powers $\mathcal{Q} \subset [(\log x)^A, x]$ of integers satisfying (\ref{smallQ-Liou}), such that for any integer $1 \leq q \leq x^{c_0}$ that is not a multiple of an element of $\mathcal{Q}$, we have 
	\begin{equation}
		\max_{\substack{1 \leq a \leq q \\ \gcd(a, q) \leq x^{\eps}}} \sup_{\substack{I \subset [1, x] \\ |I| \geq x^{1-c_0+2\eps} }} \Bigg| \sum_{\substack{n \in I \\ n \equiv a \, \mod q}} \Lambda(n) - \mathbf{1}_{\gcd(a,q) = 1} \frac{x}{\varphi(q)} \Bigg| \ll \frac{|I|}{q (\log x)^A}.
	\end{equation}
\end{Prop}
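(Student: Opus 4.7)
Since this is Proposition \ref{prop_Lambda} and it is recalled verbatim from \cite[Proposition 3.2]{BST}, the proof is by citation to that source. Nevertheless, here is a sketch of the underlying argument. The plan is to establish a short-interval Bombieri--Vinogradov-type theorem for $\Lambda$ with a sparse set of excluded moduli, valid up to level $c_0 = 5/24$ in intervals of length at least $x^{19/24 + 2\eps}$. I would begin by applying Heath--Brown's identity of a fixed order $k$ (taking $k = 4$ or $5$) to write
\[
\Lambda(n) = \sum_{j=1}^{k} (-1)^{j-1} \binom{k}{j} \sum_{\substack{n = m_1 \cdots m_j n_1 \cdots n_j \\ m_i \leq x^{1/k}}} \mu(m_1) \cdots \mu(m_j) \log n_1,
\]
and then decompose each variable dyadically.

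Each dyadic piece is classified as Type I (some partial product forms a long ``smooth'' variable of length $> x^{1 - c_0 - \eps}$) or Type II (all partial products lie in $[x^{c_0 + \eps}, x^{1 - c_0 - \eps}]$). Type I pieces are handled via Perron's formula and Siegel--Walfisz-type estimates for Dirichlet $L$-functions with characters of small conductor; the exceptional moduli associated to potential Siegel zeros on $[(\log x)^A, x]$ are collected into $\mathcal{Q}$, and the Landau--Page theorem provides the sparsity compatible with \eqref{smallQ-Liou}. Type II pieces are treated by a bilinear decomposition combined with the large sieve inequality and mean-value bounds for Dirichlet polynomials; the hypothesis $\gcd(a, q) \leq x^\eps$ prevents degeneration of the underlying character sums onto sparse residue classes after M\"obius decomposition into primitive characters.

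The main obstacle is the Type II bilinear estimate at the borderline exponent $c_0 = 5/24$: to reach this level of distribution while keeping the interval as short as $x^{19/24 + \eps}$, one must exploit both bilinearity and moment bounds for Dirichlet $L$-functions off the critical line, along the lines of Huxley, Heath--Brown and Jutila. The main term $\mathbf{1}_{\gcd(a, q) = 1} \, x / \varphi(q)$ in the statement arises from the contribution of the principal character (the residue of the relevant $L$-function at $s = 1$), while the error saving $(\log x)^{-A}$ comes from Siegel--Walfisz cancellation over non-principal characters of small conductor together with the Type II bilinear gain. Structurally the Liouville analogue in Proposition \ref{prop_Liouville} is proved by an identical scheme, replacing the Heath--Brown decomposition of $\Lambda$ with the multiplicative identity $\lambda = \mu \ast \mathbf{1}_{\square}$.
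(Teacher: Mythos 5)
You correctly observe that the paper states this proposition without proof, citing \cite[Proposition 3.2]{BST} directly, so there is no in-paper argument to compare your sketch against; your high-level account (Heath--Brown identity, Type~I/Type~II decomposition, Siegel zeros collected into $\mathcal{Q}$ via Landau--Page, bilinear Type~II estimates via the large sieve and Dirichlet-polynomial moments) is a reasonable summary of the standard short-interval Bombieri--Vinogradov machinery and is consistent with how the proposition is invoked later. One small caveat: the Liouville analogue (Proposition~\ref{prop_Liouville}) is established in \cite{BST} primarily through the Matom\"aki--Radziwi\l\l--Ter\"av\"ainen method for multiplicative functions in short intervals, rather than by a direct $\lambda=\mu\ast\mathbf{1}_{\square}$ replay of the $\Lambda$ decomposition, but that point is tangential to the von Mangoldt statement at hand.
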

 
\subsection{Proof of Theorem \ref{thm-Chowla}}
We will apply Theorem \ref{prop_general} and Proposition \ref{prop_Liouville} to establish Theorem \ref{thm-Chowla}. Our goal is to prove that
\begin{eqnarray}\label{Liouville-1}
	\sum_{g \in \CCC} \Big| \sum_{u,v \leq x} \lambda (g(u,v)) \Big|^2 \ll x^4 H^2 (\log x)^{-A},
\end{eqnarray}
uniformly for $x \in [H^c, 2H^c]$, where $c = 5/(19d)$ is as in Theorem \ref{thm-Chowla}. From this, Theorem \ref{thm-Chowla} will then follow directly from Chebyshev's inequality.

Suppose that $\CCC$ has the $k$th and $l$th coordinates as its two free coordinates, with $k<l$. In order to prove (\ref{Liouville-1}), it suffice to show that
\begin{eqnarray}\label{g=0-1}
	\sum_{|a|,|b| \leq H} \Bigg| \sum_{m,n \leq x} \lambda(am^kn^{d-k} + bm^ln^{d-l} + g(m,n)) \Bigg|^2 \ll H^2 x^4 (\log x)^{-A},
\end{eqnarray}
uniformly for binary forms $g \in \ZZ[s,t]$ of degree $\leq d$ which have coefficients in $[-H,H]$ and satisfying zero $s$-coefficients at degree $k$ and $l$. We claim it suffices to prove that, for any coefficients $\alpha_{m,n} \in \CC$ such that $|\alpha_{m,n}| \leq 1$, we have
\begin{eqnarray}\label{g=0-2}
	\sum_{|a|,|b| \leq H} \Bigg| \sum_{m,n \leq x} \alpha_{m, n}\lambda(am^kn^{d-k} + bm^ln^{d-l} + g(m,n)) \Bigg|^2 \ll H^2 x^4 (\log x)^{-A},
\end{eqnarray}
uniformly for binary forms $g \in \ZZ[s,t]$ of degree $\leq d$ which have coefficients in $[-H,H]$ and satisfying the conditions $c_{d-k} = c_{d-l} = 0$ and $c_0 \cdot c_d \neq 0$, where
$$g(m, n) = c_0 m^d + c_1 m^{d-1}n + \cdots + c_{d-1}m n^{d-1} + c_d n^d.$$ 
Let $i \geq 0$ be the minimal integer such that $c_{d-i} \neq 0$, taking $i = d + 1$ if $g$ vanishes identically. Similarly let $j \leq d$ be the maximal integer such that $c_j \neq 0$, taking $j = -1$ if $g$ vanishes identically. Note that we have $d-i \geq j$.

If $i \leq k$ and $d-j \geq l$, we see that
\begin{align*}
S(H)  = \sum_{|a|,|b| \leq H} \Bigg| \sum_{m,n \leq x} \alpha_{m,n} \lambda(am^{k-i}n^{d-k-j} + bm^{l-i}n^{d-l-j} + g^{*}(m,n)) \Bigg|^2,
\end{align*}
where $\alpha_{m,n} = \lambda(m)^i \lambda(n)^j$ and $g^{*}(m,n) =  c_{j}m^{d-i-j}  + \cdots + c_{d-i}n^{d-i-j}$ with $c_j \cdot c_{d-i} \neq 0$. This is exactly of the form (\ref{g=0-2}).

If $i > k$ and $d-j \geq l$, we have
\begin{align*}
S(H) & = \sum_{|a|,|b| \leq H} \Bigg| \sum_{m,n \leq x} \alpha_{m,n} \lambda(an^{d-i-j} + bm^{l-i}n^{d-l-j} + g^{\dagger}(m,n)) \Bigg|^2 \\
& = \sum_{|a|,|b| \leq H} \Bigg| \sum_{m,n \leq x} \alpha_{m,n} \lambda\big((a+1)n^{d-i-j} + bm^{l-i}n^{d-l-j} + g^{\dagger}(m,n)\big) \Bigg|^2 + O(Hx^4),
\end{align*}
on shifting $a$ by $1$, where $\alpha_{m, n} = \lambda(m)^k \lambda(n)^j$ and 
$$g^{\dagger}(m, n) = c_j m^{d-i-k} + \dots + c_{d-i}m^{i-k}n^{d-i-j}.$$
The first term is exactly of the form (\ref{g=0-2}), with $g(m, n) = g^{\dagger}(m, n) + n^{d-i-j}$, and the error term of $O(Hx^4)$ is acceptable.
If $i \leq k$ and $d-j < l$, the argument follows similarly.

Finally, if $i > k$ and $d-j < l$, we have
\begin{align*}
S(H) & = \sum_{|a|,|b| \leq H} \Bigg| \sum_{m,n \leq x} \alpha_{m,n} \lambda(an^{d-i-j} + bm^{d-i-j} + g^{\ddagger}(m,n)) \Bigg|^2 \\
& = \sum_{|a|,|b| \leq H} \Bigg| \sum_{m,n \leq x} \alpha_{m,n} \lambda( (a+1)n^{d-i-j} + (b+1)m^{d-i-j} + g^{\ddagger}(m,n)) \Bigg|^2+ O(Hx^4),
\end{align*}
on shifting both $a$ and $b$ by $1$, where $\alpha_{m, n} = \lambda(m)^k \lambda(n)^{d-l}$, and
$$g^{\ddagger}(m, n) = c_j m^{d-j-k} n^{j-d+l} + \dots + c_{d-i}m^{i-k}n^{l-i}.$$ 
The first term is exactly of the form (\ref{g=0-2}), with $g(m, n) = g^{\ddagger}(m,n) + n^{d-i-j} + m^{d-i-j}$, and the error term of $O(Hx^4)$ is acceptable.

\begin{proof}[Proof of Theorem \ref{thm-Chowla}]
By the discussion above, it suffices to prove (\ref{g=0-1}). We now proceed to verify the two assumptions of Theorem \ref{prop_general}. Assumption (1) is clear. Assumption (2) follows from Proposition \ref{prop_Liouville} with $\eps = 1/(4d)$, provided that $x^d \leq (2Hx^d)^{c_0}$ and $(2Hx^d)^{1-c_0} \leq H^{1-\eps}$, where $c_0 = 5/24$ as in Proposition \ref{prop_Liouville}. Given that $x \leq 2H^c$ with $c < 5/(19d)$, these conditions indeed hold. Therefore, (\ref{g=0-1}) follows by applying Theorem \ref{prop_general} with $B = 1$ to the function $F(n) = \lambda(n)$.
\end{proof}

\subsection{Proof of Theorem \ref{thm-BH}}

Let 
\begin{align}\label{w-and-W}
   w = \exp(\sqrt{\log x}), \indent W = \prod_{p \leq w} p.	
\end{align}
Define a modified Cram\'er model (or a $W$-tricked model) for the von Mangoldt function by
$$\Lambda_w(n) = \frac{W}{\varphi(W)} \mathbf{1}_{\gcd(n, W) = 1}.$$
Note that $\Lambda_w(n) \leq W / \varphi(W) \leq \log x$, for any $n \in \NN$. Recall the definition of $\mathfrak{S}_{g_1, \dots, g_r}(x)$ in Theorem \ref{thm-BH}, for an $r$-tuple $g_1, \dots, g_r \in \ZZ[s, t]$.

\begin{lemma}\label{mod-Cra}
	Let $A \geq 1$ and $d, r \in \NN$ be fixed, let $x \geq 2$, and let $w$ be as in (\ref{w-and-W}). Let $g_1, \dots, g_r \in \ZZ[s,t]$ be binary forms of degree $\leq d$. Then
	\begin{align}
		\frac{1}{x^2}\sum_{m, n \leq x} \Lambda_w(g_1(m,n)) \cdots \Lambda_w(g_r(m,n)) = \mathfrak{S}_{g_1, \dots, g_r}(x) + O(\log(x)^{-A}).
	\end{align}
\end{lemma}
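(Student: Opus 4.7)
The plan is to unpack the definition of the $W$-tricked model, which reduces the lemma to a two-dimensional sieve estimate, and then invoke the fundamental lemma of the combinatorial sieve. Since $\Lambda_w(n) = (W/\varphi(W))\mathbf{1}_{\gcd(n,W)=1}$,
\[
\frac{1}{x^2}\sum_{m,n \leq x}\prod_{i=1}^{r}\Lambda_w(g_i(m,n)) = \Bigl(\frac{W}{\varphi(W)}\Bigr)^{\!r}\cdot \frac{N(x;W)}{x^2}, \qquad N(x;W) := \#\bigl\{(m,n) \in [1,x]^2: \gcd\bigl(\textstyle\prod_i g_i(m,n),W\bigr)=1\bigr\}.
\]
Because $(W/\varphi(W))^{r} = \prod_{p \leq w}(1 - 1/p)^{-r}$ is exactly the first Euler product in $\mathfrak{S}_{g_1,\dots,g_r}(x)$, the lemma reduces to
\[
\frac{N(x;W)}{x^2} = \prod_{p \leq w}\Bigl(1 - \frac{\rho_p^*}{p^2}\Bigr) + O\bigl((\log x)^{-A}\bigr), \qquad \rho_p^* := \#\{(u,v) \in \FF_p^2 : g_1(u,v)\cdots g_r(u,v) \equiv 0 \pmod{p}\}.
\]

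For squarefree $d \mid W$, the Chinese remainder theorem gives
\[
\mathcal{A}_d := \#\{(m,n) \in [1,x]^2 : d \mid g_1\cdots g_r(m,n)\} = \frac{x^2}{d^2}\prod_{p \mid d}\rho_p^* + r_d, \qquad |r_d| \ll \frac{x}{d}\prod_{p \mid d}\rho_p^*,
\]
so the sifting is governed by the multiplicative density $g(d) = d^{-2}\prod_{p \mid d}\rho_p^*$. Call a prime $p \leq w$ \emph{degenerate} if some $g_i$ has all coefficients divisible by $p$: then $\rho_p^* = p^2$, so the corresponding factor of $\mathfrak{S}_{g_1,\dots,g_r}$ vanishes, and simultaneously $p \mid g_i(m,n)$ for every $(m,n)$, forcing $N(x;W) = 0$; both sides of the target identity are then identically zero. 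Otherwise $\prod_i g_i$ is a nonzero polynomial of degree at most $rd$ modulo $p$, and the standard bound on $\FF_p$-points of plane curves yields $\rho_p^* \leq (rd)\,p$, giving sieve dimension $\kappa \leq rd$.

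I would then apply the fundamental lemma of the combinatorial sieve with sifting range $z = w = \exp(\sqrt{\log x})$ and sieve level $D = x^{1/2}$, so that the key parameter $s := \log D/\log z = \tfrac{1}{2}\sqrt{\log x}$ is very large. This yields
\[
N(x;W) = x^2 V(w)\bigl(1 + O(e^{-s \log s})\bigr) + O\Bigl(\sum_{d \mid W,\, d \leq D}\mu(d)^2 |r_d|\Bigr),
\]
with $V(w) := \prod_{p \leq w}(1 - \rho_p^*/p^2)$. The relative error $e^{-s\log s} \ll \exp(-\tfrac{1}{4}\sqrt{\log x}\,\log\log x)$ beats any negative power of $\log x$, and since $\rho_p^*/p \leq rd$,
\[
\sum_{d \mid W,\, d \leq D}\mu(d)^2 |r_d| \ll x \sum_{d \leq D}\mu(d)^2 (rd)^{\omega(d)} \ll x \cdot D \cdot (\log D)^{rd} \ll x^{3/2}(\log x)^{rd},
\]
comfortably absorbed into $x^2(\log x)^{-A}$ once $x$ is large in terms of $A, r, d$.

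The main obstacle, as I see it, is bookkeeping rather than a single deep step: one must verify the sieve axioms uniformly across all tuples $(g_1, \dots, g_r)$ (in particular the dimension bound and the clean disposal of degenerate primes), and confirm that the prefactor $(W/\varphi(W))^r = O((\log w)^r) = O((\log x)^{r/2})$ is slowly-growing enough that the relative error from the fundamental lemma translates into the absolute error $(\log x)^{-A}$ demanded by the lemma. The underlying reason the argument works with such slack is that $w = \exp(\sqrt{\log x})$ is so much smaller than $x$ that we operate well inside the level-of-distribution regime of the fundamental lemma.
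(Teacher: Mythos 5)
Your proposal follows essentially the same route as the paper: rewrite $\Lambda_w$ to reduce the lemma to a sieve count $N(x;W)$, dispose of the degenerate primes (where $\rho_p^*=p^2$ and both sides vanish), bound the sieve dimension using a point-count on binary forms over $\FF_p$, and invoke the fundamental lemma of the combinatorial sieve at sifting level $w=\exp(\sqrt{\log x})$, where the huge ratio $\log D/\log w$ makes the relative error superpolynomially small. The paper carries this out with slightly different bookkeeping (it bounds $\rho_p^*\le r(d+1)p$ via a per-factor union bound and sets $\kappa=(d+1)r$, then cites \cite[Lemma 5.1]{BST} for the remaining mechanics, rather than writing out the remainder-sum estimate as you do), but there is no substantive difference in the argument.
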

\begin{proof}
	Let $$h(p) = \mathbf{1}_{p \leq w} \frac{\#\{(u, v) \in \FF_p^2: g_1(u, v) \cdots g_r(u, v) = 0 \}}{p^2}.$$  When $h(p) = 1$ for some prime $p$, Lemma \ref{mod-Cra} holds trivially. This is because in that case both $\Lambda_w(g_1(m,n)) \cdots \Lambda_w(g_r(m,n))$ and $\mathfrak{S}_{g_1, \dots, g_r}(x)$ would be zero. In the opposite case, we know $g_1 \cdots g_r$ does not have a fixed prime divisor $p \leq w$. In particular, for each $g_i, 1 \leq i \leq r$, the content of $g_i$ is not divisible by $p$. By Lagrange Theorem, for any $1 \leq i \leq r$, we have
	\begin{align*}
		\#\{(u, v) \in \FF_p^2: g_i(u, v) = 0 \} & \leq p + \#\{(u, v) \in \FF_p^2: u \neq 0, g_i(u, v) = 0 \} \\
		& \leq p + (p-1)d  < p (d+1).
	\end{align*}
	We have
	$$h(p) \leq (d+1)rp/p^2 = (d+1)r/p,$$
	whenever $p \leq w$. It follows from Mertens' theorem that
	$$\prod_{a \leq p < b}(1 - h(p))^{-1} \leq \prod_{\substack{a \leq p < b \\ p \leq w}} \left( 1 - \frac{(d+1)r}{p} \right)^{-1} \leq \left( \frac{\log b}{\log a} \right)^{(d+1)r} \left( 1 + \frac{K_{d, r}}{\log a} \right), $$
	where $K_{d, r}$ is a constant depends on $d$ and $r$. Following the proof of \cite[Lemma 5.1]{BST}, we use the fundamental lemma of sieve theory \cite[Fundamental lemma 6.3]{Iwa-Kow} with $\kappa = (d+1)r$ to obtain the desired estimate.
\end{proof}

By Lemma \ref{mod-Cra}, the triangle inequality, Chebyshev's inequality and the induction argument in \cite[Proof of Theorem 1.4]{BST}, it suffices to prove for any coefficients $|\alpha_n| \leq 1$, we have
\begin{align}\label{BST-5.4}
	\sum_{g \in \CCC} \left| \sum_{m, n \leq x} \alpha_n(\Lambda - \Lambda_w)(g(m,n)) \right|^2 \ll \frac{x^2H^2}{(\log x)^A},
\end{align}
uniformly for $x \in [H^c, 2H^c]$. Suppose $\CCC$ has the $k$th and $l$th coordinate as its two free coordinates, with $k < l$. It then suffices to prove that
\begin{align}\label{BST-5.5}
	\sum_{|a|, |b| \leq H} \left| \alpha_n(\Lambda - \Lambda_w) (a m^{k}n^{d-k} + bm^ln^{d-l} + g(m,n)) \right|^2 \ll \frac{x^2H^2}{(\log x)^A},
\end{align}
uniformly for sequences $|\alpha_n| \leq 1$, and for binary forms $g \in \ZZ[s, t]$ of degree $\leq d$ having coefficients in $[-H, H]$, and with $g(0,1) \cdot g(1, 0) \neq 0$. The condition $g(0,1) \cdot g(1, 0) \neq 0$ can indeed be imposed, since if $k = 0$ and $g(0,1) = 0$, or $l = d$ and $g(1, 0) = 0$, we can change the value of $g(0, 1)$ or $g(1, 0)$ by $O(1)$ without changing the validity of (\ref{BST-5.5}). Alternatively, if $k > 0$ or $l < d$, the $\CCC$ consists of binary forms which are not irreducible, a case that was excluded.

\begin{proof}[Proof of Theorem \ref{thm-BH}]
	As in the last paragraph of \cite[Proof of Theorem 1.4]{BST}, we can use \cite[Fundamental lemma 6.3]{Iwa-Kow} to conclude from Proposition \ref{prop_Lambda} that
$$\left| \sum_{\substack{x \leq n \leq x + h \\ n \equiv a \, (\mod q)}} (\Lambda(n) - \Lambda_w(n)) \right| \ll \frac{h}{q(\log x)^A},$$
provided that $x \geq h \geq x^{1-5/24+\eps}, 1 \leq a \leq q \leq x^{5/24}, \gcd(a, q) \leq \exp(\sqrt{\log x})$ and $q$ is not a multiple of any element of the set $\mathcal{Q}$ present in Proposition \ref{prop_Lambda}. Note that the function $F(n) = (\Lambda(n) - \Lambda_w(n))/(\log x)$ is $1$-bounded. Hence Assumption (1) and (2) of Theorem \ref{prop_general} both hold, provided that $(2Hx^d)^{1-5/24} \leq H^{1-\eps}$ for some small $\eps>0$, which is indeed holds by the assumption that $x \leq 2H^c$ for some fixed $c < 5/(19d)$. Now (\ref{BST-5.4}) is followed by applying Theorem \ref{prop_general} to $F(n)$.

\end{proof}

\section{Norm forms: a localized counting function}

Let $K/\QQ$ be a finite extension of number fields of degree $e \geq 2$ and fix a $\ZZ$-basis $\{\omega_1, \dots, \omega_e\}$ for the ring of integers of $K$. We denote the norm form by
\begin{align*}
	\mathbf{N}_K(\mathbf{x}) = N_{K/\QQ}(x_1 \omega_1 + \cdots + x_e \omega_e),
\end{align*}
where $\mathbf{x} = (x_1, \dots, x_e)$ and $N_{K/\QQ}$ is the field norm. Without loss of generalization, we assume $\omega_1 = 1$, so we have 
\begin{align}\label{e-1}
	\NNN_K(\ee_1) = 1 \text{ for } \ee_1 = (1, 0, \dots, 0).
\end{align}

Recall the definitions (\ref{Def-S}), we further define $S^{\mathrm{loc}}_+(H)$ to be the subset of $S^{\mathrm{loc}}(H)$ such that there exists a real point $(\xx^{(\RR)}, \uu^{(\RR)} )$ such that
$$\NNN_K(\xx^{(\RR)}) = g_\cc(\uu^{(\RR)}) > 0.$$
We also denote $S^{\QQ}_+(H)$ to be subset of $S^{\QQ}(H)$ such that there exists a rational point 
$(\xx^{(\QQ)}, \uu^{(\QQ)} )$ such that
$$\NNN_K(\xx^{(\QQ)}) = g_\cc(\uu^{(\QQ)}) > 0.$$
Similarly, we can define the corresponding subsets $S^{\mathrm{loc}}_-(H)$ and $S^{\QQ}_-(H)$.

Let $\epsilon \in \{\pm\}$. If $\epsilon = +$, then we choose $\xx_1 = 2^{-1/e} \ee_1$, so that $\NNN_K(\xx_1) = 1/2$. If $\epsilon = -$ and $S^{\mathrm{loc}}_-(H)$ is non-empty, then there exists a real point $(\xx^{(\RR)}, \uu^{(\RR)} )$ such that
$\NNN_K(\xx^{(\RR)}) = g_\cc(\uu^{(\RR)}) < 0$. By homogeneity, there exists $\xx_1 \in \RR^{e}$ such that $\NNN_K(\xx_1) = -1/2$.
By Euler's identity
\begin{align}\label{euler}
	e \, \NNN_K(\xx) = \nabla \NNN_K(\xx) \cdot \xx,
\end{align}
we know $\nabla\NNN_K(\xx_1) \neq 0$.
It follows that for $\epsilon \in \{\pm\}$, there exists $\kappa = \kappa(K, \epsilon) > 0$, in the region 
\begin{equation}\label{def-reg-B}
	\mathcal{B} = \{ \mathbf{x} \in \RR^e: |\xx - \xx_1| < \kappa \},
\end{equation}
we have 
$$\frac{\pa \NNN_K}{\pa x_j}(\xx) \gg 1, \text{ for some } 1 \leq j \leq e.$$
%In particular $\mathbf{N}_K(\xx^{(\RR)}) = N_{K/Q}(\omega_1) = 1$.
For $B \geq 1$, we write $B \BB = \{B\xx : \xx \in \BB\}$.
Note that 
\begin{align}\label{bound-pa}
	\frac{\pa \NNN_K}{\pa x_j}(\xx) \gg B^{e-1}
\end{align}
throughout $B\BB$.
We shall be interested in the counting function $R_K(n; B)$ defined by
\begin{equation}\label{def-R}
R_K(n; B) = \# \{ \mathbf{x} \in \ZZ^e \cap B \mathcal{B}: \mathbf{N}_K(\mathbf{x}) = n \},
\end{equation}
for $n \in \ZZ$.

Let $x > 0$. Our principal concern is with the function 
$$N_{\cc}(x) = \sum_{\substack{-x \leq m, n \leq x \\ n \neq 0}} R_K(g_{\cc}(m,n); B),$$
as $x \rightarrow \infty$.
We shall always assume that
\begin{align}\label{x-H}
   \frac{1}{2}H^{\Delta_{d,e}} \leq  x \leq H^{\Delta_{d,e}}, \text{ with } \Delta_{d,e} = \frac{1}{2de(e+3)}.
\end{align}
We will take $B$ to be approximately $(Hx^d)^{1/e}$ and we will introduce the exact value of $B$ in (\ref{def-B}) later.

If $c_0 > 0$ then (\ref{cha-bino}) is soluble over $\RR$. From \cite[Theorem 1.4]{BBL}, we know
\begin{align}\label{Thm-BBL}
	\#S^{\mathrm{loc}}(H) \geq \left( \frac{1}{2} + o(1) \right) \prod_p \sigma_p \cdot \left( 2^{d+1} H^{d+1} + O(H^d) \right) \gg H^{d+1},
\end{align}
as $H \rightarrow \infty$, where $\sigma_p$ is the probability that the equation (\ref{cha-bino}) is solvable over $\QQ_p$, for a coefficient vector $\cc \in \ZZ_p^{d+1}$.
Hence it is sufficient to prove Theorem \ref{thm-HP} by showing the following stronger result.
\begin{prop}\label{thm-cha}
Let $d \geq 1$. Let $K/\QQ$ be a finite extension of degree $e$ dividing $d$. Suppose that conditions (\ref{e-1}) and (\ref{x-H}) hold. Then, for any $A > 0$ and any choice of $\epsilon \in \{\pm\}$, there exists a constant $C_A > 0$ such that
	$$\#\left\{\cc \in S_\epsilon^{\mathrm{loc}}(H): N_\cc(x) \leq \frac{x^2}{(\log H)^{C_A}} \right\} = O_{d,K,A} \left( \frac{ H^{d+1}}{(\log H)^{A}} \right).$$
\end{prop}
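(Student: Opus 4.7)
The plan is a second-moment / Chebyshev argument with Theorem \ref{prop_general} as the engine. I introduce a Hardy--Littlewood style main term
$$M(n; B) \,:=\, c_K\, r_K(n)\, B^{e-1} / |n|^{1 - 1/e}$$
for $R_K(n; B)$, where $r_K(n)$ denotes the number of integral ideals of $\OO_K$ of norm $|n|$, split $R_K(n; B) = M(n; B) + E(n; B)$, and take $B \asymp (H x^d)^{1/e}$. Setting
$$M_\cc(x) \,:=\, \sum_{\substack{|m|, |n| \leq x \\ n \neq 0}} M(g_\cc(m, n); B),$$
one has $N_\cc(x) - M_\cc(x) = \sum_{(m, n)} E(g_\cc(m, n); B)$. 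I will check (a) the pointwise lower bound $M_\cc(x) \gg x^2 / (\log H)^{C_1}$ for every $\cc \in S_\epsilon^{\mathrm{loc}}(H)$, and (b) the variance bound
$$V \,:=\, \sum_{\cc \in (\ZZ \cap [-H, H])^{d+1}} |N_\cc(x) - M_\cc(x)|^2 \,\ll\, H^{d+1} x^4 (\log H)^{-A'}$$
for any prescribed $A'$. Then, for $\cc \in S_\epsilon^{\mathrm{loc}}(H)$ with $N_\cc(x) \leq x^2/(\log H)^{C_A}$ and $C_A$ sufficiently large, one has $|N_\cc(x) - M_\cc(x)| \gg x^2/(\log H)^{C_1}$, and Chebyshev then bounds the number of such $\cc$ by $O(V(\log H)^{2C_1}/x^4) = O(H^{d+1}/(\log H)^A)$ upon choosing $A' = A + 2C_1$.

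Step (a) reduces to a positive-density / short-interval claim for the average of $r_K$ at values of $g_\cc$. Positivity of the real density over $B\BB$ is baked into the construction (\ref{def-reg-B})--(\ref{bound-pa}), and $\cc \in S_\epsilon^{\mathrm{loc}}$ gives positive $p$-adic densities at every prime. An application of the Selberg--Delange theorem to $\sum_{0 < |n| \leq T} r_K(n)$, restricted to residue classes admitting $p$-adic lifts for each problematic prime, yields $M_\cc(x) \gg_K x^2 / (\log H)^{C_1}$, with $C_1 = 1$ sufficing (one actually expects $C_1 = 1 - 1/e$).

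Step (b) is where Theorem \ref{prop_general} enters. I fix $d - 1$ of the coordinates of $\cc$ and let the remaining two---call them $c_k = a$ and $c_l = b$ with $0 \leq k < l \leq d$, chosen so that $c_0 c_d \neq 0$ in the fixed part (a harmless shift absorbs the $d = 2$ boundary case where $\{k, l\} \cap \{0, d\} \neq \emptyset$)---vary freely over $[-H, H]$. The inner variance then takes precisely the form
$$\sum_{|a|, |b| \leq H} \Big| \sum_{|m|, |n| \leq x} E\bigl( a m^k n^{d-k} + b m^l n^{d-l} + g_{\mathrm{fix}}(m, n); B \bigr) \Big|^2$$
controlled by Theorem \ref{prop_general} with $F = E$. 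The two hypotheses that must be verified for $F = E$ are: first, $|E(n; B)| \leq \tau_{C(K)}(|n|)$, which follows from $R_K(n; B) \ll_K \tau_e(|n|)^{O(1)}$ (integer points on $\NNN_K = n$ correspond to principal ideals of norm $|n|$ modulo units, the latter being a bounded count on $B\BB$) together with a matching bound on $M(n; B)$; and second, equidistribution of $E(\cdot; B)$ in arithmetic progressions with modulus $q \leq (Hx^d)^{5/24}$ outside a thin exceptional set $\mathcal{Q}$, which reduces to a uniform estimate for $\OO_K$-ideals in progressions. The exponent $\Delta_{d,e} = 1/(2de(e+3))$ in (\ref{x-H}) is calibrated so that $x^d \leq (Hx^d)^{5/24}$ and $|I| \asymp Hx^{l-k} \gg H^{1-\eps}x^{l-k}$ hold simultaneously with the assumption $x \leq H^{1/(2d)}$ of Theorem \ref{prop_general}. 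Summing the resulting $O(H^2 x^4 (\log x)^{-A'})$ bound over the $H^{d-1}$ fixed-coordinate choices produces $V \ll H^{d+1} x^4 (\log H)^{-A'}$.

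The main obstacle will be the equidistribution hypothesis for $E(\cdot; B)$. For $K = \QQ$ the analogue is exactly what Propositions \ref{prop_Liouville}--\ref{prop_Lambda} supply; for a number field $K$ of degree $e \geq 2$ the corresponding statement concerns the Dirichlet series $\sum r_K(n) n^{-s}$, which decomposes through Hecke $L$-functions of $K$, and one must produce a thin exceptional set $\mathcal{Q}$ of prime powers satisfying $\sum_{q \in \mathcal{Q}} q^{-1/(4d)} \ll (\log x)^{-3(A+1)}$, off of which the count of ideals in residue classes is controlled with loss only $(\log x)^{-40Ad^2}$. This requires a suitable zero-density estimate for Hecke $L$-functions of $K$, and the factor $e(e+3)$ appearing in $\Delta_{d,e}$ is exactly what the quality of the known zero-free region forces.
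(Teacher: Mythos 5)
Your skeleton (second moment via Theorem \ref{prop_general} and Chebyshev) matches the paper's, but the choice of comparison term is where the argument breaks.

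First, the Hardy--Littlewood main term $M(n;B) = c_K r_K(n) B^{e-1}/|n|^{1-1/e}$ is not the object the paper uses, and the difference is essential, not cosmetic. The paper's localized model is $\hat R_K(n;B) = \gamma(W,n)\,\omega(n;B)$ in \eqref{def-Rh}, where $W = \prod_{p\leq w} p^{k(x)}$ is the $W$-trick modulus of \eqref{def-W} and $\gamma$ is the finite local density mod $W$. Because both $R_K$ and $\hat R_K$ carry the \emph{same} local factors at all primes $p\leq w$, the difference $F = R_K - \hat R_K$ is already ``pre-sieved'' at small primes, and the equidistribution hypothesis of Theorem \ref{prop_general} for $F$ follows from a sieve argument (Lemma \ref{BST-Lemma-8.1}, quoting \cite[Lemma 8.1]{BST}) with exceptional set $\mathcal{Q} = \{p^{k(x)} : p\leq w\}$, which trivially satisfies the required smallness. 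By contrast your $E = R_K - M$ has not been stripped of its small-prime oscillation, so its equidistribution in progressions to modulus $q \leq x^d$ with the tiny exceptional-set budget $\sum_{q\in\mathcal{Q}} q^{-1/(4d)} \ll (\log x)^{-3(A+1)}$ would require Hecke $L$-function zero-density input of a strength you do not justify. You flag this as ``the main obstacle'' but the paper resolves it not by supplying such input but by changing the model.

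Second, and more seriously, Step (a) as stated is false. You assert $M_\cc(x) \gg x^2/(\log H)^{C_1}$ \emph{pointwise for every} $\cc\in S_\epsilon^{\mathrm{loc}}(H)$. Local solvability at each $p$ only gives $\sigma_p(\cc) > 0$; it puts no lower bound on the product of local densities, which can collapse as $\cc$ varies (e.g.\ a solution forced to have high $p$-adic valuation costs a factor $\asymp p^{-(\alpha+1)(e+1)}$, cf.\ Lemma \ref{hensel}). There is no pointwise lower bound for the local sum; the paper instead proves that the set of $\cc$ with small $\sigma_\cc(W_0)$ is itself small (Proposition \ref{BST-P-9.6}, via Rankin's trick and a stratification by $p$-adic valuation of the gradient). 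Proposition \ref{lcf-rs} therefore has an explicitly two-part bound, one piece accounting for non-admissible $g_\cc$ and one for small non-archimedean density. Your ``positivity'' statement elides exactly the hard uniformity question. Relatedly, the Selberg--Delange step you invoke is a one-dimensional average of $r_K$ over integers, while $M_\cc(x)$ is a two-dimensional average over values of a binary form; the passage between them is not addressed and is itself a substantial sieve problem whose answer again depends on the local densities of $g_\cc$.

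Third, fixing $B \asymp (Hx^d)^{1/e}$ uniformly loses the coefficient vectors for which $g_\cc$ only takes values much smaller than $H x^d$ on $|\uu|\leq 1$: then $g_\cc(m,n)$ never reaches the window $\asymp \tfrac12 B^e$ where $\omega(\cdot;B)\gg 1$ by Lemma \ref{lemma-arch}(iv), and the archimedean density gives you nothing. The paper addresses this by dyadically stratifying over $\tilde H \asymp b_\cc^{\pm}$ (Lemma \ref{Lemma-5.2} controls the tail $\tilde H \leq H/(\log H)^A$) and taking $B = \tilde H^{1/e}x^{d/e}$ adapted to each stratum, a refinement your setup omits.

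In short: the Chebyshev framework and the use of Theorem \ref{prop_general} are the right idea, but replacing the $W$-tricked localized model $\hat R_K$ by the Hardy--Littlewood $M(n;B)$ converts two elementary lemmas of the paper into two open-ended analytic problems (Hecke equidistribution, and a pointwise lower bound that is actually false), and drops the $\tilde H$-adaptive choice of $B$ that the archimedean analysis requires.
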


\subsection{A localized counting function}

Let 
\begin{align}\label{def-kx}
	k(x) = 1000 dA [\log \log x]
\end{align}
and let 
\begin{align}\label{def-W}
	W = \prod_{p \leq w} p^{k(x)} \text{ \ for \ } w =\exp (\sqrt{\log x}).
\end{align}
In particular, by the prime number theorem, we have $\log W = (1 + o(1)) w k(x)$, and thus $W$ exceeds any power of $x$.

For any $a \in \ZZ$ and $q \in \NN$, we define
\begin{align*}
	\gamma(q, a) = \frac{1}{q^{e-1}} \# \{ \sss \in (\ZZ / q\ZZ)^e: \NNN_K(\sss) \equiv a \, (\mod \, q)  \}.
\end{align*}
This function is multiplicative in $q$. With this notation, we put 
\begin{align}\label{def-Rh}
	\hat{R}_K(n; B) = \gamma(W, n) \cdot \omega(n; B),
\end{align}
where $\omega(n; B)$ is an archimedean factor that we proceed to define.

By (\ref{bound-pa}), suppose now that $y = \NNN_K(\xx)$ with $\xx \in B\BB$. Then, by the implicit function theorem, we can express $x_j$ in terms of $\xx' = (x_1, \dots, \widehat{x_j}, \dots, x_e)$ and $y$ as $x_j = x_j(\xx', y)$. We now define
\begin{align}\label{def-archi-den}
	\omega(y) = \omega(y; B) = \int_{\BB(y)} \Big| \frac{\pa \NNN_K}{\pa x_j}(x_j(\xx', y), \xx') \Big|^{-1} d \xx',
\end{align}
where $\BB(y) = \{ \xx' \in \RR^{e-1}: (x_j(\xx', y), \xx') \in B \BB \}$. This expression represents the real density of solutions to the equation $y = \NNN_K(\xx)$. A similar real density was originally introduced in \cite[Lemma 9]{B-HB1}.

We recall \cite[Lemma 7.1]{BST}, which collects some basic facts about $\omega(y)$.

\begin{lemma}\label{lemma-arch}
	Let $\epsilon \in \{\pm\}$ and let $\mathcal{B}$ be the region defined by (\ref{def-reg-B}) which depends on $\epsilon$. The function $\omega(y) = \omega(y; B)$ in $(\ref{def-archi-den})$ is non-negative, continuously differentiable and satisfies the following properties:
	\begin{enumerate}[label=(\roman*)]
		\item we have
		$$\omega(y+h) - \omega(y) \ll B^{-e} |h|,$$
		for all $y, h \in \RR$;
		\item we have
		$$\int_I \omega(y) dy = \mathrm{vol} \{ \xx \in \BB: \NNN_K(\xx) \in I \}, $$
		for any interval $I \subset \RR$;
		\item $\omega$ is supported on an interval of length $O(B^e)$ centered on the origin and satisfies $\omega(y) \ll 1$ throughout its support;
		\item there exists an interval of length $\gg B^e$ around the point $\frac{\epsilon}{2} B^e$ on which we have $\omega(y) \gg 1$.
	\end{enumerate}
\end{lemma}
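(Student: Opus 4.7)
The plan is to verify each of the four properties by systematically exploiting the implicit parameterization $x_j = x_j(\xx', y)$ of the level set $\{\NNN_K = y\} \cap B\BB$, together with the homogeneity of $\NNN_K$. Since $\NNN_K$ is a polynomial of degree $e$, on $B\BB$ we have $|\NNN_K(\xx)| \ll B^e$ and $|\partial^{\alpha} \NNN_K(\xx)| \ll B^{e-|\alpha|}$ for every multi-index $\alpha$, while the defining property of $\BB$ combined with (\ref{bound-pa}) gives the uniform lower bound $|\partial_j \NNN_K(\xx)| \gg B^{e-1}$ throughout $B\BB$.

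Property (ii) is the smooth co-area formula applied to $\NNN_K$: the map $(\xx', y) \mapsto (x_j(\xx', y), \xx')$ is a $C^1$-diffeomorphism from $\{(\xx', y) : \xx' \in \BB(y),\, y \in I\}$ onto $\{\xx \in B\BB : \NNN_K(\xx) \in I\}$ with Jacobian determinant $|\partial x_j/\partial y| = |\partial_j \NNN_K|^{-1}$, so the change-of-variables formula yields the identity directly. Property (iii) is then immediate from (ii): $\omega$ is supported in $\{|y| \ll B^e\}$ because $|\NNN_K(\xx)| \ll B^e$ on $B\BB$, while for each $y$ the slice $\BB(y)$ has $(e-1)$-volume $\ll B^{e-1}$, which combined with the pointwise bound $|\partial_j \NNN_K|^{-1} \ll B^{-(e-1)}$ gives $\omega(y) \ll 1$. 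For property (iv), the point $B\xx_1$ lies in the interior of $B\BB$ with $\NNN_K(B\xx_1) = \tfrac{\epsilon}{2} B^e$; by continuity, a fixed-shape neighborhood $U \subset B\BB$ of $B\xx_1$ of $e$-volume $\gg B^e$ satisfies $|\partial_j \NNN_K| \asymp B^{e-1}$ on $U$. For any $y$ in an interval of length $\gg B^e$ around $\tfrac{\epsilon}{2} B^e$, the slice $\BB(y)$ contains a projected disk of $(e-1)$-volume $\gg B^{e-1}$ coming from $U$, and the matching upper bound on $|\partial_j \NNN_K|$ yields $\omega(y) \gg 1$.

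The main obstacle is the Lipschitz estimate (i), because the domain $\BB(y)$ itself moves with $y$. I would split
\[
\omega(y+h) - \omega(y) = \int_{\BB(y) \cap \BB(y+h)} \!\!\Big( |\partial_j \NNN_K|^{-1}(\xx', y+h) - |\partial_j \NNN_K|^{-1}(\xx', y) \Big)\, d\xx' + E,
\]
where $E$ collects the contributions of $\BB(y) \triangle \BB(y+h)$. For the interior integrand, differentiating in $y$ via $\partial x_j/\partial y = (\partial_j \NNN_K)^{-1}$ together with the Hessian bound $\partial_{jj} \NNN_K \ll B^{e-2}$ gives a derivative of size $O(B^{e-2} \cdot B^{-2(e-1)} \cdot B^{-(e-1)}) = O(B^{-2e+1})$; integrating against $|\BB(y)| \ll B^{e-1}$ and then against $[y, y+h]$ produces a contribution of $O(|h| B^{-e})$. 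For the boundary term $E$, the implicit function theorem shows that for $\xx' \in \BB(y) \triangle \BB(y+h)$ the point $(x_j(\xx', y), \xx')$ lies within distance $\ll |h| B^{-(e-1)}$ of $\partial(B\BB)$; since $\partial(B\BB) = B \partial \BB$ has $(e-1)$-surface area $\ll B^{e-1}$ and the transversality of $\partial_j$ forces its projection to $\xx'$-space to have boundary of $(e-2)$-measure $\ll B^{e-2}$, one obtains $|\BB(y) \triangle \BB(y+h)| \ll |h|/B$ and hence $|E| \ll |h| B^{-1} \cdot B^{-(e-1)} = |h| B^{-e}$. The non-negativity and continuous differentiability of $\omega$ are then built into the construction. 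The delicate step is the uniform geometric control of $\partial \BB(y)$ in $y$, which ultimately rests on the sphere-like structure of $\partial \BB$ and the uniformity of the lower bound on $|\partial_j \NNN_K|$ over $B\BB$.
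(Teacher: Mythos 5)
The paper does not prove this lemma itself; it is quoted verbatim as \cite[Lemma 7.1]{BST}, so there is no internal proof to compare your attempt against. On the substance: your arguments for (ii), (iii), and (iv) are the standard ones and are essentially sound (for (ii) the implicit parameterization is single-valued on $B\BB$ because $\partial_j \NNN_K$ is of one sign throughout, which is worth saying explicitly; for (iv) it is cleaner to note that each fibre in the $x_j$-direction has the same length $2\kappa B$, on which $\NNN_K$ sweeps out an interval of length $\asymp B^e$, so the ranges over all $\xx'$ in a fixed smaller box around $B\xx_1'$ share a common subinterval of length $\gg B^e$ around $\tfrac{\epsilon}{2}B^e$, giving $\BB(y) \supset$ that box for all such $y$).

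The gap is in part (i), in the estimate of the boundary term $E$, specifically the claim $|\BB(y)\triangle\BB(y+h)| \ll |h|/B$. You correctly note that for $\xx'$ in the symmetric difference, $(x_j(\xx',y),\xx')$ lies within $O(|h|B^{-(e-1)})$ of $\partial(B\BB)$, and you then argue via the projection of a thin collar around $\partial(B\BB)$. But $\BB$ is a sup-norm box, so $\partial(B\BB)$ has $2e$ faces. Your ``boundary of $(e-2)$-measure $\ll B^{e-2}$'' estimate only accounts for the $2(e-1)$ lateral faces (those projecting onto $\partial P$). The remaining two faces, the ones orthogonal to $e_j$, project onto \emph{all} of $P$, a set of measure $\asymp B^{e-1}$; the corresponding part of $\BB(y)\triangle\BB(y+h)$ is exactly the union of the level bands $\{\xx'\in P : \phi_\pm(\xx')\in[y,y+h]\}$, where $\phi_\pm(\xx')=\NNN_K\bigl(\xx', Bx_{1,j}\pm\kappa B\bigr)$ is the restriction of $\NNN_K$ to a face. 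Bounding these bands by $\ll|h|/B$ requires $|\nabla\phi_\pm|\gg B^{e-1}$, but $\nabla\phi_\pm$ is the tangential part of $\nabla\NNN_K$ and can vanish. In fact, with the paper's choice $\xx_1 = 2^{-1/e}\ee_1$, the region $\BB$ is symmetric under $x_i\mapsto -x_i$ for $i\neq 1$, so $\phi_\pm$ has a critical point at the centre of $P$; near its critical value the band measure scales like $|h|^{1/2}B^{(e-2)/2}$, which dominates $|h|/B$ for small $|h|$. So your bound on $E$ does not follow as stated, and the argument needs a genuinely different mechanism (e.g.\ a Morse/stationary-phase analysis of $\phi_\pm$ near its critical points, or an appeal to a smoothing/convolution argument) to control the contribution from the faces orthogonal to $e_j$.
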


We now define the localized counting function to be
\begin{align}\label{def-lcf}
	\hat{N}_\cc(x) = \sum_{\substack{-x \leq m, n \leq x \\ n \neq 0}} \hat{R}_K(g_\cc(m, n); B),
\end{align}
for $\cc \in \ZZ^{d+1}$. The proof of Proposition \ref{thm-cha} proceeds in two steps. First we shall show that $\hat{N}_\cc(x)$ is usually a good approximation to $N_\cc(x)$, for suitable ranges of $x$ and $H$. The remaining task will then be to show that $\hat{N}_\cc(x)$ is rarely smaller than it should be.

\section{Norm forms: Approximation by the localized counting function}

For $\cc \in \ZZ^{d+1}$, the range of the form $g_\cc$ on the region $|u| = 1$ is a closed interval. We denote that 
\begin{align}
	I_\cc = \{ g_\cc(\uu): |\uu| = 1 \} = [b^-_\cc, b^+_\cc].
\end{align}
Note that if $\cc \in S_+^{\mathrm{loc}}(H)$, we require $b^+_\cc > 0$; if $\cc \in S_-^{\mathrm{loc}}(H)$, we require $b^-_\cc < 0$. 

We begin with a simple lemma that controls the difference in the maximum (or minimum) of two continuous functions when they differ by a bounded range at each point.
\begin{lemma}\label{min-max}
	Let $n \geq 1$ and let $X$ be a compact subset in $\RR^n$. Suppose that $f, g: X \rightarrow \RR$ are continuous functions satisfying
	$$b^- \leq f(\xx) - g(\xx) \leq b^+, \text{ for all } \xx \in X.$$
	Then
	$$b^- \leq \max_{\xx \in X} f(\xx) - \max_{\xx \in X}g(\xx) \leq b^+ \quad \text{ and } \quad b^- \leq \min_{\xx \in X} f(\xx) - \min_{\xx \in X}g(\xx) \leq b^+.$$
\end{lemma}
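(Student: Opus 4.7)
The plan is to exploit compactness to guarantee that the max and min of $f$ and $g$ are actually attained, and then use the pointwise bound $g(\xx)+b^- \leq f(\xx) \leq g(\xx)+b^+$ at cleverly chosen extremizers.

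First I would treat the maximum. By compactness and continuity of $f$ and $g$, there exist $\xx_f, \xx_g \in X$ with $f(\xx_f) = \max_X f$ and $g(\xx_g) = \max_X g$. For the upper bound, I evaluate at $\xx_f$: $\max_X f = f(\xx_f) \leq g(\xx_f) + b^+ \leq \max_X g + b^+$. For the lower bound, I evaluate at $\xx_g$: $\max_X g = g(\xx_g) \leq f(\xx_g) - b^- \leq \max_X f - b^-$, rearranging to $\max_X f - \max_X g \geq b^-$.

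Next I would repeat the symmetric argument for the minimum. If $\yy_f, \yy_g \in X$ achieve the minima of $f$ and $g$ respectively, then at $\yy_f$ we obtain $\min_X f = f(\yy_f) \geq g(\yy_f) + b^- \geq \min_X g + b^-$, while at $\yy_g$ we obtain $\min_X g = g(\yy_g) \geq f(\yy_g) - b^+ \geq \min_X f - b^+$, which rearranges to $\min_X f - \min_X g \leq b^+$. Combining these two inequalities yields the desired bound for the minimum.

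There is no real obstacle here; the lemma is a standard and elementary consequence of compactness plus the observation that evaluating the hypothesis at the extremizer of one function gives one direction, and evaluating it at the extremizer of the other function gives the opposite direction. The only subtle point to mention is that the compactness of $X$ together with continuity of $f$ and $g$ is what makes the extremizers $\xx_f, \xx_g, \yy_f, \yy_g$ actually exist, so the argument never has to pass to suprema/infima over sequences.
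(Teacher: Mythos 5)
Your proof is correct and takes essentially the same approach as the paper: evaluating the pointwise hypothesis at extremizers of one function to bound the extremum of the other. The paper phrases it slightly more compactly by taking an arbitrary point $\yy$ and then maximizing over $\yy$, rather than naming explicit extremizers, but the underlying idea is identical.
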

\begin{proof}
	Let $\yy \in X$, we have
	\begin{align*}
		\max_{\xx \in X} f(\xx) \geq f(\yy) \geq g(\yy) + b^- \quad \text{ and } \quad \max_{\xx \in X} g(\xx) \geq g(\yy) \geq f(\yy) - b^+.
	\end{align*}
	Taking the maximum over $\yy \in X$ gives $$b^- \leq \max_{\xx \in X} f(\xx) - \max_{\xx \in X}g(\xx) \leq b^+.$$
	An analogous argument establishes the bounds for the minima.
\end{proof}

The next lemma shows that only a very small number of coefficient vectors $\cc \in S(H)$ result in the form $g_\cc$ attaining a small maximum (or minimum), in the absolute sense, on the region $|u| = 1$.
\begin{lemma}\label{Lemma-5.2}
    Let
\begin{align}\label{def-hat-A}
	\hat{H} = H(\log H)^{-A}.
\end{align}
    We have
    \begin{enumerate}
        \item $$\#\{\cc \in S(H): 0 < b^+_\cc \leq \hat{H} \} \ll \hat{H}H^{d}.$$
        \item $$\#\{\cc \in S(H): -\hat{H} \leq b^-_\cc < 0 \} \ll \hat{H}H^{d}.$$
    \end{enumerate}
\end{lemma}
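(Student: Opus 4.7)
The plan is to reduce (2) to (1) via a sign symmetry and then prove (1) by fibering $\ZZ^{d+1}$ along a well-chosen integer direction $\mathbf{e}$, so that each line parallel to $\mathbf{e}$ contains at most $O(\hat{H})$ lattice points in the target set while the relevant cosets number $O(H^d)$. The reduction of (2) to (1) is immediate: since $b^-_\cc = -b^+_{-\cc}$, the condition $-\hat{H} \leq b^-_\cc < 0$ is equivalent to $0 < b^+_{-\cc} \leq \hat{H}$, and $\cc \mapsto -\cc$ is a bijection of $S(H)$.

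For (1) with odd $d$, evaluating $g_\cc$ at the boundary points $(1, 0)$ and $(-1, 0)$ gives the values $c_0$ and $-c_0$, so $b^+_\cc \geq |c_0|$; the assumption $b^+_\cc \leq \hat{H}$ then forces $c_0 \in [-\hat{H}, \hat{H}]$, and since $c_1, \dots, c_d$ are free in $[-H, H]$ the count is at most $(2\hat{H}+1)(2H+1)^d \ll \hat{H} H^d$.

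For (1) with even $d$ the direct argument fails, because $g_\cc(\pm 1, 0) = c_0$ only gives $c_0 \leq \hat{H}$ rather than $|c_0| \leq \hat{H}$. Here I would use the key geometric observation that on $\{|\uu| = 1\}$ one has $u^d + v^d \in [1, 2]$: at least one of $|u|, |v|$ equals $1$ and both $u^d, v^d \geq 0$. Setting $\mathbf{e} = e_0 + e_d$, the identity $g_{\cc + s\mathbf{e}}(\uu) = g_\cc(\uu) + s(u^d + v^d)$ yields, for integers $s \geq 0$,
\[
b^+_\cc + s \leq b^+_{\cc + s\mathbf{e}} \leq b^+_\cc + 2s,
\]
so $s \mapsto b^+_{\cc + s\mathbf{e}}$ is strictly increasing on $\ZZ$ with unit-step increments in $[1, 2]$. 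On each coset of $\ZZ \mathbf{e}$ the set $\{s \in \ZZ : b^+_{\cc + s\mathbf{e}} \in (0, \hat{H}]\}$ is therefore an interval of at most $\hat{H} + 1$ consecutive integers. Using the transversal $\{c_d = 0\}$, each coset meeting $[-H, H]^{d+1}$ has a unique representative $(c_0 - c_d, c_1, \dots, c_{d-1}, 0)$ with $c_0 - c_d \in [-2H, 2H]$ and $c_1, \dots, c_{d-1} \in [-H, H]$, giving $(4H + 1)(2H + 1)^{d-1} = O(H^d)$ cosets; multiplying produces the bound $O(\hat{H} H^d)$.

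The main obstacle is identifying the correct shift direction $\mathbf{e}$ in the even-$d$ case. A shift along $e_0$ alone is ineffective, because $b^+_\cc$ can be insensitive to $c_0$ when the maximum of $g_\cc$ on $|\uu| = 1$ is attained at points with $u = 0$; the combined shift $\mathbf{e} = e_0 + e_d$ produces the crucial uniform lower bound $u^d + v^d \geq 1$ that forces at least a unit of growth in $b^+$ per unit integer shift and is the heart of the argument.
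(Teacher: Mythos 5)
Your proof is correct and follows essentially the same route as the paper's: for even $d$ both arguments shift the coefficient vector along the diagonal direction $\mathbf{e} = e_0 + e_d$ and use the min-max lemma (Lemma~\ref{min-max}) to show $b^+$ increases by at least $1$ and at most $2$ per unit shift, then count cosets. Your version is a clean streamlining: where the paper splits $\{|\uu|=1\}$ into two faces to track the $c_0$- and $c_d$-shifts separately via the auxiliary quantities $b^{(+,1)}_\cc$ and $b^{(+,2)}_\cc$, you observe directly that $u^d + v^d \in [1,2]$ on $\{|\uu|=1\}$ when $d$ is even, which gives the increment bound in a single step; your odd-$d$ argument uses only the constraint from $c_0$ (the paper also uses $c_d$, getting the sharper $\hat H^2 H^{d-1}$, but $\hat H H^d$ is all that is claimed), and your reduction of (2) to (1) via $\cc \mapsto -\cc$ makes explicit what the paper leaves as "similar."
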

\begin{proof}
We only prove (1); the proof of (2) is similar.

When $d$ is odd, we have $g_\cc(\uu) = -g_{\cc}(-\uu)$. By considering the four points $(\pm 1, 0)$ and $(0, \pm 1)$, we know
$$b^+_\cc \geq \max(\pm c_0, \pm c_d) = \max(|c_0|, |c_d|).$$ 
It follows that $\max(|c_0|, |c_d|) \leq \hat{H}$, if $b^+_\cc \leq \hat{H}$. Therefore, we get
$$\#\{\cc \in S(H): 0 < b^+_\cc \leq \hat{H} \} \leq \#\{\cc \in S(H): b^+_\cc \leq \hat{H} \} \ll \hat{H}^2 H^{d-1} \leq \hat{H}H^{d}.$$

Now we assume that $d$ is even.
	Let 
	$$b^{(+,1)}_{\cc} = \max\{g_\cc(\uu): |u| = 1, |v| \leq 1\}, \indent b^{(+,2)}_{\cc} = \max\{g_\cc(\uu): |u| \leq 1, |v| = 1\},$$
	so that
	\begin{align}\label{prop-bc}
		b^+_{\cc} = \max(b^{(+,1)}_{\cc}, b^{(+,2)}_{\cc}).
	\end{align}	
	Since $d$ is even, writing
	$g_\cc(\uu) = u^d + g_{(c_0-1, c_1, \dots, c_d)}(\uu)$ shows that
	\begin{align*}
		g_\cc(\uu) - g_{(c_0-1, c_1, \dots, c_d)}(\uu) = 1, & \text{ for any } \uu \text{ with } |u| = 1, \\
	 0 \leq g_\cc(\uu) - g_{(c_0-1, c_1, \dots, c_d)}(\uu) \leq 1, & \text{ for any } \uu \text{ with } |u| \leq 1.
	\end{align*}
	Similarly, we have
	\begin{align*}
		g_\cc(\uu) - g_{(c_0, c_1, \dots, c_d-1)}(\uu) = 1, & \text{ for any } \uu \text{ with } |v| = 1, \\
	 0 \leq g_\cc(\uu) - g_{(c_0, c_1, \dots, c_d-1)}(\uu) \leq 1, & \text{ for any } \uu \text{ with } |v| \leq 1.
	\end{align*}
	Thus, by Lemma \ref{min-max} we know
	\begin{align*}
		b^{(+,1)}_{\cc} - b^{(+,1)}_{(c_0-1, c_1, \dots, c_d)} = 1, \indent 0 \leq b^{(+,1)}_{\cc} - b^{(+,1)}_{(c_0, c_1, \dots, c_d-1)} \leq 1,\\
		b^{(+,2)}_{\cc} - b^{(+,2)}_{(c_0, c_1, \dots, c_d-1)} = 1, \indent 0 \leq b^{(+,2)}_{\cc} - b^{(+,2)}_{(c_0-1, c_1, \dots, c_d)} \leq 1,
	\end{align*}
	which imply that
	\begin{align*}
		1 \leq b^{(+,1)}_{\cc} - b^{(+,1)}_{(c_0-1, c_1, \dots, c_d-1)} \leq 2, \indent 1 \leq  b^{(+,2)}_{\cc} - b^{(+,2)}_{(c_0-1, c_1, \dots, c_d-1)} \leq 2.
	\end{align*}
	Therefore, by Lemma \ref{min-max} and (\ref{prop-bc}) we have
	\begin{align*}
		1 \leq b^+_\cc - b^+_{(c_0-1, c_1 \dots, c_{d-1}, c_d-1)} \leq 2.
	\end{align*}
	If $0 < b^+_\cc \leq \hat{H}$, then the number of possibilities for the choices of the pair $(c_0, c_d)$ with fixed $(c_1, \dots, c_{d-1})$ and the difference $c_d - c_0$ is $\ll \hat{H}$.
	Since there are $\ll H^{d}$ choices for $(c_1, \dots, c_{d-1})$ and $c_d - c_0$, we deduce that 
	$$\#\{\cc \in S(H): 0 < b_\cc^{+} \leq \hat{H} \} \ll \hat{H}H^d.$$ 
\end{proof}

It will be convenient to study a refinement in which the extreme value $b^-_\cc$ or $b^+_\cc$ is restricted to lie in a dyadic interval. When $H > 0$ is sufficiently large, let $\tilde{H}$ be a parameter satisfying
\begin{align}\label{def-tilde-H}
	2H(\log H)^{-A} \leq \tilde{H} \leq H.
\end{align}
We define
\begin{align}\label{def-StH}
	S_+(H, \tilde{H}) = \left\{ \cc \in S(H): \frac{1}{2}\tilde{H} < b^+_\cc \leq \tilde{H} \right\},
\end{align}
\begin{align}\label{def-StH-}
	S_-(H, \tilde{H}) = \left\{ \cc \in S(H): -\tilde{H} \leq b^-_\cc < -\frac{1}{2}\tilde{H} \right\}.
\end{align}
For $\epsilon = \{\pm\}$, we further define
\begin{align*}
	S_\epsilon^{\mathrm{loc}}(H, \tilde{H}) = S_\epsilon(H, \tilde{H}) \cap S^{\mathrm{loc}}(H).
\end{align*}
We also set
\begin{align}\label{def-B}
	B = \tilde{H}^{1/e} x^{d/e}.
\end{align}

We state \cite[Lemma 8.1]{BST}, which characterizes important properties of the function 
$$F(n) = R_K(n; B) - \hat{R}_K(n; B).$$
\begin{lemma}\label{BST-Lemma-8.1}
Let $\varepsilon > 0$ be fixed. Let $B \geq 1$. Define
\begin{equation}\label{calQ}
\mathcal{Q} = \left\{ p^{k(x)} : p \leq w \right\}.
\end{equation}

Let $I \subset \mathbb{R}$ be an interval such that $I \subset [-2Hx^d, 2Hx^d]$, $|I| > H^{1 - \varepsilon}$ and $x \leq H^{\Delta_{d, e}}$ with $\Delta_{d, e}$. Let $q \in \mathbb{N}$ and let $1 \leq u \leq q$. \textit{Assume that $q$ is not divisible by any element of $\mathcal{Q}$, that $q \leq x^d$ and that}
\begin{equation*}
\gcd(u, q) \leq \exp\left( \frac{2 \sqrt{\log x}}{\log \log x} \right).
\tag{8.5}
\end{equation*}
Then
\begin{align}\label{eqn-8.1}
	\left| \sum_{\substack{n \in I \\ n \equiv u \bmod q}} F(n) \right| 
\ll \frac{|I|}{q} \left( \exp\left( -\frac{1}{23} \sqrt{\log x} \right) + \frac{x^{d(e+2)} B^{e - 1/2}}{|I|} \right).
\end{align}

\end{lemma}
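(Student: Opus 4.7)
The strategy mirrors the proof of the corresponding statement in \cite[Lemma 8.1]{BST}, adapted to the lattice-point setting of the norm-form counting function $R_K$. The idea is to compare the true sum of $R_K$ along the progression with the model sum $\sum\gamma(W,n)\omega(n;B)$, via a simultaneous partition of $\ZZ^e$ into residue classes modulo $M:=\lcm(W,q)$.

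First, I would unfold
$$\sum_{\substack{n\in I\\ n\equiv u\pmod q}}R_K(n;B) = \#\{\mathbf{x}\in B\BB\cap\ZZ^e : \NNN_K(\mathbf{x})\in I,\ \NNN_K(\mathbf{x})\equiv u\pmod q\}.$$
Since $q$ is not divisible by any element of $\mathcal{Q}$ in \eqref{calQ}, I factor $q=q_1q_2$ with $q_1=\prod_{p\leq w}p^{v_p(q)}$, so that $q_1\mid W$ and $\gcd(q_2,W)=1$. Partitioning lattice points by their residue class $\mathbf{t}\in(\ZZ/M\ZZ)^e$, the admissibility condition $\NNN_K(\mathbf{t})\equiv u\pmod q$ cuts out $M^e\gamma(q,u)/q$ classes by the Chinese remainder theorem and the multiplicativity of $\gamma$. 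For each admissible class I would estimate the count of $\mathbf{x}\in B\BB\cap(\mathbf{t}+M\ZZ^e)$ with $\NNN_K(\mathbf{x})\in I$ by a Davenport-type lattice-point estimate on the smooth hypersurface $\{\NNN_K=y\}$: the gradient bound $\pa\NNN_K/\pa x_j\gg B^{e-1}$ throughout $B\BB$ from \eqref{bound-pa} keeps the level surfaces transverse with $(e-1)$-dimensional area $O(B^{e-1})$. Lemma \ref{lemma-arch}(ii) then gives a per-class main term $M^{-e}\int_I\omega(y)\,dy$, and the aggregated boundary error contributes the second term $x^{d(e+2)}B^{e-1/2}/q$ in \eqref{eqn-8.1} after the standard square-root-saving argument for integer points on hypersurfaces.

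Second, I would evaluate the model sum by grouping $n$ according to $n\pmod W$. Since $\gcd(q,W)=q_1$, the residue $n\pmod W$ runs through the $W/q_1$ values in $(u+q\ZZ)\pmod W$; for each such residue $v$, the inner progression $\{n\in I:n\equiv v\pmod W,\,n\equiv u\pmod q\}$ has common difference $M$, and the Lipschitz control of Lemma \ref{lemma-arch}(i) replaces the sum of $\omega(n;B)$ by $M^{-1}\int_I\omega(y)\,dy$ up to an error of size $O(|I|/B^e)$ per $v$. Summing against $\gamma(W,v)$ and using the identity $\sum_{v\equiv u\pmod{q_1}}\gamma(W,v)=(W/q_1)\gamma(q_1,u)$ yields
$$\sum_{\substack{n\in I\\ n\equiv u\pmod q}}\gamma(W,n)\omega(n;B) = \frac{\gamma(q_1,u)}{q}\int_I\omega(y)\,dy + \text{error}.$$

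Subtracting, the residual main-term mismatch is $\gamma(q_1,u)(\gamma(q_2,u)-1)q^{-1}\int_I\omega(y)\,dy$, which is where I expect the principal obstacle: obtaining a uniform saving $|\gamma(q_2,u)-1|\ll\exp(-\tfrac{1}{23}\sqrt{\log x})$. Since every prime factor of $q_2$ exceeds $w=\exp(\sqrt{\log x})$, and the hypothesis $\gcd(u,q)\leq\exp(2\sqrt{\log x}/\log\log x)$ ensures $p\nmid u$ for each $p\mid q_2$, the Weil--Deligne bound $|\gamma(p,u)-1|\ll p^{-1/2}$ for primes of good reduction (together with Hensel lifting for higher prime powers) produces a saving of at least $w^{-1/2}=\exp(-\tfrac12\sqrt{\log x})$ at each such prime. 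Expanding the product over the prime divisors of $q_2$ via a Rankin-style sieve delivers the required bound, while the benign factor $\gamma(q_1,u)$ and the finitely many bad primes dividing $\operatorname{disc}(K)$ are absorbed into the constant $1/23$. The main obstacle is thus this uniform arithmetic control of $\gamma(q_2,u)-1$, which ultimately reduces to classical point-counting for norm-form hypersurfaces over $\FF_p$.
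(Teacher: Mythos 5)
The paper does not prove Lemma \ref{BST-Lemma-8.1}: it is quoted verbatim from \cite[Lemma 8.1]{BST}, since the function $F(n)=R_K(n;B)-\hat R_K(n;B)$ and the exceptional set $\mathcal Q$ in \eqref{calQ} depend only on the norm form $\NNN_K$ and the parameter $B$, not on whether one subsequently sums over values of polynomials or of binary forms. There is therefore no internal proof in this paper to compare against.

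On the merits of your sketch: the architecture is the right one (unfold $R_K$ as a lattice-point count, split $q=q_1 q_2$ with $q_1\mid W$ and $\gcd(q_2,W)=1$, show the model sum reproduces the $q_1$-part of the local density, and reduce the main-term mismatch to bounding $\gamma(q_2,u)-1$), but two technical points are handled loosely. First, $\gamma(p,u)-1$ is not controlled by Weil--Deligne. For $p\nmid u$ unramified in $K$ the fibre $\{\NNN_K=u\}$ over $\FF_p$ is a torsor under the norm-one torus, and one computes exactly $\gamma(p,u)=\prod_{\mathfrak p\mid p}\bigl(1-\mathrm N\mathfrak p^{-1}\bigr)/\bigl(1-p^{-1}\bigr)=\alpha_p^{-1}$, whence $\gamma(p,u)-1\ll 1/p$ by elementary means, uniformly; invoking a $p^{-1/2}$ saving for a generic degree-$e$ hypersurface is both weaker than what holds and, in that generality, not uniform. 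Second, and more seriously, the error term $x^{d(e+2)}B^{e-1/2}/|I|$ is dismissed as a ``standard square-root-saving argument'' for lattice points near a hypersurface, but that is precisely where the real analytic work of \cite[Lemma 8.1]{BST} lies: a Lipschitz-principle count per residue class gives only $B^{e-1}$, so the extra half power of $B$ must be extracted from a genuine estimate on integer points in the thin region $\{\xx\in B\BB: \NNN_K(\xx)\in I\}$ restricted to residue classes modulo $\lcm(W,q)$. Until that estimate is made explicit, the second term in \eqref{eqn-8.1} is asserted rather than obtained.
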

Moreover, from \cite[Section 8.1]{BST}, we know $|F(n)| \ll \tau(n)^{e+1}$ and the size of the exceptional set $\mathcal{Q}$ is $O((\log x)^{-100A})$.
In particular, these properties of \( F(n) \) together with (\ref{eqn-8.1}) satisfy the assumptions of Theorem \ref{prop_general}.
 
We now state our main result in this section, which shows that the localized counting function provides a good approximation to the global counting function for almost all coefficient vectors $\cc$.
\begin{prop}\label{cha-2moment}
	Let $C \geq 1$ be fixed. Assume conditions (\ref{x-H}) and (\ref{def-tilde-H}) hold. For $\epsilon \in \{ \pm\}$ and $\delta > 0$, define
	$$ E_{\epsilon, \delta}(x, H) = \# \left\{ \cc \in S_\epsilon(H, \tilde{H}): |N_\cc(x) - \hat{N}_\cc(x)| > \delta x^2 \right\},$$
	where $S_\epsilon(H, \tilde{H})$ is given by (\ref{def-StH}) and (\ref{def-StH-}). 
	Then
	$$E_{\epsilon, \delta}(x, H) \ll \frac{H^{d+1}}{\delta^2(\log x)^{2C}}. $$
\end{prop}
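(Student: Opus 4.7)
The plan is to prove Proposition \ref{cha-2moment} by a second-moment argument together with Theorem \ref{prop_general} applied to the function $F(n) := R_K(n; B) - \hat R_K(n; B)$. The first step is Chebyshev's inequality, which gives
\[
\delta^2 x^4 \cdot E_{\epsilon, \delta}(x, H) \leq \sum_{\cc \in S_\epsilon(H, \tilde H)} |N_\cc(x) - \hat N_\cc(x)|^2,
\]
so it suffices to prove that the right-hand side is $\ll H^{d+1} x^4 (\log x)^{-2C}$. By (\ref{def-R}) and (\ref{def-lcf}) one has $N_\cc(x) - \hat N_\cc(x) = \sum_{-x \leq m, n \leq x,\, n \neq 0} F(g_\cc(m, n))$, so the second moment is exactly an $L^2$ sum of an arithmetic function at values of random binary forms, which is the setting of Theorem \ref{prop_general}.

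Next I would enlarge the set of summation from $S_\epsilon(H, \tilde H)$ to the larger combinatorial cube $\{\cc \in [-H, H]^{d+1} : c_0 c_d \neq 0\}$, which contains $S_\epsilon(H, \tilde H)$ by (\ref{def-StH}) and (\ref{def-StH-}). Assuming $d \geq 3$, I would fix a pair of indices $0 < k < l < d$ and split the enlarged sum as an outer sum over the $O(H^{d-1})$ choices of ``background'' coefficients $(c_i)_{i \neq k, l}$ with $c_0 c_d \neq 0$, and an inner sum over $|c_k|, |c_l| \leq H$. The edge case $d = 2$ can be handled by the same shift trick that appears at the end of the proof of Theorem \ref{thm-Chowla}, replacing $c_0$ by $c_0 + 1$ so that the resulting background acquires a non-vanishing leading coefficient. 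Each inner sum is then of the shape addressed by Theorem \ref{prop_general}, with $(c_k, c_l)$ playing the role of $(a, b)$ (after the index reversal $i \leftrightarrow d - i$ that reconciles the conventions of $g_\cc$ and the $g$ in Theorem \ref{prop_general}).

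The main technical work is in verifying hypotheses (1)--(2) of Theorem \ref{prop_general} for this $F$. Hypothesis (1) is immediate from the divisor-type bound $|F(n)| \ll \tau(|n|)^{e+1} \leq \tau_{B_0}(|n|)$ recorded in the discussion after Lemma \ref{BST-Lemma-8.1}, for a constant $B_0 = B_0(e)$, after taking the effective exponent $A$ in Theorem \ref{prop_general} large enough in terms of $C$, $d$, $e$. For hypothesis (2), the set $\mathcal{Q}$ from (\ref{calQ}) satisfies $\sum_{q \in \mathcal{Q}} q^{-1/(4d)} \ll (\log x)^{-100 A}$ since each $q \in \mathcal{Q}$ exceeds $2^{k(x)}$ with $k(x) = 1000 d A \log \log x$. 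The progression bound (\ref{eqn-8.1}) gives exactly what we need, because on intervals of length $|I| \geq H^{1-\eps} x^{l-k}$ with $B = \tilde{H}^{1/e} x^{d/e}$ from (\ref{def-B}), the error term $x^{d(e+2)} B^{e-1/2}/|I|$ works out to $\ll H^{\eps - 1/(2e)} x^{d(e+3) - d/(2e) - (l-k)}$; the restriction $x \leq H^{\Delta_{d,e}} = H^{1/(2de(e+3))}$ from (\ref{x-H}) forces $x^{d(e+3)} \leq H^{1/(2e)}$, and choosing $\eps$ small then makes this error a genuine negative power of $H$, hence much smaller than $(\log H)^{-40 A d^2}$.

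Once the hypotheses are verified, Theorem \ref{prop_general} yields each inner sum as $\ll H^2 x^4 (\log H)^{-A''}$ for any desired $A''$, and multiplying by the $O(H^{d-1})$ backgrounds gives the claimed $L^2$ bound. The main obstacle is really the verification of hypothesis (2): the exponent $\Delta_{d,e} = 1/(2de(e+3))$ in (\ref{x-H}) has been calibrated precisely so that the error term inside (\ref{eqn-8.1}) remains below $(\log H)^{-40 A d^2}$ on all relevant intervals, and any meaningful weakening of this constraint on $x$ relative to $H$ would destroy the balance between $B$, $|I|$, and $x^{d(e+2)}$ in that estimate.
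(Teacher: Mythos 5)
Your proposal is correct and takes essentially the same approach as the paper. The paper's own proof is extremely terse—it sets $\alpha_{m,n} = \mathbf{1}_{n\neq 0}$, applies Chebyshev's inequality, and then cites Theorem \ref{prop_general}, implicitly relying on the remark after Lemma \ref{BST-Lemma-8.1} (that $|F(n)| \ll \tau(n)^{e+1}$ and that (\ref{eqn-8.1}) with $\mathcal Q$ from (\ref{calQ}) supplies hypothesis (2)) and on the enlargement of the sum to the full cube with a choice of two free coordinates; your write-up makes all of those implicit steps explicit, including the numerics showing that $x \leq H^{\Delta_{d,e}}$ is precisely what forces the $x^{d(e+2)}B^{e-1/2}/|I|$ error below $(\log H)^{-40Ad^2}$.
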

\begin{proof}
	We set the coefficients $\alpha_{m, n} = 1$ for all $n \neq 0$ and $\alpha_{m, n} = 0$ for $n = 0$. Then by applying Chebyshev's inequality together with Theorem \ref{prop_general}, we obtain
	\begin{align*}
		E_{\epsilon, \delta}(x, H) & < \frac{1}{\delta^2 x^4} \sum_{\cc \in S_\epsilon(H, \tilde{H})} |N_\cc(x) - \hat{N}_\cc(x)|^2 \\
		& = \frac{1}{\delta^2 x^4} \sum_{\cc \in S_\epsilon(H, \tilde{H})} \left| \sum_{\substack{-x \leq m,n \leq x \\ n \neq 0}} F(g_\cc(m,n)) \right|^2  \\ 
		& \ll \frac{1}{\delta^2 x^4} \cdot H^{d+1} x^4 (\log x)^{-2C} = \frac{H^{d+1}}{\delta^2(\log x)^{2C}}.
	\end{align*}
\end{proof}

%\subsection{Proof of Corollary \ref{weak-ZD}}

%Any $(\xx, u, v) \in \ZZ^{e+2}$ counted by $N_\cc(x)$ is lie in the box $[-P, P]^{e+2}$ for 
%$$P \ll B \leq H^{1/e} x^{d/e} \leq H^{1/e + d \Delta_{d, e} / e}.$$
%Let $\eps > 0$ be fixed. According to Proposition \ref{thm-cha}, with probability 1, as admissible coefficient vectors are ordered by height, there are $\gg H^{2\Delta_{d,e} - \eps}$ integer solutions to the equation $\NNN_K(\xx) = g(u, v)$. We now appeal to a result of Bombieri and Pila \cite{XXX}, which shows that an irreducible degree $k$ curve in $\AAA^{e+2}$ has $O_{e, k, \eps}(P^{1/k + \eps})$ points in $(\ZZ \cap [-P, P])^{e+1}$, where the implied constant depends only on $e, k$ and $\eps$. Thus, if all the solutions were covered by $\leq L$ irreducible curves of degree $k \leq D$, we would have
%$$H^{2\Delta_{d,e} - \eps} \ll_{d,D,L,\eps} P^{1/k + \eps} \ll H^{(1/e + d \Delta_{d,e} / e)/k + \eps},$$
%which is a contradiction for 
%$$k > $$

\section{Norm forms: the localized counting function is rarely small}

Our goal is to show that for almost all coefficient vectors $\cc$, the localized counting function $\hat{N}_\cc(x)$ is large. More precisely, we prove the following result.

\begin{prop}\label{lcf-rs}
	Let $A \geq 1$ be fixed as in (\ref{def-tilde-H}). Take $B = \tilde{H}^{1/e}x^{d/e}$ and assume that conditions (\ref{x-H}) and (\ref{def-tilde-H}) hold. Let $\epsilon \in \{\pm\}$. Then there exists $C = C_{A, d ,e} \geq 1$, such that
	\[
\# \left\{ \mathbf{c} \in S_\epsilon^{\text{loc}}(H, \tilde{H}) : \hat{N}_\mathbf{c}(x) < \delta x^2 \right\} 
\ll \frac{H^{d+1}}{(\log x)^{dC}} 
+ \left(\delta (\log x)^{c_{d,e}} (\log H)^{2A} \right)^{\frac{1}{d^2 (e+1)}} \tilde{H} H^d,
\]
where $\delta = (\log x)^{-C}$ and $c_{d,e} = e + d^2(d+1)^{e+2}$.
\end{prop}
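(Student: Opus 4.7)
The plan is to establish a pointwise lower bound on $\hat N_\cc(x)$ coming from the archimedean factor, and then to handle the $W$-adic factor by a combination of first- and high-moment estimates over $\cc$.

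For the archimedean factor, fix $\cc \in S_\epsilon(H, \tilde H)$, so $|b_\cc^\epsilon| \in (\tilde H/2, \tilde H]$. By continuity and compactness of $g_\cc$ on the unit square together with the intermediate-value theorem, there is an open set $U_\cc \subset [-1,1]^2$ of Lebesgue measure $\gg 1$ on which $\epsilon g_\cc(u,v) \in [\tilde H/8, \tilde H/4]$. Rescaling $(u,v) \mapsto (m,n) = (xu, xv)$ and using $B^e = \tilde H x^d$, a standard lattice-point count produces a set $\mathcal G_\cc \subset [-x,x]^2 \cap \ZZ^2$ of size $\gg x^2$ on which $\epsilon g_\cc(m,n) \in [B^e/8, B^e/4]$. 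Lemma \ref{lemma-arch}(iv) then forces $\omega(g_\cc(m,n);B) \gg 1$ throughout $\mathcal G_\cc$, so
\[
\hat N_\cc(x) \;\gg\; \sum_{(m,n) \in \mathcal G_\cc} \gamma\big(W,\, g_\cc(m,n)\big).
\]

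The $W$-adic part is then controlled in two steps. First, the mean $\sum_{\cc \in S_\epsilon(H, \tilde H)} \hat N_\cc(x)$ is bounded below by $\gg \tilde H H^d x^2/(\log x)^{c_1}$ for some $c_1(d,e)$: exchanging summation and using multiplicativity of $\gamma$ reduces to computing, for each fixed $(m,n)$, an Euler product over primes $p \leq w$ of local averages over $\cc$, and local solubility ($\cc \in S^{\mathrm{loc}}$) together with Mertens' theorem delivers the lower bound. Second, the $d^2(e+1)$-th moment satisfies
\[
\sum_{\cc \in S_\epsilon(H, \tilde H)} \hat N_\cc(x)^{d^2(e+1)} \ll (\log H)^{2A}(\log x)^{c_{d,e}}\, \tilde H H^d\, x^{2 d^2(e+1)}.
\]
Combining these two bounds via a H\"older-type splitting of $\sum_\cc \hat N_\cc$ over $\mathcal E_\delta = \{\cc : \hat N_\cc(x) < \delta x^2\}$ and its complement yields the claimed tail bound; the term $H^{d+1}/(\log x)^{dC}$ absorbs the exceptional contribution from $\cc$ with $|b_\cc^\epsilon|$ atypically small (handled by Lemma \ref{Lemma-5.2}) or with degenerate local densities.

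The main obstacle is the high-moment estimate, which requires controlling the $d^2(e+1)$-fold correlation $\sum_\cc \prod_{i=1}^{d^2(e+1)} \gamma(W, g_\cc(m_i,n_i))\,\omega(g_\cc(m_i,n_i);B)$ uniformly in the tuples $((m_i, n_i))$. The $\omega$-factors constrain each $g_\cc(m_i, n_i)$ to a dyadic interval of size $\asymp B^e$; the $\gamma$-factors split by CRT over primes $p \leq w$, with each $p$-local piece bounded via Lagrange's theorem on the roots of $g_\cc$ modulo $p$ together with the divisor-type estimate $\gamma(p^{k(x)}, n) \ll (v_p(n)+1)^e$. The log exponent $c_{d,e} = e + d^2(d+1)^{e+2}$ records the combined cost of these ingredients, while $(\log H)^{2A}$ comes from the sieve loss in the mean lower bound. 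The delicate step is the treatment of degenerate configurations $(m_i, n_i)$ with large common factors or sub-generic rank, which must be isolated via a divisor-averaging argument in the spirit of Lemma \ref{Lemma-1.5}.
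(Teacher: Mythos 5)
Your archimedean step is essentially correct and parallels the paper's Lemma \ref{lbd-arc} and the construction of the region $D_\cc$ in \eqref{def-Dc}: you restrict to lattice points where $\omega(g_\cc(m,n);B) \gg 1$, leaving a sum of $\gamma(W, g_\cc(m,n))$ over a set of measure $\gg x^2 (\log H)^{-2A}$ (not $\gg x^2$ as you claim, since one must localize $\theta$ near $\theta_\cc$; this is where the $(\log H)^{2A}$ factor in the statement comes from).

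The $W$-adic step, however, is fundamentally wrong. A lower bound on the mean $\sum_\cc \hat N_\cc(x)$ combined with an upper bound on a high \emph{positive} moment $\sum_\cc \hat N_\cc(x)^{d^2(e+1)}$ cannot control the lower tail $\#\{\cc : \hat N_\cc(x) < \delta x^2\}$. Concretely: carrying out the H\"older splitting over $\mathcal E_\delta$ and its complement, the mean lower bound yields
\[
\#S_\epsilon \cdot \frac{x^2}{(\log x)^{c_1}} \;\ll\; \delta x^2 \,\#S_\epsilon \;+\; \Big(\sum_\cc \hat N_\cc^{\,p}\Big)^{1/p} (\#\mathcal E_\delta^c)^{1-1/p},
\]
and inserting your moment bound gives, for every choice of $\delta$ and $\#\mathcal E_\delta$, an inequality of the shape $(\log x)^{-c_1} \ll \delta + (\log x)^{c_2/p}$, which holds trivially and constrains nothing. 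Even Paley--Zygmund only produces a tiny positive proportion of good $\cc$, since your first and $p$-th moments are off from each other by positive powers of $\log x$. The scenario where, say, half the $\cc$ have $\hat N_\cc = 0$ and the rest compensate in the mean is fully consistent with both of your bounds, yet makes $\#\mathcal E_\delta$ of order $\#S_\epsilon$.

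The paper's proof is structurally different and the difference is essential: after the archimedean reduction, Lemma \ref{BST-L-9.3} gives a \emph{pointwise} asymptotic for $\tilde N_\cc(D_\cc)$ (valid for $C$-admissible $\cc$) of the form $\sigma_\cc(W_0)\,|D_\cc|\,\Xi_\cc + O(x^2 e^{-(\log x)^{1/8}})$, with the Euler factor $\Xi_\cc \gg (\log x)^{-c_{d,e}}$ by Lemma \ref{BST-L-9.4}. This converts ``$\hat N_\cc$ small'' into ``$\sigma_\cc(W_0)$ small'' for each individual admissible $\cc$ (Proposition \ref{prop-na}), and the lower tail of $\sigma_\cc(W_0)$ is then bounded by a \emph{negative} moment via Rankin's trick with exponent $\kappa = 1/(d^2(e+1))$ (Proposition \ref{BST-P-9.6}), where the negative moment is finite precisely because local solvability together with Hensel lifting (Lemma \ref{hensel}) forces $\sigma_\cc(p^{k(x)}) \geq p^{-(\alpha+1)(e+1)}$ with small $\alpha$ for most $\cc$. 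Your argument never uses local solvability pointwise, which is exactly what prevents $\hat N_\cc$ from vanishing; without it, no moment computation over $\cc$ can rule out a large lower tail. Finally, the exceptional term $H^{d+1}/(\log x)^{dC}$ in the statement accounts for non-$C$-admissible $\cc$ (reducible $g_\cc$, large content, or integer zeros), not for $\cc$ with $|b_\cc^\epsilon|$ small as you suggest; the latter are handled separately at the dyadic summation stage in the deduction of Proposition \ref{thm-cha} via Lemma \ref{Lemma-5.2}.
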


\begin{proof}[Proof of Proposition \ref{thm-cha} assuming Proposition \ref{lcf-rs}]
Let $C \geq 1$ and $\delta = (\log x)^{-C}$. We observe that
  \begin{align*}
  	\#\left\{\cc \in S_\epsilon^{\mathrm{loc}}(H): N_\cc(x) \leq \frac{x^2}{(\log H)^{C}} \right\} \leq \sum_{\substack{1 \leq \tilde{H} \leq H \\ \tilde{H} = 2^\alpha}} E(H, \tilde{H}),
  \end{align*}
where
  \begin{align*}
  	E(H, \tilde{H}) & = \#\left\{\cc \in S_\epsilon^{\mathrm{loc}}(H, \tilde{H}): N_\cc(x) \leq \frac{x^2}{(\log H)^{C}} \right\} \\
  	& \leq \#\left\{\cc \in S_\epsilon^{\mathrm{loc}}(H, \tilde{H}): |N_\cc(x) - \hat{N}_\cc(x)| > \delta x^2 \right\}  \\
  	& \quad \quad \quad \quad \quad  + \#\left\{\cc \in S_\epsilon^{\mathrm{loc}}(H, \tilde{H}): \hat{N}_\cc(x) < 2 \delta x^2 \right\}.
  \end{align*}
  Using Propositions \ref{cha-2moment} and \ref{lcf-rs} together with (\ref{x-H}), thus we deduce that
  $$E(H, \tilde{H}) \ll \frac{H^{d+1}}{(\log H)^{A+1}},$$ 
  provided condition (\ref{def-tilde-H}) holds and $C$ is chosen to be sufficiently large in terms of $A, d$ and $e$. Recall the definition (\ref{def-hat-A}) of $\hat{H}$. By applying Lemma \ref{Lemma-5.2}, we have
	\begin{align*}
		\sum_{\substack{1 \leq \tilde{H} \leq 2H \\ \tilde{H} = 2^j}} E(H, \tilde{H}) & \leq \sum_{\substack{\tilde{H} \leq \hat{H} \\ \tilde{H} = 2^j}} \tilde{H} H^d + \sum_{\substack{\hat{H} \leq \tilde{H} \leq H \\ \tilde{H} = 2^j}} E(H, \tilde{H}) \\
		& \ll \hat{H} H^d + (\log H) \frac{H^{d+1}}{(\log H)^{A+1}} \\
		& \ll \frac{H^{d+1}}{(\log H)^A}.
	\end{align*}
\end{proof}

\subsection{The archimedean densities are rarely small}
First, we turn our attention to the archimedean aspect. 

For $\uu = (u, v) \in \RR^2 \setminus \{0\}$, let $(r, \theta) = (r(\uu), \theta(\uu))$ be its polar coordinates so that
$$u = r \cos \theta, \quad v = r \sin \theta, \quad \text{ with } r > 0 \text{ \ and \ }  0 \leq \theta < 2\pi.$$ 
For $0 \leq \theta < 2\pi$, we also define
$$g_\cc(\theta) = g_\cc(\cos \theta, \sin \theta).$$
Since $g_\cc(\uu)$ is a binary form of degree $d$, we have
$$g_\cc(\uu) = r^d g_\cc(\theta).$$

For $\epsilon \in \{\pm\}$ and $\cc \in S_\epsilon(H, \tilde{H})$, we choose $\uu_\cc = (u_\cc, v_\cc)$ with $|\uu_\cc| = 1$ satisfying $g_\cc(\uu_\cc) = b^\epsilon_\cc$. Thus, by the definitions (\ref{def-StH}) and (\ref{def-StH-}), we have
$$\frac{1}{2}\tilde{H} < |g_\cc(\uu_\cc)| \leq \tilde{H}.$$ 
Let $\theta_\cc = \theta(\uu_c)$, then $g_\cc(\theta_\cc) = r(\uu_c)^{-d} g_\cc(\uu_\cc)$. Since $1 \leq r(\uu_\cc) \leq \sqrt{2}$, we know
$$(\sqrt{2})^{-d} |g_\cc(\uu_\cc)| \leq |g_\cc(\theta_\cc)| \leq |g_\cc(\uu_\cc)|.$$
This implies that
\begin{align}\label{theta-H}
	(\sqrt{2})^{-d-2} \tilde{H} \leq |g_\cc(\theta_\cc)| \leq \tilde{H}.
\end{align}

\begin{lemma}\label{lbd-arc}
Let $\epsilon = \{\pm\}$ and $\cc \in S_\epsilon(H, \tilde{H})$.
    Then there exists a small constant $0 < \eta \ll 1$ such that if
    $$\Big|r/x - \Big(\frac{1}{2}\tilde{H}/|g_\cc(\theta_\cc)|\Big)^{1/d} \Big| < \eta^2 \ \text{ and } \ \norm{\theta - \theta_\cc} \leq (\log H)^{-2A},$$
    then
    $$\omega(r^d g_\cc(\theta); B) \gg 1.$$
    Here $\norm{\theta - \theta_\cc} = \min \{|\theta - \theta_\cc + 2\pi n|: n \in \ZZ \}.$
\end{lemma}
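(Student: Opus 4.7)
My plan is to reduce the lemma to property (iv) of Lemma~\ref{lemma-arch}, which guarantees that $\omega(y;B)\gg 1$ on an interval of length $\gg B^e$ centered at $\tfrac{\epsilon}{2}B^e$. Since $B^e = \tilde H x^d$, the target is to show that $y = r^d g_\cc(\theta)$ lies in this good interval $J_\epsilon$ centered at $\tfrac{\epsilon}{2}\tilde H x^d$, whenever $r$ and $\theta$ satisfy the two hypotheses.

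The first step is to observe that the stated choice hits the center exactly in the unperturbed case. Write $r_\ast = (\tfrac{1}{2}\tilde H/|g_\cc(\theta_\cc)|)^{1/d}$. By (\ref{theta-H}) we have $(\sqrt{2})^{-d-2}\tilde H \leq |g_\cc(\theta_\cc)| \leq \tilde H$, so $r_\ast$ is bounded above and below by constants depending only on $d$. Since $g_\cc(\theta_\cc) = b_\cc^\epsilon$ has sign $\epsilon$, a direct computation gives
\[
(xr_\ast)^d\, g_\cc(\theta_\cc) = x^d \cdot \frac{\tilde H}{2|g_\cc(\theta_\cc)|} \cdot g_\cc(\theta_\cc) = \frac{\epsilon}{2}\tilde H x^d,
\]
which is the center of $J_\epsilon$.

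Next I will quantify the two perturbations. For the radial one, since $|r/x - r_\ast|<\eta^2$ and $r_\ast\asymp_d 1$, the mean value theorem gives $|r^d - (xr_\ast)^d| \ll_d \eta^2 x^d$. Multiplying by $|g_\cc(\theta_\cc)|\leq \tilde H$ produces an error $\ll_d \eta^2 \tilde H x^d$ in $r^d g_\cc(\theta_\cc)$. For the angular one, since the coefficients of $g_\cc$ are bounded by $H$, the function $\theta \mapsto g_\cc(\cos\theta,\sin\theta)$ has derivative $\ll_d H$, so
\[
|g_\cc(\theta) - g_\cc(\theta_\cc)| \ll_d H \,\|\theta-\theta_\cc\| \leq H (\log H)^{-2A}.
\]
Since $r\asymp_d x$, the contribution to $r^d g_\cc(\theta) - r^d g_\cc(\theta_\cc)$ is $\ll_d x^d H (\log H)^{-2A}$.

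Combining the two error terms,
\[
r^d g_\cc(\theta) = \frac{\epsilon}{2}\tilde H x^d + O_d\!\left(\eta^2 \tilde H x^d + x^d H (\log H)^{-2A}\right).
\]
The hypothesis $\tilde H \geq 2H(\log H)^{-A}$ from (\ref{def-tilde-H}) forces $H(\log H)^{-2A} \leq \tfrac{1}{2}\tilde H (\log H)^{-A}$, so the angular error term is $o(\tilde H x^d)$ for $H$ large. Choosing $\eta$ sufficiently small (in terms of $d$, $e$, and the implied constant from Lemma~\ref{lemma-arch}(iv)) and $H$ sufficiently large in terms of $A$ then forces $r^d g_\cc(\theta) \in J_\epsilon$, whence $\omega(r^d g_\cc(\theta);B)\gg 1$ as desired. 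The only mildly delicate point is keeping track of the constants so that the angular error really is absorbed by $\tilde H x^d$; this is the reason the condition $\tilde H \geq 2H(\log H)^{-A}$ matters, and why the lemma uses the slack exponent $2A$ rather than $A$ in $\|\theta-\theta_\cc\|\leq (\log H)^{-2A}$.
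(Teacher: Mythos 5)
Your proposal is correct and follows essentially the same approach as the paper: reduce to Lemma~\ref{lemma-arch}(iv), observe that the unperturbed value $(xr_\ast)^d g_\cc(\theta_\cc)$ lands exactly on the center $\tfrac{\epsilon}{2}B^e = \tfrac{\epsilon}{2}\tilde H x^d$, and then bound the radial perturbation by $O_d(\eta^2 \tilde H x^d)$ and the angular one by $O_d(x^d H(\log H)^{-2A})$, absorbing both using (\ref{theta-H}) and (\ref{def-tilde-H}). The paper phrases the same computation via the normalized quantity $\epsilon y/B^e - 1/2$ and uses the full Taylor expansion in $\theta$ rather than a single mean-value bound, but these are cosmetic differences.
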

\begin{proof}
    It follows from Lemma \ref{lemma-arch} (iv) that there exists $0 < \eta \ll 1$ such that $\omega(y; B) \gg 1$ whenever 
    $$|\epsilon y/B^e - 1/2| < \eta.$$ 
    If $\big|r/x - \big(\frac{1}{2}\tilde{H}/|g_\cc(\theta_\cc)|\big)^{1/d} \big| < \eta^2$, then 
    \begin{align}\label{6.1.1}
    	(r/x)^d = \frac{\tilde{H}}{2|g_\cc(\theta_\cc)|} + O(\eta^2),
    \end{align}
    noting from (\ref{theta-H}) that we have $g_\cc(\theta_\cc) \gg_d \tilde{H}$.
	Moreover, Taylor's theorem yields
	$$\Big||g_\cc(\theta)| - |g_\cc(\theta_\cc)|\Big| \leq |g_\cc(\theta) - g_\cc(\theta_\cc)| \ll \sum_{k \geq 1} \norm{\theta - \theta_\cc}^k \cdot \Big| \frac{\pa^k g_\cc}{\pa \theta^k} (\theta_\cc) \Big| \ll H(\log H)^{-2A}. $$
	Since $r \leq \sqrt{2}x$, it follows that
	\begin{align}\label{6.1.2}
		r^d g_\cc(\theta) = r^d g_\cc(\theta_\cc) + O(x^dH(\log H)^{-2A}). 
	\end{align}
	Recall from (\ref{def-B}) that $B^e = \tilde{H}x^d$. It follows from (\ref{theta-H}) that 
	$$\frac{|g_\cc(\theta_\cc)|}{\tilde{H}} \leq 1.$$ By (\ref{6.1.1}) and (\ref{6.1.2}), we obtain 
	\begin{align*}
	    \eps r^d g_\cc(\theta) / B^e - 1/2 &  = r^d |g_\cc(\theta)| / B^e - 1/2  \\
		 &  = \frac{2r^d |g_\cc(\theta_\cc)| - \tilde{H}x^d + O(x^dH(\log H)^{-2A})}{2\tilde{H}x^d} \\
		 &  = O \left( \frac{\eta^2|g_\cc(\theta_\cc)|}{\tilde{H}} + \frac{H}{\tilde{H}(\log H)^{2A}} \right) \\
		 & = O\left(\eta^2 + \frac{H}{\tilde{H}(\log H)^{2A}} \right),
	\end{align*}
	The right-hand side is $\ll \eta$, since $\tilde{H} \geq 2H(\log H)^{-A}$. The statement of the lemma follows by choosing $\eta > 0$ small enough.
\end{proof}

For $\epsilon = \{\pm\}$ and $\cc \in S_\epsilon(H, \tilde{H})$, define 
$$r_0 = \left(\frac{1}{2}\tilde{H}/|g_\cc(\theta_\cc)|\right)^{1/d} x.$$
From (\ref{theta-H}), we know $2^{-1/d} x \leq r_0 \leq \sqrt{2} x$.
We now introduce the region 
\begin{equation}\label{def-Dc}
D_\cc = \left\{(u, v) \in [0, x]^2 : 
\begin{array}{l}
|r - r_0| \leq \eta^2 x, \\
\|\theta - \theta_\cc\| \leq 3(\log H)^{-2A}, \\
\|\theta \pm \frac{\pi}{2}\| \geq (\log H)^{-2A}
\end{array} \right\}.
\end{equation}
The area of $D_\cc$ satisfies $|D_\cc| \leq x^2$. Moreover, by (\ref{theta-H}), we know
\begin{align}\label{bound-D}
	\nonumber |D_\cc| & \gg \pi \left((r_0 + \eta^2x)^2 - (r_0 - \eta^2x)^2 \right) \cdot (\log H)^{-2A} \\
	& = 4\pi \eta^2 \left(\frac{1}{2}\tilde{H}/|g_\cc(\theta_\cc)|\right)^{1/d} x^2(\log H)^{-2A} \\
	\nonumber & \gg_\eta x^2(\log H)^{-2A}.
\end{align}

Recall from (\ref{def-lcf}) the definition of the localized counting function. By Lemma \ref{lbd-arc}, we have
\begin{align}
	\#\{ \cc \in S^{\mathrm{loc}}(H, \tilde{H}): \hat{N}_\cc(x) < \delta x^2 \} \leq \#\{ \cc \in S^{\mathrm{loc}}(H, \tilde{H}): \tilde{N}_\cc(D_\cc) \ll \delta x^2 \},
\end{align}
where we define
$$\tilde{N}_\cc(D_\cc) = \sum_{(m, n) \in {D_\cc \cap \ZZ^2}} \gamma(W, g_\cc(m,n)).$$

\subsection{Results on binary forms}

In this section, we collect several results concerning binary forms that will be needed in the subsequent analysis. For a vector $\cc \in (c_0, \dots, c_d) \in \ZZ^{d+1}$, we define its \emph{content} by 
$$h_\cc = \gcd(c_0, \dots, c_d).$$
We define the content of a binary form $g_\cc$ in terms of the content of its coefficient vector.

We recall the definition (\ref{def-W}) of the modulus $W$. As in \cite[Section 9]{BST}, the very small prime factors of $W$ or the factors of $W$ that share a common factor with $h_\cc$ need to be treated separately.
Let $M_{d, K}$ be a sufficiently large positive constant that only depends on $d$ and the number field $K$. For any $\cc \in S^{\mathrm{loc}}(H, \tilde{H})$, we define
\begin{align}\label{def-W0}
	W_0 = \prod_{\substack{p \leq M_{d,K} \text{ or } p \mid h_\cc}} p^{k(x)}
\quad \text{and} \quad
W_1 = \prod_{\substack{M_{d,K} < p \leq w \\ p \nmid h_\cc}} p^{k(x)}.
\end{align}
We also introduce 
\begin{align}\label{def-sigma}
	\sigma_\cc(W_0) = \frac{1}{W_0^{e+1}} \#\{(\xx, s, t) \in (\ZZ / W_0 \ZZ)^{e+2}: \NNN_K(\xx) \equiv g_\cc(s,t) \, (\mod W_0)\}.
\end{align}
Moreover, we set
\begin{align}\label{def-Pc}
	P_\cc(w) = \prod_{\substack{M_{d, K} < p \leq w \\ p \nmid h_\cc}} p.
\end{align}

Let \(g\in \ZZ[s,t]\) be a binary form of degree \(d\). We say that $g$ is \emph{separable}, if the discriminant of $g$ is non-zero. Let
\begin{align}
	\lambda_g(q) = \# \{(u, v) \in (\ZZ / q\ZZ)^2: g(u, v) \equiv 0 \, (\mod q) \},
\end{align}
for any integer $q \geq 1$.
We now prove an upper bound for $\lambda_g(p^k)$ when $g$ is separable.
\begin{lemma}\label{bound-lambda}
	Let $p$ be a prime and let $k \in \NN$. Let $g \in \ZZ[s, t]$ be a binary form of degree $d \geq 1$. Assume that $g$ is separable. Let $h_g$ be the content of $g$ and let $\sigma = v_p(h_g).$
	\begin{enumerate}
		\item If $\sigma \geq k$ then $\lambda_g(p^k) = p^{2k}$.
		\item If $\sigma < k$ then
	$$\lambda_g(p^k) \leq (d+1) \min \left\{(k+1) p^{k(2 - \frac{1}{d}) + \frac{\sigma}{d}}, p^{2k-1} \right\}. $$ 
	\end{enumerate}
\end{lemma}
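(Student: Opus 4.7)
Part (1) is immediate: when $\sigma \geq k$ we have $p^k \mid h_g \mid g(u,v)$ for every pair $(u,v)$, so $\lambda_g(p^k) = p^{2k}$.

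For part (2), the plan begins by factoring out the $p$-part of the content. Writing $g = p^{\sigma} g^*$ with $p \nmid h_{g^*}$, the congruence $p^k \mid g(u,v)$ is equivalent to $p^{k-\sigma} \mid g^*(u,v)$, which depends only on $(u,v) \bmod p^{k-\sigma}$; hence each residue class modulo $p^{k-\sigma}$ satisfying it lifts to $p^{2\sigma}$ classes modulo $p^k$, giving $\lambda_g(p^k) = p^{2\sigma}\,\lambda_{g^*}(p^j)$ with $j := k-\sigma \geq 1$. Rearranging the exponents, the stated estimate reduces to
\[
\lambda_{g^*}(p^j) \leq (d+1)\min\!\Bigl\{(k+1)\,p^{j(2-1/d)},\; p^{2j-1}\Bigr\}.
\]
The cruder bound on the right follows at once from reduction modulo $p$: the form $g^* \bmod p$ is nonzero and homogeneous of degree $d$, so its zero locus in $\FF_p^2$ is a union of at most $d$ lines through the origin and contains at most $1 + d(p-1) \leq dp$ points; lifting each zero trivially gives $p^{2(j-1)}$ preimages in $(\ZZ/p^j\ZZ)^2$, whence $\lambda_{g^*}(p^j) \leq d\,p^{2j-1}$.

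For the finer bound I propose a slicing argument combined with a recursion. Partition solutions $(u,v) \bmod p^j$ according to whether $p \mid \gcd(u,v)$. In the coprime case, by the $(u,v) \leftrightarrow (v,u)$ symmetry I may assume $p \nmid v$; the substitution $u = vx$ is then a bijection on $\ZZ/p^j\ZZ$ and reduces the count to $\varphi(p^j)\cdot N_G(p^j)$, where $N_G(p^j)$ counts zeros of the dehomogenisation $G(x) = g^*(x,1)$ modulo $p^j$. Separability of $g^*$ forces $G$ to be separable, after an $\SL_2(\ZZ_p)$-change of variables to dispose of the degenerate subcase $p \mid c_0$ in which $\deg(G \bmod p) < d$; a classical Hensel/Newton-polygon argument then yields $N_G(p^j) \leq C_d\,(j+1)\,p^{j(1-1/d)}$, and combining with the symmetric case $p \nmid u$ gives a contribution of $\ll_d (j+1)\,p^{j(2-1/d)}$. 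On the locus $p \mid \gcd(u,v)$ I set $u = pu'$, $v = pv'$ and exploit the identity $g^*(u,v) = p^d g^*(u',v')$ to produce the recursion
\[
\lambda_{g^*}(p^j) \leq C_d\,(j+1)\,p^{j(2-1/d)} + p^{2(d-1)}\,\lambda_{g^*}(p^{j-d}) \qquad (j > d),
\]
together with the trivial bound $\lambda_{g^*}(p^j) \leq p^{2j}$ in the base range $j \leq d$. Iterating, the exponent simplifies to $(2-1/d)(j-id) + 2i(d-1) = (2-1/d)j - i$, which decays geometrically in $i$, so the sum telescopes to $O_d((j+1)\,p^{j(2-1/d)})$; absorbing the implicit constant into the factor $d+1$ and using $j \leq k$ produces the desired inequality.

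The principal obstacle will be establishing the one-variable estimate $N_G(p^j) \leq C_d\,(j+1)\,p^{j(1-1/d)}$ for separable $G \in \ZZ[x]$, uniformly in $p$; it is here that separability of $g^*$ is decisive, as nonvanishing of $\mathrm{disc}(g^*)$ controls the number of $p$-adic accumulation points of zeros through $v_p(\mathrm{disc}(G))$ and forces Hensel-style decay. A secondary delicate point is normalising away the degeneracy when $p \mid c_0$, which requires choosing $t \in \ZZ_p$ with $p \nmid g^*(t,1)$—possible because the content of $g^*$ is coprime to $p$, so $G(t) \bmod p$ is a nonzero polynomial of degree $\leq d$ with at most $d$ roots—and applying the shear $(u,v) \mapsto (u + tv, v)$ before dehomogenising.
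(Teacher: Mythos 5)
Your argument takes a genuinely different route from the paper's. The paper slices $\lambda_g(p^k)=\sum_{n}\lambda_{g_n}(p^k)$ with $g_n(m)=g(m,n)$, invokes the one-variable bound of \cite[Lemma 6.1]{BST} directly on each $g_n$, and then sums over $n$ stratified by $v_p(n)$, using the content bound $v_p(h_{g_n})\leq \sigma+d\,v_p(n)$; the $(k+1)$ factor simply comes from the sum over the $k+1$ possible values of $v_p(n)$. You instead first strip the $p$-part of the content to reduce to $\lambda_{g^*}(p^j)$ with $j=k-\sigma$ (this reduction is correct and matches the exponent bookkeeping), and then dehomogenise and run a recursion on $v_p(\gcd(u,v))$. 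Both approaches ultimately rest on the same one-variable estimate; the paper has it as a citation, whereas you flag it as ``the principal obstacle'' and leave it unproven, so as it stands your proposal has a gap exactly at the input that does the real work.

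There is also a concrete error in your normalisation step. To deal with $p\mid c_0$ you propose a shear $(u,v)\mapsto(u+tv,v)$ with $t\in\ZZ_p$ chosen so that $p\nmid g^*(t,1)$, arguing that $G\bmod p$ is a nonzero polynomial of degree $\leq d$ and hence has at most $d$ roots. That reasoning only produces such a $t$ when $p>d$: for $p\leq d$ the nonzero polynomial $G\bmod p$ can vanish identically on $\FF_p$ (e.g.\ $t^p-t$), and indeed the homogeneous form $g^*\bmod p$ can vanish on all of $\FF_p^2$ once $d\geq p+1$ (e.g.\ $u^pv-uv^p$). So the shear does not exist in general, and your finer bound is unproven for the finitely many primes $p\leq d$; these cannot be dismissed trivially since $p^{2j}\leq C_d(j+1)p^{j(2-1/d)}$ fails for large $j$. (Your cruder bound survives, since for $p\leq d$ one has $p^2\leq dp$ trivially.) The cleanest repair is precisely what the paper does: use \cite[Lemma 6.1]{BST}, which already handles $p\mid c_0$ via the content parameter and needs no nondegeneracy of the leading coefficient. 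Finally, ``absorbing the implicit constant into the factor $d+1$'' is not legitimate as written: your geometric series, the two symmetric dehomogenisations, and the telescoping all inflate $C_d$ well beyond $d+1$, so your method yields the stated bound only with a larger $d$-dependent constant, whereas the paper's stratification produces the clean $d(k+1)$ and $(d+1)$ directly.
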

\begin{proof}
    The first statement is trivial. Now we assume that $\sigma < k$. 
    For a fixed integer $n$, we denote
    \begin{align}\label{gnm}
    	g_n(m) = g(m, n) = c_0m^d + (c_1n)m^{d-1} + \dots + (c_{d-1}n^{d-1})m + c_d n^d,
    \end{align}
    as a polynomial in $m$ with coefficients depending on $n$. Note that when $n$ is non-zero, the polynomial $g_n$ is separable if and only if the binary form $g$ is separable.
	We know
	\begin{align}\label{hgn}
		h_{g_n} = \gcd(c_0, c_1n, \dots, c_dn^d) \leq h_g \cdot n^d.
	\end{align}
	Let $\sigma_n = v_p(h_{g_n})$. According to \cite[Lemma 6.1]{BST}, we know $\lambda_{g_n}(p^k) = p^k$ when $\sigma_n \geq k$. If $\sigma_n < k$, then
	\begin{align}\label{BST-Lemma-6.1}
		\lambda_{g_n}(p^k) \leq d \min \left\{ p^{k(1 - \frac{1}{d} + \frac{\sigma_n}{d})}, p^{k-1} \right\}.
	\end{align}
	By using (\ref{hgn}) and the first bound of (\ref{BST-Lemma-6.1}), we obtain
	\begin{align*}
		\lambda_g(p^k) & = \sum_{1 \leq n \leq p^k} \lambda_{g_n}(p^k) \\
		& \leq d\sum_{1 \leq n \leq p^k} p^{k(1-1/d) + \sigma_n/d} \\
		& \leq d\sum_{1 \leq n \leq p^k} p^{k(1-1/d) + \sigma_n/d + v_p(n)} \\
		& \leq d \sum_{0 \leq v_p(n) \leq k} p^{k-v_p(n)} \cdot p^{k(1 - 1/d) + \sigma_n/d + v_p(n)} \\
		& = d \sum_{0 \leq v \leq k} p^{k + k(1 - 1/d) + \sigma_n/d} \\
		& \leq d(k+1) p^{k(2 - \frac{1}{d}) + \frac{\sigma_n}{d}}.
	\end{align*}
	By using the second bound of (\ref{BST-Lemma-6.1}), we know
	\begin{align*}
		\lambda_g(p^k) & = \sum_{1 \leq n \leq p^k} \lambda_{g_n}(p^k) \\
		& = \sum_{\substack{1 \leq n \leq p^k \\ p | n}} \lambda_{g_n}(p^k) + \sum_{\substack{1 \leq n \leq p^k \\ p \nmid n}} \lambda_{g_n}(p^k) \\
		& \leq p^{k-1} \cdot p^{k} + p^k \cdot d p^{k-1} = (d+1)p^{2k-1}.
	\end{align*}
	Combining these two estimates completes the proof
\end{proof}

Next, let $\tau$ be the divisor function.
We shall make frequent use of the following bound on the average size of $\tau$ evaluated at binary form arguments. 

\begin{lemma}\label{poly-tau}
Let $\delta>0$ and $C>0$. Let $g \in \ZZ[s, t]$ be a separable binary form of degree $d\geq 1$. Let $\|g\|$ be the maximum modulus of the coefficients of $g$ and assume that the content $h_g$ of $g$ is at most $(\log x)^C$. Then there exist a positive constant $K$, depending only on $\delta$, $d$ and $C$, such that for $x \geq \|g\|^{\delta}$ we have
$$
\sum_{\substack{m, n\leq x}} \tau(|g(m, n)|)^C \ll x^2 (\log x)^{K}.
$$
\end{lemma}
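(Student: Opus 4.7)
The plan is to fix $n$, apply the one-variable Nair--Erd\H{o}s--Shiu-type polynomial analogue (cf.\ \cite[Lemma 6.2]{BST}), and sum over $n \in [1, x]$. Before doing so I would carry out a few preparatory reductions. Since $g$ is separable, the vanishing $c_0 = 0$ (resp.\ $c_d = 0$) forces $t \mid g$ (resp.\ $s \mid g$) with multiplicity exactly one, so I can write $g = s^\alpha t^\beta g^*$ with $\alpha, \beta \in \{0, 1\}$ and $g^*$ separable satisfying $g^*(0,1) g^*(1,0) \neq 0$. The inequality $\tau(|g(m,n)|)^C \leq \tau(m)^{C\alpha} \tau(n)^{C\beta} \tau(|g^*(m,n)|)^C$, together with the standard moment $\sum_{k \leq x} \tau(k)^{K} \ll x (\log x)^{2^K - 1}$, reduces to the case $c_0 c_d \neq 0$, and pairs with $mn = 0$ contribute only $O(x (\log x)^{O(1)})$.

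Now fix $n$ with $1 \leq |n| \leq x$ and set $g_n(m) := g(m,n)$. This is a separable polynomial of degree $d$ in $m$ (its discriminant is a nonzero scalar multiple of a power of $n$ times the discriminant of $g$) with $\|g_n\| \leq (d+1)\|g\| |n|^d \leq (d+1)\|g\| x^d$. Taking $\delta' := \delta/(2d+1)$ and using the hypothesis $x \geq \|g\|^\delta$ yields $x \geq \|g_n\|^{\delta'}$. Write $g_n = h_{g_n} \tilde g_n$ with $\tilde g_n$ primitive; then $\tilde g_n$ is separable of degree $d$ with content $1$ and $\|\tilde g_n\| \leq \|g_n\|$. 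Applying \cite[Lemma 6.2]{BST} to $\tilde g_n$ gives $\sum_{m \leq x} \tau(|\tilde g_n(m)|)^C \ll x (\log x)^{K_1}$, and submultiplicativity of $\tau$ yields
\[
\sum_{m \leq x} \tau(|g_n(m)|)^C \leq \tau(h_{g_n})^C \sum_{m \leq x} \tau(|\tilde g_n(m)|)^C \ll \tau(h_{g_n})^C \, x (\log x)^{K_1}.
\]

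It remains to prove $\sum_{n \leq x} \tau(h_{g_n})^C \ll x (\log x)^{K_2}$. The key observation is the divisibility $h_{g_n} \mid h_g \cdot n^d$: writing $g = h_g g'$ with $g' = \sum c_i' s^i t^{d-i}$ primitive, any prime $p$ dividing $h_{g'_n}$ must satisfy $p \mid c_i' n^i$ for every $i$; since $g'$ has content $1$, there exists some $i_p \geq 1$ with $p \nmid c'_{i_p}$, forcing $v_p(h_{g'_n}) \leq i_p v_p(n) \leq d v_p(n)$. Hence $h_{g'_n} \mid n^d$ and so $h_{g_n} \mid h_g n^d$. Submultiplicativity gives $\tau(h_{g_n}) \leq \tau(h_g) \tau(n)^d \leq (\log x)^{o(1)} \tau(n)^d$, using $h_g \leq (\log x)^C$ and $\tau(k) \ll k^{\eps}$, and then the standard moment $\sum_{n \leq x} \tau(n)^{Cd} \ll x (\log x)^{2^{Cd}-1}$ completes the argument with $K = K_1 + 2^{Cd}$.

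The main potential obstacle is the divisibility $h_{g'_n} \mid n^d$, which can fail if one is not careful about primes simultaneously dividing $h_g$ and $n$; this is sidestepped by first factoring out $h_g$ globally to reduce to the $h_g = 1$ case. A secondary technical point is verifying that the hypothesis of the polynomial-case lemma holds uniformly in $n$, which reduces to the elementary inequality $(d+1)^{\delta'} \|g\|^{\delta'} x^{d\delta'} \leq x$ under the standing assumption $x \geq \|g\|^\delta$ and the choice $\delta' = \delta/(2d+1)$.
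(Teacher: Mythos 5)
Your proof is correct and follows essentially the same route as the paper's: fix $n$, invoke the one-variable moment bound \cite[Lemma 6.2]{BST} applied to the primitive part $\hat g_n$ of $g_n$, and control the content via the divisibility $h_{g_n}\mid h_g\,n^d$ together with submultiplicativity of $\tau$. Your preliminary reduction to $c_0c_d\neq 0$ and the explicit check that $x\geq\|g_n\|^{\delta'}$ uniformly in $n$ are sensible additions of care that the paper passes over implicitly (it uses the observation that $g_n$ is separable for $n\neq 0$ whenever $g$ is), but the argument is the same in substance.
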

\begin{proof}
	We use the notation (\ref{gnm}). Let $h_{g_n}$ be the content of $g_n$ so that $g_n(m) = h_{g_n} \hat{g}_n(m)$, where $\hat{g}_n$ has content $1$. By \cite[Lemma 6.2]{BST}, we know
	$$\sum_{m \leq x} \tau(|g_n(m)|)^C \ll x(\log x)^{K'},$$
	for some $K' > 0$, depending only on $\delta, d$ and $C$. 
	By using the assumption $h_g \leq (\log x)^C$, we know $\tau(h_g) \ll h_g^{1/C} \ll \log x$. We then apply (\ref{sub-tau}) and (\ref{hgn}) to obtain
	\begin{align*}
		\sum_{m, n \leq x} \tau(|g(m, n)|)^C & = \sum_{n \leq x} \sum_{m \leq x} \tau(|g_n(m)|)^C  \\
		& \leq \sum_{n \leq x} \tau(h_{g_n})^C \sum_{m \leq x} \tau(|\hat{g}_n(m)|)^C \\
		& \ll x(\log x)^{K'} \sum_{n \leq x} \tau(h_g)^C \tau(n^d)^C \\
		& \ll x(\log x)^{K' + 1} \sum_{n \leq x} \tau(n)^{dC} \\
		& \ll x(\log x)^K,
	\end{align*}
	for some $K > 0$, depending only on $\delta, d$ and $C$. 
\end{proof}

\subsection{A transition to non-archimedean densities}
Let $K$ be a number field with ring of integers $\mathfrak{o}_K$. For an integral ideal $\mathfrak{a} \subset \mathfrak{o}_K$ we define its norm by
$$\mathrm{N}\mathfrak{a}=\# (\mathfrak{o}_K/\mathfrak{a}).$$
The \emph{Dedekind zeta function} associated to $K$ is defined for $\mathrm{Re}(s)>1$ by
$$
\zeta_K(s)=\sum_{\substack{\mathfrak{a} \subset \mathfrak{o}_K\\\mathfrak{a}\neq (0)}} \frac{1}{(\mathrm{N} \mathfrak{a})^s} =
\sum_{m=1}^\infty \frac{r_K(m)}{m^{s}},
$$
where
$$
r_K(m)=\#\{\mathfrak{a} \subset \mathfrak{o}_K: \mathrm{N} \mathfrak{a}=m\}.
$$
It is well known that $\zeta_K(s)$ is multiplicative and it admits a meromorphic continuation to the entire complex plane with a simple pole at $s=1$. 

For any prime $p$ and any integer $k \geq 1$, we define
$$
\alpha_p = \Big( 1 - \frac{1}{p} \Big)^{-1} \prod_{\mathfrak{p} | p} \Big( 1 - \frac{1}{\mathrm{N} \mathfrak{p}} \Big)
$$
and 
$$
\beta_{p^k} = p^k \sum_{j \geq k} \frac{r_K(p^j)}{p^j} \prod_{\mathfrak{p} | p} \left( 1 - \frac{1}{\mathrm{N} \mathfrak{p}} \right).
$$
Next, define the multiplicative function $\tilde{r}_K(n)$ on prime powers by
\[
\tilde{r}_K(p^j) = 
\begin{cases} 
r_K(p^j), & \text{if } j < k(x), \\
\alpha_p^{-1} \beta_{p^{k(x)}}, & \text{if } j \geq k(x).
\end{cases}
\]
Define also $b = \tilde{r}_K * \mu$. Then \cite[Formula (7.27)]{BST} shows that
\begin{align}\label{bound-g}
	|b(k)| \leq \tau(k)^{e+1}.
\end{align}

We now state the main result of this section.
\begin{prop}\label{prop-na}
	Let $A, C \geq 1$ be fixed. Take $B = \tilde{H}^{1/e} x^{d/e}$ and assume (\ref{def-tilde-H}) holds. For $\epsilon \in \{\pm\}$ and $\delta = (\log x)^{-C}$,  there exists a constant $M = M_{C, d, K} \geq 1$ such that
\begin{align*}
& \#\left\{ \cc\in S_\epsilon^{\textnormal{loc}}(\tilde H, H): 
\hat N_\cc(x) <\delta x^2\right\} 
 \ll \frac{H^{d+1}}{(\log x)^{dC}}\\
& \quad \quad \quad \quad \quad \quad \quad \quad \quad \quad \quad \ + \#
\left\{ \cc\in S_\epsilon^{\textnormal{loc}}(\tilde H, H):  \sigma_{\cc}(W_0) \leq \delta M (\log x)^{c_{d,e}} (\log H)^{2A}\right\},
\end{align*}
where $c_{d,e} = e + d^2(d+1)^{e+2}$.
\end{prop}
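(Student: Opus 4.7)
My approach is to prove the proposition by contrapositive, via a pointwise lower bound of the form
\[
\hat N_\cc(x) \gg \frac{\sigma_\cc(W_0)\, x^2}{(\log x)^{c_{d,e}}(\log H)^{2A}},
\]
valid for all $\cc \in S^{\mathrm{loc}}_\epsilon(H, \tilde H)$ outside an exceptional set of cardinality $\ll H^{d+1}/(\log x)^{dC}$. Given such a bound, the stated dichotomy follows at once: if $\hat N_\cc(x) < \delta x^2$ and $\cc$ is not exceptional, then $\sigma_\cc(W_0) \ll \delta (\log x)^{c_{d,e}}(\log H)^{2A}$, which places $\cc$ in the second set on the right-hand side (for a suitable constant $M = M_{C,d,K}$).

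To derive the lower bound, I would first invoke Lemma \ref{lbd-arc} and the reduction immediately preceding this section to pass from $\hat N_\cc(x)$ to $\tilde N_\cc(D_\cc) = \sum_{(m,n) \in D_\cc \cap \ZZ^2} \gamma(W, g_\cc(m,n))$. Exploiting multiplicativity in the modulus, I would factorize $\gamma(W,n) = \gamma(W_0, n)\gamma(W_1,n)$, and then rewrite $\gamma(W_1, n)$ in terms of the multiplicative function $\tilde r_K$ through the identity $\gamma(W_1, n) = \bigl(\prod_{p \mid W_1}\alpha_p\bigr) \prod_{p \mid W_1} \tilde r_K(p^{v_p(n)})$, valid whenever $v_p(n) < k(x)$ for every $p \mid W_1$. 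The complementary contribution, where some prime $p \leq w$ divides $g_\cc(m,n)$ to the $k(x)$-th power, is bounded using Lemma \ref{bound-lambda} and is negligible. Applying Möbius inversion via $\tilde r_K = b \ast 1$ then yields an expansion
\[
\tilde N_\cc(D_\cc) = \Bigl(\prod_{p \mid W_1} \alpha_p\Bigr) \sum_{\substack{d \geq 1 \\ d \mid P_\cc(w)^\infty}} b(d) \sum_{\substack{(m,n) \in D_\cc \cap \ZZ^2 \\ d \mid g_\cc(m,n)}} \gamma(W_0, g_\cc(m,n)) + \text{(negligible)},
\]
isolating $d = 1$ as the main term.

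For the main term, the Chinese Remainder Theorem combined with the definition (\ref{def-sigma}) of $\sigma_\cc(W_0)$ gives the equidistribution statement
\[
\sum_{(m,n) \in D_\cc \cap \ZZ^2} \gamma(W_0, g_\cc(m,n)) = \sigma_\cc(W_0)\, |D_\cc| + (\text{boundary error}),
\]
because $|D_\cc| \gg x^2/(\log H)^{2A}$ from (\ref{bound-D}) comfortably exceeds $W_0^2$, which is polylogarithmic in $x$ by (\ref{def-W0}) and (\ref{def-kx}). A Mertens-type estimate yields $\prod_{p \mid W_1}\alpha_p \gg 1$ uniformly, so this produces a main term of the desired size. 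The tail terms $d \geq 2$ would be handled using $|b(d)| \leq \tau(d)^{e+1}$ from (\ref{bound-g}), a crude pointwise bound on $\gamma(W_0, \cdot)$, and a reordering of summation; everything reduces to divisor sums of the shape $\sum_{m, n \leq x} \tau(g_\cc(m,n))^{K}$ for some $K = K(d,e)$, which Lemma \ref{poly-tau} controls on average over $\cc \in S(H)$ by $\ll H^{d+1} x^2 (\log x)^{K'}$. Markov's inequality then removes the set of $\cc$ for which the tail exceeds the main term.

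The hard part will be the delicate bookkeeping of polylogarithmic factors: the exponent $c_{d,e} = e + d^2(d+1)^{e+2}$ is tight enough that losses must be tracked carefully from several sources, namely (i) the $(e+1)$-th power in $|b(d)| \leq \tau(d)^{e+1}$, (ii) the iterated Cauchy--Schwarz applications when extracting tail errors from divisor sums over binary forms via Lemma \ref{poly-tau}, and (iii) the $(\log H)^{-2A}$ loss inherent in the angular width of $D_\cc$. A secondary technical point is verifying that the $W_0$-equidistribution step closes, which relies on $W_0$ being polylogarithmic in $x$; this uses that only primes below $M_{d,K}$ or dividing $h_\cc$ contribute to $W_0$, together with a separate treatment of the sparse set where $h_\cc$ itself is abnormally large.
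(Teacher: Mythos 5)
Your high-level framing is right: you reduce the claim to showing that $\hat N_\cc(x)\gg \sigma_\cc(W_0)\,x^2/[(\log x)^{c_{d,e}}(\log H)^{2A}]$ for all $\cc$ outside an exceptional set of size $\ll H^{d+1}/(\log x)^{dC}$, and the reduction from $\hat N_\cc(x)$ to $\tilde N_\cc(D_\cc)$ via Lemma~\ref{lbd-arc}, the CRT-factorization $\gamma(W,\cdot)=\gamma(W_0,\cdot)\gamma(W_1,\cdot)$, and the appeal to $\tilde r_K$ and M\"obius inversion all match the paper. However, there is a genuine gap in how you propose to handle the resulting sum over $d$.

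You isolate the $d=1$ term as the main term and treat the $d\geq 2$ terms as an error to be bounded in absolute value via $|b(d)|\leq\tau(d)^{e+1}$, Lemma~\ref{poly-tau}, and Markov's inequality. This cannot work. By multiplicativity, the absolute tail satisfies
\[
\sum_{\substack{d\geq 2\\ d\mid P_\cc(w)^\infty}}\frac{|b(d)|\,\lambda_{g_\cc}(d)}{d^2}\;\asymp\;\prod_{p\mid P_\cc(w)}\Bigl(1+\sum_{j\geq 1}\tfrac{|b(p^j)|\lambda_{g_\cc}(p^j)}{p^{2j}}\Bigr)-1,
\]
and since $|b(p)|\lambda_{g_\cc}(p)/p^2\asymp c/p$ for most $p$, this product is a \emph{positive} power of $\log x$. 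In other words, the $d\geq 2$ contribution is polylogarithmically larger than the $d=1$ term, not smaller. For almost every $\cc$ (not merely a Markov-removable sparse set) the absolute tail dwarfs the main term, so the lower bound you would extract is vacuous. The signed cancellation in $b$ (which takes both signs: $b(p)=r_K(p)-1$ can equal $-1$) is not a nuisance to be discarded — it is the mechanism that produces the $(\log x)^{-c_{d,e}}$ factor in the statement. The paper therefore evaluates the \emph{full} sum over $d$ uniformly in Lemma~\ref{BST-L-9.3}, obtaining $\sigma_\cc(W_0)\,|D_\cc|\cdot\prod_{p\mid P_\cc(w)}\alpha_p\xi_p$ plus an error of size $O_C(x^2\exp(-(\log x)^{1/8}))$, and then Lemma~\ref{BST-L-9.4} lower-bounds the Euler product $\prod_p\alpha_p\xi_p\gg(\log x)^{-c_{d,e}}$ \emph{deterministically}, by showing $\alpha_p\xi_p\geq 1-(e+d^2(d+1)^{e+2})/p+O(1/p^2)$ for $p>M_{d,K}$ and applying Mertens. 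No averaging over $\cc$ or Markov step enters here.

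A second, related misconception: you locate the exceptional set of size $\ll H^{d+1}/(\log x)^{dC}$ in "the set of $\cc$ for which the tail exceeds the main term," and trace the exponent $c_{d,e}$ to losses in Cauchy--Schwarz/divisor-sum estimates. Neither is right. In the paper the exceptional set consists precisely of those $\cc$ for which $g_\cc$ fails to be $C$-admissible — i.e., $g_\cc$ is not separable, has a nonzero integer zero, or has content exceeding $(\log x)^C$ — and its size is bounded by counting reducible forms and forms with large content. The exponent $c_{d,e}=e+d^2(d+1)^{e+2}$ comes entirely from the Euler-product lower bound (Lemma~\ref{BST-L-9.4}): $e$ from $\alpha_p$ and $d^2(d+1)^{e+2}$ from bounding $\sum_{j\geq 1}|b(p^j)|\lambda_{g_\cc}(p^j)/p^{2j}$ via Lemma~\ref{bound-lambda}. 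To repair your proof, you would need to replace the "main term $d=1$ plus absolute error" decomposition with an asymptotic evaluation of the full $d$-sum, isolating the Euler product, followed by a separate deterministic lower bound on that product.
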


binary forms defined below plays an important role in the subsequent analysis.
\begin{definition}[\(C\)-admissible binary forms]\label{def-C-adm}
	Let \(g\in \ZZ[s,t]\) be a binary form of degree \(d\). Given a constant \(C\ge 1\), we say that \(g\) is \emph{\(C\)-admissible} if the following conditions hold:
\begin{itemize}
	\item $g$ is separable.
	\item $g$ has no integer zero, that is, for any $(m, n) \in \ZZ^2 \setminus \{(0,0)\}$, we have $g(m, n) \neq 0$.
	\item $g$ has content $\leq (\log x)^C$.
\end{itemize}
\end{definition}
The following result is the key step in the proof of Proposition \ref{prop-na}.
\begin{lemma}\label{BST-L-9.3}
	Let $C \geq 1$ and let $\cc \in S(H, \tilde{H})$. Suppose that $g_\cc$ is $C$-admissible. Then
	\begin{align*}
		\tilde{N}_\cc(D_\cc) = \sigma_\cc(W_0)|D_\cc| \sum_{\substack{k \in \mathbb{N} \\ p \mid k \Rightarrow p \mid P_\cc(w)}} 
\frac{c_\cc(w) b(k) \lambda_{g_\cc}(k)}{k^2} 
+ O_C \left( x^2 \exp \left( - (\log x)^{1/8} \right) \right),
	\end{align*} 
	where $D_\cc$ is given by (\ref{def-Dc}), $P_\cc(w)$ is given by (\ref{def-Pc}) and
	$$c_\cc(w) = \prod_{p | P_\cc(w)} \alpha_p.$$
\end{lemma}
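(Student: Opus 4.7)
The plan is to exploit the multiplicativity of $\gamma(q, a)$ in $q$ together with the coprimality $\gcd(W_0, W_1) = 1$ guaranteed by (\ref{def-W0}) to factor
\begin{align*}
\gamma(W, g_\cc(m,n)) = \gamma(W_0, g_\cc(m,n)) \, \gamma(W_1, g_\cc(m,n)).
\end{align*}
The factor over $W_0$ will collapse, after a congruence-class decomposition, to $\sigma_\cc(W_0)$ times the area of $D_\cc$, while the factor over $W_1$ must be expanded in a form suitable for summation. At each prime $p \mid W_1$ we have $p > M_{d,K}$ and $p \nmid h_\cc$, so $p$ is unramified in $K$ and $g_\cc$ is primitive mod $p$. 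A standard Hensel-type computation relates the local density to $\tilde{r}_K$:
\begin{align*}
\gamma(p^{k(x)}, n) = \alpha_p \sum_{j \geq 0,\, p^j \mid n} b(p^j),
\end{align*}
using the definitions of $\alpha_p$, $\beta_{p^{k(x)}}$ and $\tilde{r}_K$ together with the M\"obius inversion $\tilde{r}_K = 1 * b$. Multiplying over $p \mid P_\cc(w)$ and invoking multiplicativity of $b$ yields
\begin{align*}
\gamma(W_1, n) = c_\cc(w) \sum_{\substack{k \mid n \\ p \mid k \Rightarrow p \mid P_\cc(w)}} b(k).
\end{align*}

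Plugging this into the definition of $\tilde{N}_\cc(D_\cc)$ and swapping summations gives
\begin{align*}
\tilde{N}_\cc(D_\cc) = c_\cc(w) \sum_{\substack{k \\ p \mid k \Rightarrow p \mid P_\cc(w)}} b(k) \sum_{\substack{(m,n) \in D_\cc \cap \ZZ^2 \\ k \mid g_\cc(m,n)}} \gamma(W_0, g_\cc(m,n)).
\end{align*}
For each fixed $k$, I would decompose the inner sum according to the residue class of $(m,n)$ modulo $W_0 k$. The condition $k \mid g_\cc(m,n)$ selects $\lambda_{g_\cc}(k)$ classes mod $k$; on each, and for each $(s,t) \pmod{W_0}$, the value $\gamma(W_0, g_\cc(s,t))$ is constant. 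By a standard lattice-point estimate, the number of $(m,n) \in D_\cc$ in a given class mod $W_0 k$ is $|D_\cc|/(W_0 k)^2 + O(|\partial D_\cc|/(W_0 k) + 1)$. The main term then reassembles as $\sigma_\cc(W_0)\, \lambda_{g_\cc}(k)\, |D_\cc|/k^2$ via the identity $\sum_{(s,t) \pmod{W_0}} \gamma(W_0, g_\cc(s,t)) = W_0^2 \sigma_\cc(W_0)$, which follows directly from (\ref{def-sigma}).

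The remaining task is to truncate the $k$-sum and bound the errors. I would cut at $K_0 = \exp((\log x)^{1/4})$ and bound the tail $k > K_0$ using $|b(k)| \leq \tau(k)^{e+1}$ from (\ref{bound-g}), the estimate $\lambda_{g_\cc}(k) \ll (d+1) k^{2-1/d}$ from Lemma \ref{bound-lambda} (applicable since $g_\cc$ is separable with content $\leq (\log x)^C$), and Lemma \ref{poly-tau} to control the averaged divisor sums over values of $g_\cc$, producing a tail bound of order $O(x^2 \exp(-c(\log x)^{1/4}))$. The accumulated boundary errors for $k \leq K_0$ are of order $K_0^{O(1)} W_0^{O(1)} x$, absorbed into $O(x^2 \exp(-(\log x)^{1/8}))$ because $W_0 \leq (\log x)^{O(k(x))}$ remains sub-polynomial in $x$ (using $h_\cc \leq (\log x)^C$ and the choice of $k(x)$ in (\ref{def-kx})). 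The main obstacle is the joint tension between the thinness of $D_\cc$ (area $\gg x^2(\log H)^{-2A}$ from (\ref{bound-D}), perimeter $\ll x$), the slow but non-trivial growth of $W_0$, and the need to keep the $k$-tail negligible: these three scales must be simultaneously balanced so that the effective modulus $W_0 k$ remains much smaller than $x$ while the divisor-type sums over $k$ converge with a margin of $\exp(-(\log x)^{1/8})$; the admissibility hypothesis on $g_\cc$ is precisely what enables Lemma \ref{poly-tau} and Lemma \ref{bound-lambda} to deliver these bounds.
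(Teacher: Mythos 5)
Your high-level plan coincides with the paper's: factor $\gamma(W, \cdot) = \gamma(W_0,\cdot)\gamma(W_1,\cdot)$, identify $\gamma(W_1, n) = c_\cc(w)\tilde r_K(n_W) = c_\cc(w)\sum_{k\mid n,\; p\mid k\Rightarrow p\mid P_\cc(w)} b(k)$ by M\"obius inversion, swap summations, apply lattice-point counting in residue classes mod $W_0 k$, and reassemble $\sigma_\cc(W_0)$ from the sum over classes mod $W_0$. The real gap is in your treatment of the $k$-tail. Your cut $K_0 = \exp((\log x)^{1/4})$ is much too small for the methods you invoke to produce an error $O(x^2\exp(-c(\log x)^{1/4}))$. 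The paper truncates at $x^{1/(2d)}$, and this choice is essential: for $k$ supported on primes $p\le w = \exp(\sqrt{\log x})$ with exponents $v_p(k)\le k(x)\asymp\log\log x$ (as forced by $b(p^j)=0$ for $j>k(x)$), the inequality $k \le w^{k(x)\omega(k)}$ together with $k> x^{1/(2d)}$ forces $\omega(k)\gg \sqrt{\log x}/\log\log x$. That bound on $\omega(k)$ is what lets one absorb a factor $2^{-\omega(k)}$ against $\tau(|g_\cc(m,n)|)$ and then invoke Lemma~\ref{poly-tau} to win an exponential saving. With your $K_0$, the constraint $k>\exp((\log x)^{1/4})$ only gives $\omega(k)\ge 1$, and neither the $\omega(k)$ argument nor a Rankin-trick on $\sum_{k>K_0}\tau(k)^{e+1}\lambda_{g_\cc}(k)/k^2$ (which, restricted to $w$-smooth $k$, has partial Euler products of size growing faster than any power of $\log x$) yields the stated bound. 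The trade-off you describe is real, but it points in the opposite direction from what you wrote: the cut must be a small power of $x$, not a sub-polynomial quantity, and the lattice errors for $k\le x^{1/(2d)}$ are still acceptable because $1+3/(2d) < 2$ for $d\ge 2$.

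A second, related omission: the statement asks for the full series $\sum_{k\in\NN,\; p\mid k\Rightarrow p\mid P_\cc(w)} c_\cc(w)b(k)\lambda_{g_\cc}(k)/k^2$, so after truncating at $K_0$ you must separately show that the tail of that \emph{infinite} series is negligible. This is a different object from the actual sum $\sum_{k>K_0}b(k)\#\{(m,n)\in D_\cc : k\mid g_\cc(m,n)\}$. The paper handles it via the quantity $\Upsilon(x)=\sum_{k>x^{1/(2d)}}|b(k)|\lambda_{g_\cc}(k)/k^2\ll x^{-1/(4d)}$, using a squarefree/squarefull factorization $k=k_1k_2$, the bound $\lambda_{g_\cc}(k_1)\le\tau(k_1)^{d+1}$ on the squarefree part, and a careful estimate of $\sum_{p\mid k\Rightarrow p\le w,\; k>z}\tau(k)^B/k^2$. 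Your proposal does not address this extension at all, and the estimates it does cite (Lemma~\ref{bound-lambda}, Lemma~\ref{poly-tau}) do not on their own control $\Upsilon$.
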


\begin{proof}
	By Definition \ref{def-C-adm}, the content of $g_\cc$ is $\leq (\log x)^C$. Recall from  (\ref{def-W0}) the definition of $W_0$. It follows from \cite[Formula (9.4)]{BST} that
	\begin{align}\label{bound-W0}
		W_0 \leq \exp\big(C'(\log \log x)^3\big),
	\end{align}
	where $C'$ is a constant only depends on $d, K$ and $C$.
    According to \cite[Formula (9.5)]{BST}, we may write
	\begin{align}\label{BST-9.5}
		\tilde{N}_\cc(D_\cc) = \sum_{\nu_1, \nu_2 \, (\mod W_0)} \gamma(W_0, g_\cc(\nu_1, \nu_2))T(D_\cc; \nu_1, \nu_2),
	\end{align} 
	where 
	\begin{align}\label{def-T}
		T(D_\cc; \nu_1, \nu_2) = \sum_{\substack{(m,n) \in D_\cc \\ (m, n) \equiv (\nu_1, \nu_2) \, (\mod W_0)}} \gamma(W_1, g_\cc(m, n)).
	\end{align}
	A further application of \cite[Formula (9.6)]{BST} gives
	\begin{align}\label{BST-9.6}
		T(D_\cc; \nu_1, \nu_2) = U(D_\cc; \nu_1, \nu_2) \prod_{p | P_\cc(w)} \alpha_p,
 	\end{align}
 	where
 	$$U(D_\cc; \nu_1, \nu_2) = \sum_{\substack{(m,n) \in D_\cc \\ (m,n) \equiv (\nu_1, \nu_2) \, (\mod W_0)}} \tilde{r}_K(g_\cc(m,n)_W),$$
 	with the notation 
 	$$n_W = \prod_{\substack{p^\nu \mid\mid n \\ p \mid P(w)}} p^\nu.$$
 	
We know  $g_\cc(m, n) \neq 0$ for all $(m, n) \in D_\cc \cap \ZZ^2$. Write
$$\mathcal{C}_w = \{n \in \NN: p | n \, \Rightarrow \, p | P_\cc(w)\}.$$
Let $d' = g_\cc(m, n)_W$. Then there exists an integer $d''$ such that $g_\cc(m, n) = d'd''$, with $d' \in \mathcal{C}_w$ and $\gcd(d'', P_\cc(w)) = 1$. An application of M\"obius inversion then yields
$$U(D_\cc; \nu_1, \nu_2) = \sum_{k \in \mathcal{C}_w} b(k) \sum_{\substack{(m, n) \in D_\cc \\ (m, n) \equiv (\nu_1, \nu_2) \, (\mod W_0) \\ k | g_{\cc}(m, n)}} 1,$$
where $b = \tilde{r}_K * \mu$. Then we break the outer sum into two ranges, leading to
\begin{align}\label{BST-9.7}
	U(D_\cc; \nu_1, \nu_2) = \Sigma_1 + \Sigma_2,
\end{align}
where
$$\Sigma_1 = \sum_{\substack{k \in \mathcal{C}_w \\ k \leq x^{1/(2d)}}} b(k) \sum_{\substack{(m, n) \in D_\cc \\ (m, n) \equiv (\nu_1, \nu_2) \, (\mod W_0) \\ k | g_{\cc}(m, n)}} 1$$
and
$$\Sigma_2 = \sum_{\substack{k \in \mathcal{C}_w \\ k > x^{1/(2d)}}} b(k) \sum_{\substack{(m, n) \in D_\cc \\ (m, n) \equiv (\nu_1, \nu_2) \, (\mod W_0) \\ k | g_{\cc}(m, n)}} 1 .$$

For the contribution $\Sigma_2$, by (\ref{bound-g}), we have
$$\Sigma_2 \ll \sum_{\substack{k \in \mathcal{C}_w \\ k > x^{1/(2d)}}} \tau(k)^{e+1} \sum_{\substack{m, n \leq x \\ k|g_\cc(m,n)}}1.$$
Suppose that $\omega(k) = r$. Then any $k$ in the sum satisfies
$$x^{1/(2d)} < k = p_1^{j_1} \cdots p_r^{j_r} \leq w^{k(x)r},$$
hence $\omega(g_\cc(m, n)) \geq \omega(k) = r > K_0$, with
$$K_0 = \frac{\log x}{2k(x) d \log w} = \frac{\sqrt{\log x}}{2k(x)d}.$$
It then follows that $1 \leq 2^{-K_0}2^{\omega(g_\cc(m, n))} \leq 2^{-K_0} \tau(|g_\cc(m, n)|)$, so
$$\Sigma_2 \ll \frac{1}{2^{K_0}} \sum_{m,n\leq x} \tau(|g_\cc(m, n)|)^{e+2}.$$
Since $g_\cc$ has content $\leq (\log x)^C$, it now follows from Lemma \ref{poly-tau} that
\begin{align}\label{sum_2}
	\Sigma_2 \ll_A \frac{x(\log x)^{K_1}}{2^{K_0}} \ll x \exp\left(-(\log x)^{1/4} \right), 
\end{align} 
for a suitable constant $K_1$ only depending on $d, e$ and $A$.

For the contribution $\Sigma_1$, recall that the region $D_\cc$ compact and is the union of at most two compact regions, each formed by the difference of two convex regions.
% = D_\cc^{(1)} \setminus D_\cc^{(2)}$, where 
%$$D_\cc^{(1)} = \big\{(u, v) \in [-x, x]^2: 0 < r  < r_0 + \eta^2 x,  \norm{\theta - \theta_\cc} \leq (\log H)^{-2A}  \big\};$$
%$$D_\cc^{(2)} = \big\{(u, v) \in [-x, x]^2: 0 < r  < r_0 - \eta^2 x,  \norm{\theta - \theta_\cc} \leq (\log H)^{-2A}  \big\}$$
%are two convex regions.
By \cite[Lemma 1]{Sch}, we obtain
$$\Sigma_1 = \sum_{\substack{k \in \mathcal{C}_w \\ k \leq x^{1/(2d)}}} b(k) \lambda_{g_\cc}(k) \bigg( \frac{|D_\cc|}{W_0^2 k^2} + O(x) \bigg).$$
From (\ref{bound-g}), we know $|b(k)| = O_\eps(k^\eps)$. We use the trivial bound $\lambda_{g_\cc}(k) \leq k^2$ to obtain
    \begin{align}\label{BST-9.10}
    	\Sigma_1 = \frac{|D_\cc|}{W_0^2} \sum_{\substack{k \in \mathcal{C}_w \\ k \leq x^{1/(2d)}}} \frac{b(k) \lambda_{g_\cc}(k)}{k^2} + O_{A, \eps}(x^{1 + \frac{3}{2d} + \eps}).
    \end{align}
    Since $|D_\cc| \leq x^2$, we see that
    \begin{align*}
    	\frac{|D_\cc|}{W_0^2}  \sum_{\substack{k \in \mathcal{C}_w \\ k \leq x^{1/(2d)}}} \frac{b(k) \lambda_{g_\cc}(k)}{k^2} = \frac{|D_\cc|}{W_0^2}  \sum_{\substack{k \in \mathcal{C}_w}} \frac{b(k) \lambda_{g_\cc}(k)}{k^2} + O(x^2 \Upsilon(x)),
    \end{align*}
    where
    \begin{align}
    	\Upsilon(x) = \sum_{\substack{k \in \mathcal{C}_w \\ k > x^{1/(2d)}}} \frac{|b(k)| \lambda_{g_\cc}(k)}{k^2}.
    \end{align}
    
    We now claim that
    \begin{align}\label{bound-Gamma}
    	\Upsilon(x) \ll x^{-1/(4d)}.
    \end{align}
    To prove this, we write $k = k_1k_2$ for coprime $k_1, k_2$, where $k_1$ is square-free and $k_2$ is square-full. Moreover, we may assume that $v_p(k_2) \leq k(x)$ for any prime $p | k_2$, since otherwise $b(k) = 0$. Furthermore any $p | k_1k_2$ is coprime to the content of $g_\cc$, since $k \in \CCC_w$. Note that $\lambda_{g_\cc}(k) = \lambda_{g_\cc}(k_1) \lambda_{g_\cc}(k_2)$ and
    $$\lambda_{g_\cc}(k_1) \leq \prod_{p \| k_1 }(d+1) \leq (d+1)^{\omega(k_1)} \leq \tau(k_1)^{d+1},$$
    by Lemma \ref{bound-lambda}. Then, combining this with (\ref{bound-g}), we deduce that
    $$\Upsilon(x)\ll \sum_{\substack{k_2\in \mathcal{C}_w\\p^\nu\| k_2\Rightarrow 2\leq \nu\leq k(x)}} \frac{\tau(k_2)^{e+1}\lambda_{g_\cc}(k_2)}{k_2^2} \Sigma_{d+e+2}\left(\frac{x^{1/(2d)}}{k_2}\right),$$
where 
$$
\Sigma_B(z)=\sum_{\substack{p\mid k \Rightarrow p\leq w\\ k>z}} \frac{\tau(k)^{B}}{k^2} \ll_B \sum_{k > z} \frac{k^\epsilon}{k^2} \ll z^{-1+\eps}, $$
for any $\eps > 0$.
In particular, 
$$\Sigma_{d+e+2}\left(\frac{x^{1/(2d)}}{k_2}\right) \ll \frac{k_2}{x^{1/(2d)-\eps}} ,$$
since $\log w = \sqrt{\log x}$. 
Hence we obtain
\begin{align*}
	\Upsilon(x) & \ll \ \frac{k_2}{x^{1/(2d) - 2\eps}} 
 \sum_{\substack{k_2\in \mathcal{C}_w\\
p^\nu\| k_2\Rightarrow 2\leq \nu\leq k(x)}}
 \frac{\tau(k_2)^{e+1}
\lambda_{g_\cc}(k_2)}{k_2^2} \\
 & = \ \frac{1}{x^{1/(2d) - 2\eps}} 
 \sum_{\substack{k_2\in \mathcal{C}_w\\
p^\nu\| k_2\Rightarrow 2\leq \nu\leq k(x)}}
 \frac{\tau(k_2)^{e+1}
\lambda_{g_\cc}(k_2)}{k_2}.
\end{align*}
It follows from multiplicativity and Lemma \ref{bound-lambda} that
\begin{align*}
	\sum_{\substack{k_2\in \mathcal{C}_w\\
p^\nu\| k_2\Rightarrow 2\leq \nu\leq k(x)}}
 \frac{\tau(k_2)^{e+1}
\lambda_{g_\cc}(k_2)}{k_2} & \leq \prod_{p \leq w} \left( 1 + \sum_{2 \leq j \leq d} \frac{d(j+1)^{e+2}}{p} + \sum_{j \geq d+1} \frac{d(j+1)^{e+2}}{p^{j/d}} \right) \\
& = \prod_{p \leq w} \left( 1 + O \left( \frac{1}{p} \right) \right) \\
& \ll (\log x)^B.
\end{align*}
This proves (\ref{bound-Gamma}).
    
By (\ref{bound-Gamma}), it follows that
    \begin{align*}
    	\Sigma_1 = \frac{|D_\cc|}{W_0^2}  \sum_{\substack{k \in \mathcal{C}_w}} \frac{b(k) \lambda_{g_\cc}(k)}{k^2} + O_A\left(x^{2 - 1/(4d)}\right),
    \end{align*}
    in (\ref{BST-9.10}). Combining this with (\ref{sum_2}) in (\ref{BST-9.7}), we obtain
    \begin{align*}
    	U(D_\cc; \nu_1, \nu_2) = \frac{|D_\cc|}{W_0^2}  \sum_{\substack{k \in \NN \\ p | k \Rightarrow p | P_\cc(w)}} \frac{b(k) \lambda_{g_\cc}(k)}{k^2} + O_A\left(x^2 \exp (-(\log x)^{1/4})\right).
    \end{align*}
    We now insert this into (\ref{BST-9.6}) to obtain
    \begin{align}\label{prop-T}
    	T(D_\cc; \nu_1, \nu_2) = \frac{|D_\cc|}{W_0^2}  \sum_{\substack{k \in \NN \\ p | k \Rightarrow p | P_\cc(w)}} \frac{c_\cc(w)b(k) \lambda_{g_\cc}(k)}{k^2} + O_A\left(x^2 \exp (-(\log x)^{1/4})\right).
    \end{align}
    We further substitute this into (\ref{BST-9.5}), noting that $c_\cc(w) \leq W/\varphi(W) \ll \sqrt{\log x}$ and
    \begin{align*}
    	\sum_{\nu_1, \nu_2 \, (\mod W_0)} \frac{\gamma(W_0, g_\cc(\nu_1, \nu_2))}{W_0^2} = \sigma_\cc(W_0),
    \end{align*}
    in the notation of (\ref{def-sigma}). The main term therefore agrees with the statement of the lemma. By (\ref{bound-W0}), we see
    $$c_\cc(w) x^2 \exp\left(-(\log x)^{1/4}\right) \sum_{\nu_1, \nu_2 \, (\mod W_0)} \frac{\gamma(W_0, g_\cc(\nu_1, \nu_2))}{W_0^2} \ll x^2 \exp\left(-(\log x)^{1/8}\right).$$
    This completes the proof the lemma.
\end{proof}

\begin{lemma}\label{BST-L-9.4}
	Let $g \in \ZZ[s, t]$ be a separable binary form of degree $d$ and content $h$. Define
	$$P = \prod_{\substack{M_{d,K} < p \leq w \\ p \nmid h}} p.$$
	Then we have
	$$\sum_{\substack{k \in \mathbb{N} \\ p \mid k \Rightarrow p \mid P}} 
\frac{c_\cc(w) b(k) \lambda_{g}(k)}{k^2} \gg (\log x)^{-c_{d, e}}, $$
    where $c_{d, e} = e + d^2(d+1)^{e+2}$.
\end{lemma}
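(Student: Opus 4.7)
The plan is to factor the sum as an Euler product and bound each local factor from below. Both $b = \tilde{r}_K * \mu$ and the point-counting function $k \mapsto \lambda_g(k)$ are multiplicative (the latter by the Chinese Remainder Theorem, using that $g$ is separable), so
$$c_\cc(w) \sum_{\substack{k \in \NN \\ p \mid k \Rightarrow p \mid P}} \frac{b(k)\lambda_g(k)}{k^2} \;=\; \prod_{p \mid P} \alpha_p E_p,
\qquad E_p \;=\; \sum_{j \geq 0} \frac{b(p^j)\lambda_g(p^j)}{p^{2j}}.$$
It therefore suffices to lower-bound each $\alpha_p E_p$ and multiply.

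Write $R_j = \tilde{r}_K(p^j)$ (with the convention $R_{-1} = 0$) and $A_j = \lambda_g(p^j)/p^{2j}$. From Lemma \ref{bound-lambda} applied with $k = j$ and $\sigma = 0$ (legitimate because $p \mid P$ forces $p \nmid h_g$) one has $A_j \ll (j+1)/p^{j/d} \to 0$, so Abel summation, together with the fact that $b(p^j) = R_j - R_{j-1}$ vanishes for $j > k(x)$, gives
$$E_p \;=\; \sum_{j \geq 0} R_j (A_j - A_{j+1}) \;=\; \sum_{j=0}^{k(x)-1} r_K(p^j)(A_j - A_{j+1}) + \alpha_p^{-1}\beta_{p^{k(x)}} A_{k(x)}.$$
The crucial point is the monotonicity $A_{j+1} \leq A_j$: every $(u,v) \bmod p^{j+1}$ with $g(u,v) \equiv 0 \pmod{p^{j+1}}$ reduces to a solution modulo $p^j$, and each such solution admits at most $p^2$ lifts, whence $\lambda_g(p^{j+1}) \leq p^2 \lambda_g(p^j)$. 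Since $r_K(p^j)$, $\alpha_p^{-1}$ and $\beta_{p^{k(x)}}$ are all non-negative, every term on the right is non-negative, and in particular
$$E_p \;\geq\; A_0 - A_1 \;=\; 1 - \lambda_g(p)/p^2 \;\geq\; 1 - (d+1)/p,$$
where the last step uses Lemma \ref{bound-lambda} with $k = 1$.

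To finish, note that $\alpha_p = (1-1/p)^{-1}\prod_{\mathfrak{p} \mid p}(1 - 1/\mathrm{N}\mathfrak{p}) \geq 1 - e/p$, because at most $e$ prime ideals of $\mathcal{O}_K$ lie above $p$, each of norm at least $p$. Provided $M_{d,K}$ is chosen larger than $d+e+1$, for every $p \mid P$ one obtains $\alpha_p E_p \geq 1 - (d+e+1)/p > 1/2$, so taking logarithms and summing yields
$$\log \prod_{p \mid P} \alpha_p E_p \;\geq\; -2(d+e+1)\sum_{p \leq w} \frac{1}{p} + O(1) \;=\; -(d+e+1)\log\log x + O(1),$$
by Mertens' theorem and $\log w = \sqrt{\log x}$. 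Hence $\prod_{p \mid P} \alpha_p E_p \gg (\log x)^{-(d+e+1)}$, which is considerably stronger than the claimed bound $(\log x)^{-c_{d,e}}$ since $c_{d,e} = e + d^2(d+1)^{e+2} \gg d+e+1$. The main obstacle I anticipate is verifying the Abel-summation identity rigorously, especially justifying the passage to the limit at infinity using the truncation built into $\tilde{r}_K$ and the decay of $A_j$; once this is in place, the rest of the argument is entirely elementary.
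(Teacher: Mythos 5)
Your proof is correct, and it takes a genuinely different route from the paper's. Both arguments begin with the same Euler factorization $\prod_{p\mid P}\alpha_p E_p$, but the paper bounds each local factor $E_p$ from below crudely via the triangle inequality, using $|b(p^j)|\le\tau(p^j)^{e+1}$ and Lemma \ref{bound-lambda} to obtain $E_p\ge 1 - d^2(d+1)^{e+2}/p + O(p^{-1-1/d})$, which leads to the exponent $c_{d,e}=e+d^2(d+1)^{e+2}$. You instead perform Abel summation, $E_p=\sum_{j\ge 0}\tilde r_K(p^j)(A_j-A_{j+1})$ with $A_j=\lambda_g(p^j)/p^{2j}$, and observe that the trivial lifting bound $\lambda_g(p^{j+1})\le p^2\lambda_g(p^j)$ makes the sequence $A_j$ non-increasing, so every term of the rearranged sum is nonnegative. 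Dropping all terms but $j=0$ then gives $E_p\ge 1-\lambda_g(p)/p^2\ge 1-(d+1)/p$, a substantially cleaner estimate, and the rest is routine Mertens. This is a nice simplification: it exploits positivity of the summand rather than controlling its absolute value, and it yields the stronger bound $(\log x)^{-(d+e+1)}$, which of course implies the stated $(\log x)^{-c_{d,e}}$. The constant $c_{d,e}$ is propagated through Propositions \ref{prop-na} and \ref{lcf-rs}, but since it only needs to be some fixed polynomial-in-$d,e$ power of $\log x$, nothing downstream is affected; your sharper local bound would simply allow a smaller value of $c_{d,e}$ there. The verification of the boundary term in Abel summation is unproblematic: $\tilde r_K(p^j)$ is eventually constant (for $j\ge k(x)$), $A_j\to 0$, and in fact $b(p^j)=0$ for $j>k(x)$ makes the sum finite, so the identity $E_p=\sum_j R_j(A_j-A_{j+1})$ holds with no subtlety.
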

\begin{proof}
	By multiplicativity, we may write 
	$$\sum_{\substack{k \in \mathbb{N} \\ p \mid k \Rightarrow p \mid P}}  \frac{c_\cc(w) b(k) \lambda_{g}(k)}{k^2} = \prod_{p | P} \alpha_p \xi_p,$$
    where
    \begin{align*}
    	\xi_p & = 1 + \sum_{j \geq 1} \frac{b(p^j)\lambda_{g}(p^j)}{p^{2j}}.
    \end{align*}
Using the bound (\ref{bound-g}) and Lemma \ref{bound-lambda}, we obtain
\begin{align*}
    	\xi_p & \geq 1 - \sum_{j \geq 1} \frac{d (j+1) \tau(p^j)^{e+1} \min (p^{2j-1}, p^{j(2 - 1/d)})}{p^{2j}} \\
    	& = 1 - \sum_{j = 1}^d \frac{d(j+1)^{e+2}}{p} - \sum_{j \geq d+1} \frac{d(j+1)^{e+2}}{p^{j/d}} \\
    	& \geq 1 - \sum_{j = 1}^d \frac{d(j+1)^{e+2}}{p} - \sum_{j \geq d+1} \frac{d(j+1)^{e+2}}{p^{j/d}},
    \end{align*}
On the other hand, we also have 
$$\alpha_p \geq 1 - \frac{e-1}{p} + O \left( \frac{1}{p^2} \right),$$
by \cite[Formula (6.8)]{BST}.
Thus,
$$\alpha_p \xi_p \geq 1 - \frac{e + d^2(d+1)^{e+2}}{p} + O \left( \frac{1}{p^2} \right).$$
On assuming that $M_{d, K}$ is sufficiently large, the asserted lower bound follows from Mertens' theorem.

\end{proof}

\begin{proof}[Proof of Proposition \ref{prop-na}]
By the argument in the proof of \cite[Proposition 9.1]{BST}, the number of coefficient vectors $\cc$ for which $g_\cc$  is either fails to be separable or has content greater than $(\log x)^C$ is 
$\ll H^{d+1}(\log x)^{-dC}.$
Suppose $g_\cc(m, n) = 0$ for non-zero $(m, n) \in \ZZ^2$. Without loss of generality, we assume $n \neq 0$. Then the polynomial
$$f_\cc(x) = c_0 x^d + c_1 x^{d-1} + \dots + c_d$$
has a rational solution $x = m/n$. According to \cite{Kuba}, there are $O(H^d \log H)$ choices of $\cc \in S^{\mathrm{loc}}(H, \tilde{H})$ for which $f_\cc$ is reducible over $\QQ$. It follows that there are $O(H^{d}\log H)$ choices of $\cc \in S^{\mathrm{loc}}(H, \tilde{H})$ for which $f_\cc$ has a rational root, since these are the polynomials which admit a linear factor over $\QQ$. Therefore, we know 
$$\# \left\{ \cc \in S^{\mathrm{loc}}(H, \tilde{H}): g_\cc \text{ is not $C$-admissible} \right\} \ll \frac{H^{d+1}}{(\log x)^{dC}}.$$

On the other hand, when $g_\cc$ is $C$-admissible, we have $$\tilde{N}_\cc(D_\cc) \gg \sigma_\cc(W_0) \frac{x^2}{(\log x)^{c_{d,e}} (\log H)^{2A}} + O \left( x^2 \exp\left( - (\log x)^{1/8} \right) \right),$$
by (\ref{bound-D}), Lemma \ref{BST-L-9.3} and Lemma \ref{BST-L-9.4}. The statement of Proposition \ref{prop-na} now follows.
\end{proof}

\subsection{The non-archimedean densities are rarely small}
Recall the definition (\ref{def-W0}) of $W_0$.
In this section, we establish an upper bound for
\begin{align*}
	M(H, \tilde{H}; \Delta) = \#\{\cc \in S^{\mathrm{loc}}(H, \tilde{H}): \sigma_\cc(W_0) < \Delta \}.
\end{align*}
Let $U(W_0) \subset (\ZZ / W_0 \ZZ)^{d+1}$ be the image of the set $S^{\mathrm{loc}}(H, \tilde{H})$ under reduction modulo $W_0$. By (\ref{bound-W0}), we know $W_0 \leq \tilde{H} \leq H$. By partitioning according to the congruence class modulo $W_0$, we have
\begin{align*}
	M(H, \tilde{H}; \Delta) & \leq \sum_{\substack{\uu \in U(W_0) \\ \sigma_\uu(W_0) < \Delta}} \# \{ \cc \in S^{\mathrm{loc}}(H, \tilde{H}): |\cc| \leq H, \cc \equiv \uu \, (\mod W_0) \} \\
	& \ll \sum_{\substack{\uu \in U(W_0) \\ \sigma_\uu(W_0) < \Delta}} \frac{\tilde{H}}{W_0} \Big( \frac{H}{W_0} \Big)^d.
\end{align*}
It follows from Rankin's trick that
\begin{align*}
	M(H, \tilde{H}; \Delta) \ll \frac{\Delta^\kappa \tilde{H} H^d}{W_0^{d+1}} \sum_{\uu \in U(W_0)} \frac{1}{\sigma_\uu(W_0)^\kappa},
\end{align*}
for any $\kappa > 0$.
By multiplicativity, we may write
\begin{align}\label{final}
	M(H, \tilde{H}; \Delta) \ll \frac{\Delta^\kappa \tilde{H} H^d}{W_0^{d+1}} \prod_{p | W_0} \sum_{\uu \in U(p^{k(x)})} \frac{1}{\sigma_\uu(p^{k(x)})^\kappa},
\end{align}

The next result provides a good lower bound for $\sigma_\uu(p^k)$ whenever the equation $\NNN_K(\xx) = g_\uu(s,t)$ has a $p$-adic solution which is not too singular. It is the binary form analogue of \cite[Lemma 9.5]{BST} and the proof is similar.

\begin{lemma}\label{hensel}
	Let $p^k$ be a prime power and let $\uu \in (\ZZ / p^k \ZZ)^{d+1}$. Assume that there exists an integer $\alpha \geq 0$ and a solution $(\xx_0, s_0, t_0) \in \ZZ_p^{e+2}$ satisfying
	\begin{align}\label{eq:goat}
		\NNN_K(\xx_0) \equiv g_\uu(s_0, t_0) \, \mod p^k \text{ \indent  and \indent } p^\alpha \| (\nabla \NNN_K(\xx_0), \nabla g_\uu(s_0, t_0)).
	\end{align}
	Then $\sigma_\uu(p^k) \geq p^{-(\alpha+1)(e+1)}$.
\end{lemma}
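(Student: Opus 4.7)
Write $F(\xx, s, t) = \NNN_K(\xx) - g_\uu(s, t) \in \ZZ_p[x_1,\dots,x_e,s,t]$. By the hypothesis \eqref{eq:goat}, each of the $e+2$ partial derivatives of $F$ at $(\xx_0, s_0, t_0)$ has $p$-adic valuation at least $\alpha$, with equality for some distinguished index $i_0$; write $\partial_{i_0} F(\xx_0, s_0, t_0) = p^\alpha \beta$ with $p \nmid \beta$. We may assume $k > \alpha + 1$ throughout, for otherwise the single tuple $(\xx_0, s_0, t_0) \bmod p^k$ already gives $\sigma_\uu(p^k) \geq p^{-k(e+1)} \geq p^{-(\alpha+1)(e+1)}$.

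The strategy is a multivariate Hensel argument in the spirit of \cite[Lemma 9.5]{BST}: count perturbations of the shape $(\xx_0, s_0, t_0) + p^{\alpha+1}\mathbf{g}$ that satisfy $F \equiv 0 \pmod{p^k}$, where $\mathbf{g} = (g_1, \dots, g_{e+2})$ ranges over $(\ZZ/p^{k-\alpha-1}\ZZ)^{e+2}$ so that distinct representatives yield distinct tuples modulo $p^k$. Taylor expansion around $(\xx_0, s_0, t_0)$ gives
\[
F\big((\xx_0, s_0, t_0) + p^{\alpha+1}\mathbf{g}\big) = F(\xx_0, s_0, t_0) + p^{\alpha+1}\sum_{j=1}^{e+2} \partial_j F(\xx_0, s_0, t_0)\, g_j + p^{2\alpha+2} R(\mathbf{g}),
\]
with $R$ a polynomial in $\mathbf{g}$ having $p$-integral coefficients. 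Because every partial derivative $\partial_j F(\xx_0, s_0, t_0)$ has valuation at least $\alpha$, the middle sum is divisible by $p^{2\alpha+1}$; together with $F(\xx_0, s_0, t_0) \equiv 0 \pmod{p^k}$, the whole right-hand side is divisible by $p^{\min(k, 2\alpha+1)}$.

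If $\alpha + 1 < k \leq 2\alpha + 1$, the congruence $F \equiv 0 \pmod{p^k}$ then holds automatically for every choice of $\mathbf{g}$, so there are $p^{(k-\alpha-1)(e+2)}$ solutions modulo $p^k$; this is at least $p^{(k-\alpha-1)(e+1)}$, which upon dividing by $p^{k(e+1)}$ gives the desired bound. If $k > 2\alpha + 1$, I would fix the $e+1$ coordinates $g_j$ with $j \neq i_0$ freely and divide the above identity through by $p^{2\alpha+1}$; the congruence $F \equiv 0 \pmod{p^k}$ then reduces to a \emph{linear} congruence $\beta g_{i_0} \equiv c(\mathbf{g}) \pmod{p^{k-2\alpha-1}}$, and since $p \nmid \beta$ this has exactly $p^\alpha$ solutions for $g_{i_0}$ in $\ZZ/p^{k-\alpha-1}\ZZ$. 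The total count is $p^{(k-\alpha-1)(e+1)+\alpha}$, giving $\sigma_\uu(p^k) \geq p^{-(\alpha+1)(e+1)+\alpha} \geq p^{-(\alpha+1)(e+1)}$. The main obstacle is the divisibility check in the Taylor expansion: it is essential that $\alpha$ bounds the valuation of \emph{every} partial derivative of $F$ (and not merely of the distinguished one $\partial_{i_0}F$), so that the linear perturbation term is divisible by $p^{2\alpha+1}$ and can be cleared together with $F(\xx_0, s_0, t_0)$ and the quadratic remainder.
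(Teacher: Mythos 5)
Your proof is essentially correct and takes the natural multivariate Hensel lifting approach. The paper itself gives no proof but refers to the analogous statement \cite[Lemma~9.5]{BST}, and your argument is almost certainly the same one.

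There is one imprecision in the case $k > 2\alpha + 1$. After dividing the Taylor identity by $p^{2\alpha+1}$, the residual congruence you describe as the ``\emph{linear} congruence $\beta g_{i_0} \equiv c(\mathbf{g}) \pmod{p^{k-2\alpha-1}}$'' is not genuinely linear in $g_{i_0}$: the remainder term $pR(\mathbf{g})$ on the right-hand side still depends on $g_{i_0}$ through the quadratic and higher-order pieces. The correct justification of your count is that, with the other coordinates fixed, the function $\phi(g_{i_0}) = F(\xx_0 + p^{\alpha+1}\mathbf{g})/p^{2\alpha+1}$ is a polynomial in $g_{i_0}$ with $\phi'(g_{i_0}) = \beta + p\,(\cdots) \equiv \beta \pmod{p}$, a unit. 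Hensel lifting then gives a unique solution $g_{i_0} \pmod{p^{k-2\alpha-1}}$ for each choice of the remaining $e+1$ coordinates, hence $p^{\alpha}$ choices of $g_{i_0}$ in $\ZZ/p^{k-\alpha-1}\ZZ$, which is exactly the count you assert; so the conclusion and the bound $\sigma_\uu(p^k) \geq p^{-(\alpha+1)(e+1)}$ stand, but the step should be phrased via Hensel lifting rather than linearity. Your identification of the key point --- that $\alpha$ must bound the valuation of \emph{every} partial derivative so the linear perturbation term is divisible by $p^{2\alpha+1}$ --- is correct and is indeed what makes the division work.
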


We now state our final bound for $M(H, \tilde{H}; \Delta)$.

\begin{prop}\label{BST-P-9.6}
	We have 
	$$M(H, \tilde{H}; \Delta) \ll \Delta^{\frac{1}{d^2(e+1)}} \tilde{H} H^d.$$
\end{prop}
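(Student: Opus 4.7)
The plan is to combine Rankin's trick in (\ref{final}) with a uniform prime-by-prime bound. Enlarging the sum over $U(p^{k(x)})$ to the full residue class set and invoking multiplicativity of $\sigma_\uu$ in the modulus reduces the problem to proving
\[
\Sigma_p := \sum_{\uu \in (\ZZ/p^{k(x)}\ZZ)^{d+1}} \sigma_\uu(p^{k(x)})^{-\kappa}\ \ll\ p^{k(x)(d+1)+\kappa(e+1)}
\]
uniformly in $p$ for $\kappa = 1/(d^2(e+1))$. Granting this, the factor $\prod_{p\mid W_0} p^{k(x)(d+1)} = W_0^{d+1}$ cancels the denominator in (\ref{final}) exactly, and the residual $\prod_{p\mid W_0}p^{\kappa(e+1)} \leq W_0^{\kappa(e+1)/k(x)}$ is absorbed into the implied constant using $W_0\le\exp(O((\log\log x)^3))$ from (\ref{bound-W0}) together with $k(x)\gg\log\log x$. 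This delivers $M(H,\tilde H;\Delta)\ll\Delta^\kappa\tilde H H^d$, as required.

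To bound $\Sigma_p$, for each $\uu$ attach the nonnegative integer $\alpha_\uu$ equal to the minimum $p$-adic valuation of the stacked gradient $(\nabla\NNN_K(\xx_0),\nabla g_\uu(s_0,t_0))$ over all $p$-adic solutions of $\NNN_K(\xx)\equiv g_\uu(s,t)\pmod{p^{k(x)}}$. Lemma~\ref{hensel} gives $\sigma_\uu(p^{k(x)})^{-\kappa}\le p^{\kappa(\alpha_\uu+1)(e+1)}$, so the task reduces to establishing the level-set bound
\[
\#\bigl\{\uu\in(\ZZ/p^{k(x)}\ZZ)^{d+1}:\alpha_\uu\ge\alpha\bigr\}\ \ll\ p^{k(x)(d+1)-\alpha}.
\]
Splitting $\Sigma_p$ according to the value of $\alpha_\uu$ and summing the resulting geometric series with common ratio $p^{\kappa(e+1)-1}=p^{1/d^2-1}<1$ then produces the claimed bound on $\Sigma_p$ uniformly in $p\ge 2$.

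For the level-set bound, if $\alpha_\uu\ge\alpha$ then the minimizing solution yields $(s_0,t_0)\in\ZZ_p^2$ with $p^\alpha\mid\nabla g_\uu(s_0,t_0)$; since $\nabla g_\uu$ is homogeneous of degree $d-1$, we may restrict to $(s_0,t_0)\in\PP^1(\ZZ/p^\alpha\ZZ)$, contributing $O(p^\alpha)$ classes. For each such projective point and any prime $p>d$, the two linear conditions $\partial_s g_\uu(s_0,t_0)\equiv\partial_t g_\uu(s_0,t_0)\equiv 0\pmod{p^\alpha}$ on $(c_0,\dots,c_d)$ have rank $2$ modulo $p$ (witnessed, for instance, by the $2\times 2$ minor on coordinates $c_0,c_1$ in the affine patch $t_0=1$, whose determinant equals $-d$), and therefore cut out exactly $p^{\alpha(d-1)}$ residues modulo $p^\alpha$; lifting to modulus $p^{k(x)}$ yields $O(p^\alpha\cdot p^{\alpha(d-1)}\cdot p^{(k(x)-\alpha)(d+1)})=O(p^{k(x)(d+1)-\alpha})$, as desired. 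The main obstacle is handling the finite exceptional set of primes ($p\le d$, and primes dividing the content $h_\cc$ where $g_\uu$ degenerates modulo $p$): these are treated separately via the trivial bound $\sigma_\uu(p^{k(x)})\ge p^{-2k(x)(e+1)}$, with aggregate loss controlled by the smallness of $W_0$.
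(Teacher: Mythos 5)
Your overall architecture mirrors the paper's: start from \eqref{final}, stratify by the minimal gradient valuation $\alpha$ over $p$-adic solutions, apply Lemma~\ref{hensel}, and sum a geometric series with $\kappa=1/(d^2(e+1))$. The paper does exactly this. The difference is in the crucial stratum bound. The paper invokes the proof of \cite[Proposition~9.6]{BST} (adapted to binary forms) to obtain $\#U_\alpha(p^{k(x)})\ll p^{k(x)(d+1)-\alpha/(d(d-1))}$, which is weaker than your claimed level-set bound $\#\{\uu:\alpha_\uu\ge\alpha\}\ll p^{k(x)(d+1)-\alpha}$ but still makes the geometric series converge since $(e+1)\kappa=1/d^2<1/(d(d-1))$.

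Your proposed proof of the stronger bound has a genuine gap, and the gap explains exactly why the paper lands on the degraded exponent $\alpha/(d(d-1))$ rather than $\alpha$. You write ``since $\nabla g_\uu$ is homogeneous of degree $d-1$, we may restrict to $(s_0,t_0)\in\PP^1(\ZZ/p^\alpha\ZZ)$.'' This restriction implicitly assumes the minimizing solution has $(s_0,t_0)$ primitive modulo $p$. It need not be. If $p^\beta\,\|\,(s_0,t_0)$ with $\beta\ge 1$, then $\nabla g_\uu(s_0,t_0)=p^{\beta(d-1)}\nabla g_\uu(s_0',t_0')$ for the primitivized $(s_0',t_0')$, so $p^\alpha\mid\nabla g_\uu(s_0,t_0)$ only forces $p^{\alpha-\beta(d-1)}\mid\nabla g_\uu(s_0',t_0')$ --- a much weaker constraint on $\uu$. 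Pushing $\beta$ up to $\lfloor\alpha/(d-1)\rfloor$ can empty the constraint entirely, and the count over $\uu$ collapses to the trivial $p^{k(x)(d+1)}$; one also has to include $(s_0,t_0)=(0,0)$, where the gradient vanishes identically and no constraint on $\uu$ survives. This is precisely what produces the $1/(d(d-1))$ degradation; your stronger bound is not available, and your argument does not establish it. Separately, your determinant computation is off: in the patch $t_0=1$ the minor on $(c_0,c_1)$ has determinant $d\,s_0^{2(d-1)}$, which vanishes mod $p$ when $p\mid s_0$ (the correct witness for the projective point $[0:1]$ is the minor on $(c_{d-1},c_d)$, with determinant $d$). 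Finally, your catch-all treatment of the exceptional primes via $\sigma_\uu(p^{k(x)})\ge p^{-2k(x)(e+1)}$ does not visibly lose only an $O(1)$ factor: since $k(x)\asymp\log\log x$, the aggregate loss over the primes in $W_0$ (which includes primes dividing the content $h_\cc\le(\log x)^C$, of which there can be $\gg\log\log x$) is not trivially bounded, and you would need the finer analysis the paper relies on to control it.

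In short: the Rankin-trick/stratification framework is the same as the paper's, but the heart of the proof --- the bound on the strata $\#U_\alpha$ --- is where you diverge, and the self-contained bound you propose is both unproven as written and very likely false in the strength claimed; its failure modes (imprimitive and trivial $(s_0,t_0)$) are exactly what the paper's citation of \cite[Proposition~9.6]{BST} is handling.
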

\begin{proof}
	We stratify the set $U(p^{k(x)})$ according to the $p$-adic valuation $\alpha$ of the vector 
$$ (\nabla \NNN_K(\xx_0), \nabla g_\uu(s_0, t_0)).$$
For $0\leq \alpha\leq k(x)$, let $U_\alpha(p^{k(x)})$ be the set of $\uu \in (\ZZ/p^{k(x)}\ZZ)^{d+1}$ such that  $p\nmid \uu$, and for which there exists
$ (\xx_0,s_0, t_0)\in \ZZ_p^{e+1}$ such that \eqref{eq:goat} holds with $k=k(x)$.
Note that $U_{k(x)}(p^{k(x)})$ should actually be defined with the condition that 
$$p^{k(x)} | (\nabla \NNN_K(\xx_0), \nabla g_\uu(s_0, t_0))$$ in~\eqref{eq:goat}. Then we have
$$
U(p^{k(x)})=U_0(p^{k(x)})\sqcup U_1(p^{k(x)})\sqcup \cdots 
\sqcup
U_{{k(x)}}(p^{k(x)}).
$$
From Lemma \ref{hensel}, we know
\begin{align*}
\sum_{\uu\in U(p^{k(x)})} \frac{1}{\sigma_\uu(p^{k(x)})^\kappa}
\leq \sum_{0\leq \alpha\leq {k(x)}} 
p^{(\alpha+1)(e+1) \kappa} \cdot \#U_\alpha(p^{k(x)}).
\end{align*}

According to the proof of \cite[Proposition 9.6]{BST}, we know 
$$\#U_\alpha(p^{k(x)}) \ll p^{k(x) (d+1) - \frac{\alpha}{d(d-1)}}.$$
Thus, summing over $\alpha$ we deduce
$$\sum_{\uu \in U(p^{k(x)})} \frac{1}{\sigma_\uu(p^{k(x)})^\kappa} \ll \sum_{0 \leq \alpha \leq k(x)} p^{(\alpha+1)(e+1)\kappa} \cdot p^{k(x)(d+1) - \frac{\alpha}{d(d-1)}}.$$
If we choose
$$\kappa = \frac{1}{d^2(e+1)},$$
then $e \kappa < \frac{1}{d(d-1)}$. Note that $p|W_0 = O(1)$, we obtain
$$\sum_{\uu \in U(p^{k(x)})} \frac{1}{\sigma_\uu(p^{k(x)})^\kappa} \ll p^{k(x)(d+1)}.$$
Making this choice of $\kappa$, we return to (\ref{final}) and therefore arrive at the statement of the proposition.
\end{proof}

\begin{proof}[Proof of Proposition \ref{lcf-rs}]
	This follows from Propositions \ref{prop-na} and \ref{BST-P-9.6}.
\end{proof}

\bibliographystyle{amsplain}
\bibliography{Ref}

\providecommand{\bysame}{\leavevmode\hbox to3em{\hrulefill}\thinspace}
\providecommand{\MR}{\relax\ifhmode\unskip\space\fi MR }
% \MRhref is called by the amsart/book/proc definition of \MR.
\providecommand{\MRhref}[2]{%
  \href{http://www.ams.org/mathscinet-getitem?mr=#1}{#2}
}
\providecommand{\href}[2]{#2}
\begin{thebibliography}{10}

\bibitem{BBL}
M.~J. Bright, T.~D. Browning, and D.~Loughran, \emph{Failures of weak approximation in families}, Compos. Math. \textbf{152} (2016), no.~7, 1435--1475.

\bibitem{BHB}
T.~D. Browning and D.~R. Heath-Brown, \emph{Quadratic polynomials represented by norm forms}, Geom. Funct. Anal. \textbf{22} (2012), no.~5, 1124--1190.

\bibitem{B-HB1}
\bysame, \emph{Quadratic polynomials represented by norm forms}, Geom. Funct. Anal. \textbf{22} (2012), no.~5, 1124--1190.

\bibitem{BL}
T.~D. Browning and L.~Matthiesen, \emph{Norm forms for arbitrary number fields as products of linear polynomials}, Ann. Sci. \'Ec. Norm. Sup\'er. (4) \textbf{50} (2017), no.~6, 1383--1446.

\bibitem{BST}
T.~D. Browning, E.~Sofos, and J.~Ter{\"a}v{\"a}inen, \emph{Bateman-{H}orn, polynomial {C}howla and the {H}asse principle with probability 1}, arXiv:2212.10373 (2022).

\bibitem{Chowla}
S.~Chowla, \emph{The {R}iemann hypothesis and {H}ilbert's tenth problem}, Mathematics and its Applications, vol. Vol. 4, Gordon and Breach Science Publishers, New York-London-Paris, 1965.

\bibitem{CT1}
J.-L. Colliot-Th\'el\`ene, \emph{Points rationnels sur les fibrations}, Higher dimensional varieties and rational points ({B}udapest, 2001), Bolyai Soc. Math. Stud., vol.~12, Springer, Berlin, 2003, pp.~171--221.

\bibitem{CT-H-S}
J.-L. Colliot-Th\'el\`ene, D.~Harari, and A.~N. Skorobogatov, \emph{Valeurs d'un polyn\^ome \`a{} une variable repr\'esent\'ees par une norme}, Number theory and algebraic geometry, London Math. Soc. Lecture Note Ser., vol. 303, Cambridge Univ. Press, Cambridge, 2003, pp.~69--89.

\bibitem{CT-S}
J.-L. Colliot-Th\'el\`ene and P.~Salberger, \emph{Arithmetic on some singular cubic hypersurfaces}, Proc. London Math. Soc. (3) \textbf{58} (1989), no.~3, 519--549.

\bibitem{CT-D-SD-1}
J.-L. Colliot-Th\'el\`ene, J.-J. Sansuc, and P.~Swinnerton-Dyer, \emph{Intersections of two quadrics and {C}h\^atelet surfaces. {I}}, J. Reine Angew. Math. \textbf{373} (1987), 37--107.

\bibitem{CT-D-SD-2}
\bysame, \emph{Intersections of two quadrics and {C}h\^atelet surfaces. {II}}, J. Reine Angew. Math. \textbf{374} (1987), 72--168.

\bibitem{CT-Sko-SD}
J.-L. Colliot-Th\'el\`ene, A.~N. Skorobogatov, and P.~Swinnerton-Dyer, \emph{Rational points and zero-cycles on fibred varieties: {S}chinzel's hypothesis and {S}alberger's device}, J. Reine Angew. Math. \textbf{495} (1998), 1--28.

\bibitem{DSW}
U.~Derenthal, A.~Smeets, and D.~Wei, \emph{Universal torsors and values of quadratic polynomials represented by norms}, Math. Ann. \textbf{361} (2015), no.~3-4, 1021--1042.

\bibitem{D-S-1}
K.~Destagnol and E.~Sofos, \emph{Rational points and prime values of polynomials in moderately many variables}, Bull. Sci. Math. \textbf{156} (2019), 102794, 33.

\bibitem{D-S-2}
\bysame, \emph{Averages of arithmetic functions over polynomials in many variables}, arXiv:2409.18116 (2024).

\bibitem{Fri-Iwa-1}
J.~Friedlander and H.~Iwaniec, \emph{The polynomial {$X^2+Y^4$} captures its primes}, Ann. of Math. (2) \textbf{148} (1998), no.~3, 945--1040.

\bibitem{HB2}
D.~R. Heath-Brown, \emph{Primes represented by {$x^3+2y^3$}}, Acta Math. \textbf{186} (2001), no.~1, 1--84.

\bibitem{HB-Li-1}
D.~R. Heath-Brown and X.~Li, \emph{Prime values of {$a^2 + p^4$}}, Invent. Math. \textbf{208} (2017), no.~2, 441--499.

\bibitem{HB-Mo-1}
D.~R. Heath-Brown and B.~Z. Moroz, \emph{On the representation of primes by cubic polynomials in two variables}, Proc. London Math. Soc. (3) \textbf{88} (2004), no.~2, 289--312.

\bibitem{HB-Sko}
D.~R. Heath-Brown and A.~Skorobogatov, \emph{Rational solutions of certain equations involving norms}, Acta Math. \textbf{189} (2002), no.~2, 161--177.

\bibitem{Iwa-1}
H.~Iwaniec, \emph{Primes represented by quadratic polynomials in two variables}, Acta Arith. \textbf{24} (1973/74), 435--459.

\bibitem{Iwa-Kow}
H.~Iwaniec and E.~Kowalski, \emph{Analytic number theory}, American Mathematical Society Colloquium Publications, vol.~53, American Mathematical Society, Providence, RI, 2004.

\bibitem{Kuba}
G.~Kuba, \emph{On the distribution of reducible polynomials}, Math. Slovaca \textbf{59} (2009), no.~3, 349--356.

\bibitem{May}
J.~Maynard, \emph{Primes represented by incomplete norm forms}, Forum Math. Pi \textbf{8} (2020), e3, 128.

\bibitem{Sch-Sko}
D.~Schindler and A.~Skorobogatov, \emph{Norms as products of linear polynomials}, J. Lond. Math. Soc. (2) \textbf{89} (2014), no.~2, 559--580.

\bibitem{Sch}
W.~M. Schmidt, \emph{Northcott's theorem on heights. {II}. {T}he quadratic case}, Acta Arith. \textbf{70} (1995), no.~4, 343--375.

\bibitem{SS}
A.~N. Skorobogatov and E.~Sofos, \emph{Schinzel hypothesis on average and rational points}, Invent. Math. \textbf{231} (2023), no.~2, 673--739.

\bibitem{Jones}
M.~Swarbrick~Jones, \emph{A note on a theorem of {H}eath-{B}rown and {S}korobogatov}, Q. J. Math. \textbf{64} (2013), no.~4, 1239--1251.

\bibitem{Tao2016}
T.~Tao, \emph{The logarithmically averaged {C}howla and {E}lliott conjectures for two-point correlations}, Forum Math. Pi \textbf{4} (2016), e8, 36.

\bibitem{TT-ANT}
T.~Tao and J.~Ter\"av\"ainen, \emph{The structure of correlations of multiplicative functions at almost all scales, with applications to the {C}howla and {E}lliott conjectures}, Algebra Number Theory \textbf{13} (2019), no.~9, 2103--2150.

\bibitem{TT-Duke}
\bysame, \emph{The structure of logarithmically averaged correlations of multiplicative functions, with applications to the {C}howla and {E}lliott conjectures}, Duke Math. J. \textbf{168} (2019), no.~11, 1977--2027.

\bibitem{Joni}
J.~Ter\"av\"ainen, \emph{On the {L}iouville function at polynomial arguments}, Amer. J. Math. \textbf{146} (2024), no.~4, 1115--1167.

\bibitem{VA-V}
A.~V\'arilly-Alvarado and B.~Viray, \emph{Higher-dimensional analogs of {C}h\^atelet surfaces}, Bull. Lond. Math. Soc. \textbf{44} (2012), no.~1, 125--135.

\end{thebibliography}

\end{document}